\newtheorem{theorem}{Theorem}
\newtheorem{remark}{Remark}
\newtheorem{proposition}[theorem]{Proposition}
\newtheorem{lemma}{Lemma}
\newtheorem{assumption}{Assumption}
\theoremstyle{definition}
\DeclareMathOperator{\argmin}{argmin}
\DeclareMathOperator{\Tr}{Tr}
\newcommand{\bR}{{R}}
\newcommand{\Bb}{\mathbf{B}}
\newcommand{\RR}{\mathbb{R}}
\renewcommand{\SS}{\mathbb{S}}
\newcommand{\R}{\mathbb{R}}
\newcommand{\EE}{\mathbb{E}}
\newcommand{\ZZ}{\mathbb{Z}}
\newcommand{\E}{\mathbb{E}}
\newcommand{\PP}{\mathbb{P}}
\newcommand{\cH}{\mathcal{H}}
\newcommand{\cX}{\mathcal{X}}
\newcommand{\cN}{\mathcal{N}}
\newcommand{\cP}{\mathcal{P}}
\newcommand{\cF}{\mathcal{F}}
\newcommand{\SE}{\mathcal{S}}
\newcommand{\CR}{\mathcal{R}}
\newcommand{\bvarphi}{\bm{\varphi}}
\newcommand{\bx}{\bm{x}}
\newcommand{\bz}{\bm{z}}
\newcommand{\zb}{\bm{z}}
\newcommand{\bw}{\bm{w}}
\newcommand{\bv}{\bm{v}}
\newcommand{\bu}{\bm{u}}
\newcommand{\ba}{\bm{a}}
\newcommand{\bb}{\bm{b}}
\newcommand{\bp}{\bm{p}}
\newcommand{\bom}{\bm{\omega}}
\newcommand{\wb}{\bm{w}}
\newcommand{\xb}{\bm{x}}
\newcommand{\vb}{\mathbf v}
\newcommand{\Jb}{\mathbf J}
\newcommand{\bpx}{\bm{p}^{\bx}}
\newcommand{\bzx}{\bm{z}^{\bx}}
\newcommand{\rad}{\text{Rad}}
\newcommand{\be}{\begin{equation}}
\newcommand{\ee}{\end{equation}}
\title{Machine Learning from a Continuous Viewpoint I}
\author[1,2]{Weinan E \footnote{Also at Beijing Institute of Big Data Research.}
                   \thanks{\texttt{weinan@math.princeton.edu}}}
\author[2]{Chao Ma \thanks{\texttt{cham@princeton.edu}}}
\author[2]{Lei Wu \thanks{\texttt{leiwu@princeton.edu}}}
\affil[1]{Department of Mathematics, Princeton University}
\affil[2]{Program in Applied and Computational Mathematics, Princeton University}
\date{}
\begin{document}
\maketitle

\begin{abstract}

We present a continuous formulation of machine learning, 
as a problem in the calculus of variations and differential-integral equations,
 in the spirit of classical numerical analysis.
We demonstrate that conventional machine learning models and algorithms,
such as the random feature model, the two-layer neural network model
and the residual neural network model, can all be recovered (in a scaled form) as
particular discretizations of different continuous formulations.
We also present examples of new models, such as the flow-based random feature
model, and new algorithms, such as the smoothed particle method
and spectral method,
that arise naturally from this continuous formulation.
We discuss how the issues of generalization error and implicit
regularization can be studied under this framework.

\end{abstract}
{
  \hypersetup{linkcolor=black}
  \tableofcontents
}

\section{Introduction}

We present a continuous formulation of machine learning. 
As usual this continuous formulation consists of three components:
a representation of functions, a loss functional and a training dynamics.
For  representations of functions, we will discuss the integral transform-based models
and the more advanced flow-based models.
For the loss functional, we give examples that arise in supervised and unsupervised learning, as
well as examples from calculus of variations and partial differential equations (PDEs).
For training dynamics, we divide the unknown parameters into two classes:
conserved and non-conserved. For non-conserved parameters, we use what is known
in the physics literature as the model A dynamics \cite{hohenberg1977theory}, namely gradient flow in the
usual $L^2$ metric.
For conserved parameters, we use what is known as the model B dynamics \cite{hohenberg1977theory}, 
namely the gradient flow in the Wasserstein metric \cite{jordan1998variational}. 

In this framework, machine learning becomes a calculus of variations or PDE-like problem,
and different numerical algorithms can be used to discretize these continuous models. 
In particular, two-layer neural network \cite{cybenko1989approximation,barron1993universal}  and deep residual neural network (ResNet) models \cite{he2016deep,e2019barron}
can be recovered, in a scaled form, when the particle method is  applied to particular versions of the integral transform-based and flow-based models respectively. 
New machine learning models and algorithms can also be constructed using this continuous framework. 
As examples we will discuss a new flow-based random feature model, a new class of transform-based model, 
smoothed particle methods and spectral methods.

In addition to recovering existing machine learning models and constructing new ones, this continuous framework is also useful for the theoretical understanding of machine learning.  
We conjecture that the at least for standard supervised learning, the variational problems that arise from minimizing the population and empirical
risks are nice variational problems in this continuous formulation, though the precise meaning of this remains to be clarified.
Thus, the training models are some versions of the gradient flow of a reasonably nice functional. Hence it is not surprising that stable numerical discretizations of these continuous models perform well. 
In particular, this continuous viewpoint suggests that over-parametrized models should behave better since they 
 give rise to  more accurate discretizations of the continuous gradient flow. 
The behavior of the training algorithm should follow more closely the behavior of the continuous gradient flow. 
This viewpoint also suggests that one should expect trouble for very deep fully connected neural network models (which are 
not ResNets) since they do not have continuum limits. 
In fact, they suffer from numerical instabilities in the form of exploding gradients \cite{hochreiter2001gradient,hanin2018neural}.


This work builds upon previous work.  In particular, various components of this continuous framework have already appeared in the following set of works.
\begin{enumerate}
\item Continuous differential equation formulation of machine learning \cite{
weinan2017proposal, haber2017stable, lu2018beyond, li2017maximum, weinan2019mean,chen2018neural}.
\item The work on the integral representations of shallow neural networks 
\cite{cybenko1989approximation,barron1993universal,murata1996integral,candes1999harmonic,
bach2017breaking,sonoda2018global,leroux07a,sonoda2017neural}.
\item Mean field analysis of (stochastic) gradient descent for two-layer neural networks\cite{mei2018mean,rotskoff2018parameters,sirignano2019mean,chizat2018global}  and multi-layer fully connected networks \cite{araujo2019mean,nguyen2019mean,sirignano2019deep}.
\item The function space work \cite{e2019barron,e2018priori}.
\end{enumerate}
Also related are the work in \cite{rotskoff2019global,bartlett2018representing,avelin2019neural,thorpe2018deep,arbel2019maximum}. 
The work presented here is a natural extension of these ideas.
However, the present paper is the first that systematically explores the continuous viewpoint.

Philosophically the approach advocated here bears a lot of similarity to that of the PDE-type models in image processing, such
as the Mumford-Shah model or the Rudin-Osher-Fatemi model \cite{mumford1989optimal,rudin1992nonlinear}.
There the idea is to first formulate a continuous variational problem that presumably represents the ``first principle'' for the particular
image processing task such as image denoising, and then discretize that continuous problem to obtain a specific algorithm.
This is in contrast to the more traditional approach in image processing in which different types of filters
or algorithms are applied directly to the image, without the need to formulate the underlying mathematical problem first.
This latter approach resembles the current practice in machine learning in which different algorithms are applied
directly to the given dataset.

Despite its unprecedented successes across a wide spectrum of
applications, machine learning still remains to be unsatisfactory as a 
scientific discipline. The main problem is the lack of fundamental guiding principles
for designing machine learning models and algorithms and understanding their performance.
Many of the techniques used in practice are still quite ad hoc, and require heavy parameter tuning.
Often times the performance of these models is quite fragile and sensitive to the choice of the 
hyper-parameters.

The situation is reminiscent of what happened during the 1950's when finite difference
and finite element methods were just invented and used to solve 
PDEs. The performance of the algorithms
were found to be sensitive to the particular discretization schemes and
particular finite element meshes used.
Some seemingly reasonable schemes simply did not run, since they quickly led
to overflow on the computer.
Some schemes performed reasonably well on coarse grids but blowed up
upon refining grids. 

Efforts for building 
a theoretical foundation for these finite difference and finite element methods
did not go smoothly either. For example, 
to explain the overflow
phenomenon often encountered in practice, different stability concepts and criteria
were proposed \cite{forsythe1967finite, richtmyer1959difference}. 
Some were easy to use in practice, but were not robust under perturbations.
One such example was the concept of weak stability \cite{richtmyer1959difference}.
Some were more robust but difficult to use in practice.
It took a while for the numerical analysis community to finally
settle down on the right concepts and criteria \cite{gustafsson1995time}.
But after all the dusts were settled, what emerged was a solid and reasonably simple
picture about the basic concepts and principles behind the designing 
and understanding of these algorithms  \cite{gustafsson1995time, ciarlet2002finite}.

Our current work is very much motivated by the same objective, namely to develop
a reasonably simple and transparent framework for machine learning.
However, there is a key difference between machine learning and classical
numerical analysis: While classical numerical analysis is mainly concerned
with problems in low dimension, machine learning has to face problems in very
high dimensions. In fact, our interest is really on machine learning models
and algorithms that can overcome ``the curse of dimensionality''.
While the exact meaning of this terminology requires qualification, the dimensionality
issue is certainly among the most important considerations in developing
machine learning models and machine learning theory today.
In fact, one can roughly divide all machine learning models into two
categories:  The ones that do suffer from the curse of dimensionality
and the ones that do not.  We refer to \cite{e2019notice} for a discussion on this.

One important class of algorithms that do not suffer from the curse
of dimensionality problem are the Monte Carlo algorithms for numerical
integration. In this case, one can establish simple dimension-independent
error rates.  In contrast, grid-based numerical integration methods such
as  Simpson's rule do not share this property. Indeed, their performance
deteriorates rapidly as the dimensionality goes up.

This example has some important consequences on the formulation that we will present:
\begin{enumerate}
\item We will focus on ways of representing functions as expectations,
since there are algorithms for computing expectations with dimension-independent
error rates.
\item For the same reason, particle methods stand out for the training dynamics
since they are the analog of Monte Carlo methods for dynamic problems.
\end{enumerate}

\begin{remark}
There are important aspects of machine learning that cannot be easily formulated at the continuous level. One example is the stochastic gradient descent algorithm.
\end{remark}

\paragraph{Notations.} For any function $f: \RR^m\mapsto\RR^n$, let  $\nabla f = (\frac{\partial f_i}{\partial x_j})_{i,j}\in\RR^{n\times m}$ and $\nabla^T f = (\nabla f)^T$. We use $X\lesssim Y$ to mean $X\leq CY$ for some absolute constant $C$. For any $\bx\in\RR^d$, let $\tilde{\bx}=(\bx^T,1)^T\in\RR^{d+1}$. 
Let $\Omega$ be a subset of $\RR^d$, and denote by $\cP(\Omega)$ the space of probability measures. Define $\cP_2(\Omega)=\{\,\mu\in \cP(\Omega): \int \|\bx\|_2^2 d\mu(\bx)<\infty\}$.
We will also follow the convention in probability theory for denoting function dependence, e.g.
$\rho_t$ means the value of $\rho$ at time $t$.

\section{Representations of functions}

We are mainly interested in representations that are potentially effective in high dimensions.
Therefore we will focus on the ones that can be expressed as expectations.
As an example, instead of the Fourier representation:
\begin{equation}
\label{Fourier-1}
f(\bx) = \int_{\R^d} a(\bom) e^{i (\bom, \bx)} d \bom,
\end{equation}
we will consider
\begin{equation}
\label{Fourier-2}
f(\bx) = \int_{\R^d} a(\bom) e^{i (\bom\bx)}  \pi(d\bom) =
\E_{\bom \sim \pi} a(\bom) e^{i (\bom, \bx)}
\end{equation}
where $\pi$ is a probability measure on $\R^d$. The reason that we prefer \eqref{Fourier-2} over
\eqref{Fourier-1} is as follows. The discrete analog of \eqref{Fourier-1} is
\begin{equation}
\label{Fourier-1.1}
f_m(\bx) = \frac 1 m \sum_j a(\bom_j) e^{i (\bom_j, \bx)} 
\end{equation}
 where the sum is performed on a regular grid $\{\bom_j\}_{j=1}^m$ in the Fourier space. It is well-known that this kind of grid-based  approximations
satisfies
\begin{equation}
\label{Fourier-1.2}
f - f_m   \sim C(f) m^{-\alpha/d}
\end{equation}
where $C(f)$ and $\alpha$ are fixed quantities depending on $f$. The appearance of $1/d$ in the exponent of $m$ signals the curse of
dimensionality.  In contrast, for \eqref{Fourier-2},
by independently sample $\{\bom_j\}_{j=1}^m$ from $\pi$, we obtain an approximation to $f$ with a dimension-independent
error rate:
$$
\E|f(\bx) - \frac 1m \sum_{j=1}^m a(\bom_j) e^{i (\bom_j, \bx)}|^2 = 
\frac{\mbox{var}(f)}{m} 
$$
where
$$
\mbox{var}(f) = \E_{\bom \sim \pi} |a(\bom)|^2 - f(\bx)^2
$$

Equation \eqref{Fourier-2} can also be written as:
\begin{equation}
\label{Fourier-3}
f(\bx) = \int_{\R^d} a e^{i (\bom, \bx)}  \rho(da, d\bom) =
\EE_{(a, \bom) \sim \rho} a e^{i (\bom, \bx)}
\end{equation}
where $   \rho(da, d\bom)  = \delta(a - a(\bom))da \pi(d \bom) $.

From an algorithmic viewpoint, \eqref{Fourier-1} is typically associated with non-adaptive discretizations such as
the spectral method \cite{gottlieb1977numerical} or the ridglets and curvelets used in signal processing \cite{candes1999ridgelets,candes1999harmonic, murata1996integral}.
We will see later that the  forms \eqref{Fourier-2} and \eqref{Fourier-3} are closely 
associated with  the random feature model
and the two-layer neural network model. In fact one can write 
\[
\frac 1m \sum_{j=1}^m a(\bom_j) e^{i (\bom_j, \bx)} = \frac 1m \sum_{j=1}^m a_j \sigma (\bom_j, \bx)
\]
where $\sigma$ is defined by $\sigma (z) = e^{iz} $.  This is a two-layer neural network with activation function $\sigma$.

\subsection{Integral-transform based representation}\label{ssec:transform}

\paragraph*{Generalized ridgelet transforms}

The original ridgelet transform representation is as follows \cite{candes1999ridgelets, murata1996integral}
$$
f(\bx) = \int_{\R^{d+1}} a(\bw) \sigma(\bw^T \tilde{\bx}) d \bw
$$
where $\sigma$ is a nonlinear scalar function, the analog of the activation
function in neural networks. { Here we used $\tilde{\bx}=(\bx^T,1)^T\in\RR^{d+1}$ to include the bias term. 
To simplify the notation, in the rest of this paper we will write $\bx$ instead of $\tilde{\bx}$ when it is clear from the context that the bias term should be present.
}
Motivated by the discussions above, we define the generalized ridgelet transform by 
\be
\label{ridge-1}
f(\bx) = \int_{\R^{d+1}} a(\bw) \sigma(\bw^T \bx) \pi(d \bw)
= \E_{\bw \sim \pi} a(\bw) \sigma(\bw^T \bx)
\ee
This representation is better suited for high dimensional situations.  More generally, one can also use:
\be
\label{ridge-2}
f(\bx) = \int_{\R^{d+2}} a \sigma(\bw^T \bx) \rho(da, d \bw)
= \E_{(a, \bw) \sim \rho} a \sigma(\bw^T \bx)
\ee

\paragraph*{High co-dimensional representation}

Ridgelet transforms express function in terms of superpositions of ridge-like
structures which are co-dimension one objects.  One can also refine this 
representation, using structures of high co-dimension.  For example, the
following representation uses co-dimension 2 objects:
\begin{align}\label{eqn: co-ridge}
\nonumber f(\bx) &= \int_{\R^d \times \R^d} a(\bw_1, \bw_2) \sigma_1(\bw_1^T \bx) 
\sigma_2(\bw_2^T \bx)\pi(d \bw_1, d\bw_2)\\
&= \E_{\bw \sim \pi} a(\bw_1, \bw_2)\sigma_1(\bw_1^T \bx) \sigma_2(\bw_2^T\bx)
\end{align}
where $\bw=(\bw_1, 
\bw_2)$, $\sigma_1$ and $\sigma_2$ are two nonlinear scalar functions.

More generally, we can consider functions of the form
\be\label{eqn: transform-rep}
f(\bx) 
= \E_{\bw \sim \rho} [\varphi(\bx;\bw)],
\ee
where $\bw\in\Omega$ and $\rho\in\cP(\Omega)$.
Note that 
\eqref{ridge-1} and \eqref{eqn: co-ridge} are both special cases of the  representation
above.

\paragraph*{Compositional structure}

The representations discussed above correspond to neural network models with one hidden layer.
It  can be straightforwardly extended to include more hidden layers using a compositional structure.
An example with two hidden layers is given by:
\begin{equation}
f(\bx) = \int_{\R^{d_1}} a_1(\bw_1) \sigma(\bw_1^T \zb) \pi_1(d \bw_1)
= \E_{\omega_1 \sim \pi_1} a_1(\bw_1) \sigma(\bw_1^T \zb)
\end{equation}
\begin{equation}
\zb = \int_{\R^{d_2}} \ba_2(\bw_2) \sigma(\bw_2^T \tilde{\bx}) \pi_2(d \bw_2)
= \E_{\bw_2 \sim \pi_2} \ba_2(\bw_2) \sigma(\bw_2^T \tilde{\bx})
\end{equation}
where $d_2 = d+1$, $\pi_1, \pi_2$ are probability measures on $\R^{d_1}$ and $\R^{d_2}$ respectively,
$a_1: \R^{d_1} \rightarrow \R^1$, $\ba_2: \R^{d_2} \rightarrow \R^{d_1}$.

Another way to construct compositional structures is as follows:
\begin{equation}
f(\bx) = \int_{\R^{d_1}} a_1 \sigma(\bw_1^T \zb) \rho_1(da_1, d \bw_1)
= \E_{(a_1, \bw_1) \sim \rho_1} a_1 \sigma(\bw_1^T \zb)
\end{equation}
\begin{equation}
\zb = \int_{\R^{d_2}} a_2 \sigma(\bw_2^T \tilde{\bx}) \rho_2(d \bw_2)
= \E_{(a_2, \bw_2) \sim \rho_2} a_2 \sigma(\bw_2^T \tilde{\bx})
\end{equation}
{where $d_2 = d+1$}, $\rho_1, \rho_2$ are probability measures on $\R^1 \times \R^{d_1}$ and $\R^{d_1} \times \R^{d_2}$ respectively,

\subsection{Flow-based representation}

In the flow-based representation, the trial functions are generated by the
flow map of a (continuous) dynamical system:
$$ \frac{d \bz}{d\tau} = g(\tau, \bz),\, \bz(0) = \tilde{\bx} = (\bx^T, 1)^T
$$
The flow-map at time $1$ is defined as the map: $ \bx \rightarrow \bz(1) $.
More generally, one can allow a change of dimension between $\bx $ and $\bz$:
$$ \frac{d \bz}{d\tau} = g(\tau, \bz),\, \bz(0) = V \tilde{\bx}
$$
where $V \in \R^{D \times (d+1))}$ is a $D\times (d+1)$ matrix with rank $d+1$.
Scalar functions can be obtained by contracting this map with a vector:
$$ 
f(\bx) = \bm{\alpha}^T \bz(1), \, \bm{\alpha} \in \R^D
$$

The set of functions that can be generated this way depend on how we choose
$g$.  One natural way  is to use the representation discussed
above, e.g. 
$$
g(\tau, \bz) 
= \E_{\bw \sim \pi_\tau} \ba(\bw, \tau) \sigma(\bw^T \bz)
$$
Here $(\pi_\tau)_{\tau\in[0,1]}$ is a family of probability distributions parametrized by $\tau$.  This gives us the flow:
\be
\label{flow-2}
 \frac{d \bz}{d\tau} 
= \E_{\bw \sim \pi_\tau} \ba(\bw, \tau) \sigma(\bw^T \bz)
\ee
As before, we can also use the model:
\be
\label{flow-3}
 \frac{d \bz}{d\tau} =  \int_{\R^D} \ba \sigma(\bw^T \bz) \rho_\tau(d\ba, d \bw)
= \E_{(\ba, \bw) \sim \rho_\tau}  \ba  \sigma(\bw^T \bz)
\ee
where $(\rho_\tau)_{\tau\in[0,1]}$ is a family of probability distributions on $\R^D \times \R^D$.
More generally, we can consider the following model
\be \label{flow-4}
     \frac{d \bz}{d\tau} = \EE_{\bw\sim\rho_\tau}[\bvarphi(\bz;\bw)],
\ee 
where $\bvarphi(\bz;\bw)\in\RR^D$.

If we compare the models in \eqref{flow-2} and \eqref{flow-3} with the model proposed originally in \cite{weinan2017proposal}:
\be
\label{flow-5}
 \frac{d \bz}{d\tau} =   \bu(\tau) \sigma (\bw(\tau) ^T \bz) 
\ee
we see that \eqref{flow-5} is the special case of \eqref{flow-3} with $\rho_\tau = \delta(\ba-\bu(\tau))\delta(\bw-\bw(\tau))$.

However, as we learn from the work of \cite{e2019barron}, the more general representation in \eqref{flow-2} and \eqref{flow-3} is needed in order to capture the continuum limit of residual neural networks.

\section{The optimization problem}
The next step is to formulate the loss function that will be used in order
to turn the problem into an optimization problem. 
In the continuous setting, these optimization problems are calculus of variations problems.
Here we will discuss four examples of machine learning tasks.

{
In the following we will use $\theta$ to denote generically the set of parameters that occur in the representation.
For example, for \eqref{ridge-1}, we have $\theta = (a(\cdot), \pi(\cdot))$.
}

\subsection{Supervised learning}

In supervised learning, our objective is to find the best approximation of some target function $f^*$ that minimizes the so-called
population risk:
\be
\label{supervised-1}
\CR (\theta) = \int_{\R^d} (f(\bx;\theta) - f^*(\bx) )^2 \mu (d \bx)
\ee
Here $\mu$ is a probability distribution.  \eqref{supervised-1} is the $L^2$ loss function. Obviously one can also define
other loss functions by replacing the square function by some other convex functions with the global minimum at the origin.
The general form of loss function is given by:
\begin{align}
\label{supervised-1.1}
    \mathcal{R}(\theta) = \EE_{\bx}[\ell(f(\bx;\theta),f^*(\bx))].
\end{align}
Here $\ell$ is a convex function, $y$ is the (possibly noisy) label associated with $\bx$.

In reality,  we are only given partial information about $f^*$ and $\mu$ through a finite sample: 
 $S=\{(\xb_i, y_i)\}_{i=1}^n$ where $y_i=f^*(\bx_i)$ is the label for $\xb_i$.
Therefore in practice we have to work instead with
the ``empirical risk'':
\be
\label{supervised-2}
\hat{\CR}_n (\theta) = \frac 1n \sum_{i=1}^n  \ell(f(\bx_i; \theta),f^*(\bx_i)).
\ee

\subsection{Dimension reduction}

Dimension reduction is an important problem in unsupervised learning.
Here we are given a  dataset $\SE=\{\xb_i \}_{i=1}^n \in \R^d$  where $\{\xb_i \}$ are sampled from an underlying probability distribution $\mu$.
Our assumption is that $\mu$ is concentrated on a lower dimensional set in $\R^d$ and we would like to find a set of coordinates (functions of
$\xb$) that characterize that low dimensional set.  Let us assume  that the dimension of the low dimensional set is $D$ and is known to us.
Define two functions: an encoder $f: \R^d \rightarrow \R^{D}$ and a decoder $g: \R^{D} \rightarrow \R^{{d}}$. 
The encoder is a compression map and the decoder is a reconstruction map.
Our objective is to minimize the reconstruction error:
{
\be
\CR (\theta_1,\theta_2) = \int_{\R^d} (\bx - g(f(\bx; \theta_1); \theta_2))^2 \mu (d \bx)
\ee
}
This is the analog of the population risk. 
Again in practice, one has to work with the empirical risk, defined by:
{
\be
\hat{\CR}_n (\theta_1,\theta_2) = \frac 1 n \sum_{i=1}^n  (\bx_i - g(f(\bx_i; \theta_1);\theta_2))^2
\ee
}


\subsection{Calculus of variations}

A particularly important problem is the ground state of a quantum system \cite{carleo2017solving,han2019solving,Pfau2019AbInitioSO}. Let $\mathcal{H}$ be the  Hamiltonian operator of the
quantum system, say on $\R^d$.  The ground state  is the minimizer of the energy:
\be
\mathcal{I}(\varphi) = \frac {\int_{\R^d} \varphi^*(\bx) {\cH} \varphi(\bx) d \bx} {\int_{\R^d} |\varphi(\bx)|^2 d \bx}
\ee
where $\varphi^*$ is the complex conjugate of $\varphi$.
This energy can also be rewritten as
\be
\mathcal{I}(\varphi) = \E_{\bx \sim \mu_{\varphi}}  \frac {\varphi^*(\bx) {\cH}\varphi(\bx) }{|\varphi(\bx)|^2}
\ee
where $\mu_{\varphi}$ is the probability distribution defined by:
\be 
\mu_{\varphi}(d \bx) = \frac 1Z |\varphi(\bx)|^2 d \bx, \quad Z = \int_{\R^d} | \varphi(\bx)|^2 d \bx
\ee
$\mathcal{I}$ serves as the analog of the population risk.

In these problems, one typically attempts to compute $\mathcal{I}$ accurately by producing sufficient number of samples from the distribution $\mu_{\varphi}$.
This means that one attempts to work directly with the population risk in these problems.
In practice, however, there is an issue that the errors in the approximation of $\varphi$ may interact with the errors in the sampling.
This issue has not been systematically investigated yet.

\subsection{Nonlinear parabolic PDEs}

An important application of machine learning is the numerical solution of high dimensional PDEs
\cite{han2016deep, carleo2017solving, han2018solving, weinan2017deep, sirignano2018dgm, weinan2018deep, khoo2019solving, han2019solving, Pfau2019AbInitioSO}.
Formulating these PDEs as variational problems is an important step in formulating machine learning based algorithms.
In principle, one can always use the ``least square'' approach, as was done in \cite{carleo2017solving, sirignano2018dgm}.
But better performance can be achieved if more sophisticated formulations are used.

Consider the nonlinear parabolic PDE:
\begin{align}\label{eq:PDE}
          \frac{ \partial u}{ \partial t } ( t, x )
          + \frac{1}{2} \! \Tr\!\Big( \Sigma\Sigma^{\operatorname{T}}(t,x)(\mbox{Hess}_x &u) ( t, x ) \Big)
          +\nabla u( t, x )\cdot \mu( t, x ) \\
         & + h\big( t, x, u(t,x), \Sigma^{\operatorname{T}}( t, x ) \nabla u( t, x ) \big) = 0.
\end{align}
with the terminal condition $u(T,x) = g(x)$.  Among other things, this kinds of PDEs arise in option pricing with 
default risk or other nonlinear effects taken into account.

It can be shown that this PDE problem is equivalent to the following variational problem \cite{han2018solving, weinan2017deep}
\begin{align*}
&\inf_{Y_0,\{Z_t\}_{0\le t \le T}} \E |g(X_T) - Y_T|^2, \\
&s.t.\quad X_t = \xi + \int_{0}^{t}\mu(s,X_s)\, \,ds + \int_{0}^{t}\Sigma(s,X_s)\, dW_s, \\
&\hphantom{s.t.}\quad Y_t = Y_0 - \int_{0}^{t}h(s,X_s,Y_s,Z_s)\,  ds + \int_{0}^{t}(Z_s)^{\operatorname{T}}\, dW_s.
\end{align*}
The constraints are backward stochastic differential equations (BSDE) \cite{pardoux1992backward}.
This was the starting point of the ``Deep BSDE method'' proposed in \cite{han2018solving, weinan2017deep}.

Analyzing these variational problems is a major task in the mathematical theory of machine learning.

From now on we will focus on the supervised learning problem.

\section{Gradient flows}

The third component in machine learning is an algorithm for solving the optimization problem.
In this section, we will discuss various gradient flow
dynamics for the population or empirical risk. {For simplicity  we focus on the following loss functional
\be\label{eqn: loss-functional}
    \CR(\theta) = \EE_{\bx}[\ell(f(\bx;\theta),f^*(\bx))],
\ee 
where  $\ell(y_1,y_2)=(y_1-y_2)^2/2$.
}

\subsection{Conservative and non-conservative gradient flows}

We first discuss  gradient flows using a physics language \cite{hohenberg1977theory}.
The loss functions or functionals defined above serve as the ``free energy'' of the problem.

To begin with, we need to distinguish
conserved and non-conserved ``order parameters''. 
The coefficient $a$ in \eqref{ridge-1} is non-conserved.
The probability distributions $\pi$ or $\rho$ are obviously conserved. 

First, let us examine the situation with the representation \eqref{ridge-1}.
Let $I=I(a, \pi)$ be the loss functional. 
Denote by $\frac{\delta I}{\delta a}$ and $\frac{\delta I}{\delta \pi}$ the
formal variational derivative of $I$ with respect to $a$ and $\pi$ respectively,
under the standard $L^2$ metric.
The gradient flow for $a$ is simply given by 
\be\label{eqn: A-dynamics}
\frac{\partial a}{\partial t} = - \frac{\delta I}{\delta a}
\ee
In the physics literature, this is known as the ``model A'' dynamics \cite{hohenberg1977theory}.

The gradient flow for $\pi$ is given by a continuity equation:
\be \label{eqn: B-dynamics}
\frac{\partial \pi}{\partial t} + \nabla \cdot {\bf J} = 0
\ee
where the current $\Jb$ is given by:
$$
{\bf J}= \pi \vb, \, \vb= - \nabla V
$$
$$
V= \frac{\delta I}{\delta \pi}.
$$
This is known as the ``model B'' dynamics \cite{hohenberg1977theory} and $V$
is known as the ``chemical potential''.

\begin{remark}
It is well-known that the model B dynamics is also the
gradient flow under the 2-Wasserstein metric \cite{jordan1998variational,villani2008optimal}.
\end{remark}

{
For flow-based models, the parameters $a$ and $\pi$ are themselves
one-parameter families of coefficients or probability distributions respectively:
$a = (a_\tau)_{\tau \in [0, 1]}, \pi = (\pi_\tau)_{\tau \in [0, 1]}$.
Given a functional $I, I = I((a_\tau), (\pi_\tau))$,  
a natural extension of the gradient flow  to
$(a_\tau), (\pi_\tau)$ is given by:
$$
\frac{\partial a_\tau}{\partial t} = - \frac{\delta I}{\delta a_\tau}
$$
$$
\frac{\partial \pi_\tau}{\partial t} + \nabla \cdot {\bf J_\tau} = 0,
$$
where 
\[
{\mathbf J}_\tau= -\pi_\tau \nabla \frac{\delta I}{\delta \pi_\tau}.
\]
Note that the varitional derivatives $\delta I/\delta a_\tau$ and $\delta I/\delta \pi_\tau$ 
appeared above are not well-defined, since $I$ is the integral of the
influences of $\ba_\tau$ and $\pi_\tau$ from $\tau=0$ to $\tau=1$. So strictly speaking,
these derivatives are infinitesimal quantities. 
In section \ref{sec: flow-rand-feat} and \ref{sec: flow-neural-net}, we will provide 
rigorous forms of these equations.
}

\paragraph*{Example 1:  An example of the conserved parameter}
Consider representations of the form
$$f(\bx) = \int \varphi(\xb, \wb) \pi(d \wb).
$$
The {chemical potential} for this functional is given by
$$
V (\wb) = \frac{\delta \mathcal{R}}{\delta \pi}(\wb) = \EE_{\bx} [(f(\bx) - f^*(\bx)) \varphi(\bx, \wb)]
=\int K(\wb, \tilde{\wb}) \pi(d \tilde{\wb}) - \tilde{f}(\wb)
$$
where
\be\label{eqn: k-f-def}
\begin{aligned}
K(\wb, \tilde{\wb}) &= \EE_{\bx} [\varphi(\xb, \wb) \varphi(\bx, \tilde{\wb})]\\
\tilde{f}(\wb) &= \EE_{\bx}[f^*(\bx)\varphi(\bx, \wb)]
\end{aligned}
\ee

The model B gradient flow in this case is given by
$$
\partial_{t} \pi + \nabla (\pi \nabla V) = 0
$$

This is nothing but the ``mean field'' limit of the  gradient descent dynamics for two-layer neural networks 
\cite{mei2018mean,rotskoff2018parameters,sirignano2019mean,chizat2018global}.

\paragraph*{Example 2: An example of non-conserved parameter}

Consider now the representation
$$f(\bx) = \int a(\wb) \sigma(\wb^T {\xb}) \pi(d \wb)
$$
with  $\pi$ being fixed.
The variational derivative of \eqref{eqn: loss-functional} with respect to $L^2(\pi)$  is given by
$$
\frac{\delta \mathcal{R}}{\delta a}(\wb) = \EE_{\bx}[(f(\bx) - f^*(\bx)) \sigma(\wb^T {\bx})]
=\int K(\wb, \tilde{\wb}) \pi(d \tilde{\wb}) - \tilde{f}(\wb)
$$
where $K$ and $\tilde{f}$ are defined  as in \eqref{eqn: k-f-def}.
The gradient flow for $a$ is now given by:
$$
\partial_{t} a(\bw,t) = - \frac{\delta \mathcal{R}}{\delta a}(\wb)
$$

This is the continuous version of the gradient flow for random feature models.

{
\subsection{Pontryagin's maximum principle for flow-based models}
Consider a general flow-based model $f(\bz;\theta)=\bm{1}^T\bz_1^{\bx}$ with $\bz_1^{\bx}$ given by the following ODE,
\begin{equation}\label{eqn: flow-based}
    \frac{d\bz^{\bx}_\tau}{d\tau} = g(\bz^{\bx}_\tau;\theta_\tau),\qquad \bz^{\bx}_0= V\tilde{\bx}.
\end{equation}
Minimizing the risk subject to the dynamics defined by \eqref{eqn: flow-based} is a control problem
where  $\{\bz^{\bx}_\tau \}$ are the states and the parameters $\theta=\{\theta_\tau \}$ serve as the control.
Naturally we will borrow concepts from control theory.
To simplify the statement of the results, we define the following quantity: 
\begin{equation}\label{eqn: Hamiltonian-general}
        H(\bz,\bp,\theta) = \bp^Tg(\bz;\theta).
\end{equation}
Following the convention in control theory, we call $\bz,\bp, H$ the state, co-state and the Hamiltonian, respectively.

\textbf{The Pontryagin's maximum principle} (PMP).  This is a  necessary condition  for the optimal solutions of control problem \cite{boltyanskii1960pontryagin}. In the current case,  let $\theta$ be a global minimum of the risk functional. Then it must satisfy 
\begin{equation}
\theta_\tau = \text{argmin}_{\mu} \EE_{\bx}H(\bz_\tau^{\bx}, \bp_\tau^{\bx},\mu),
\end{equation}
where for each $\bx$, $(\bz^{\bx}_\tau, \bp^{\bx}_\tau)$ is given by  the Hamiltonian dynamics:
\begin{align}
\frac{d\bz^{\bx}_\tau}{d\tau} &=  \nabla_{\bp} H (\bz_\tau^{\bx}, \bp_\tau^{\bx}, \theta_\tau) = g(\bz_\tau^{\bx};\theta_\tau)\\
\frac{d\bp^{\bx}_\tau}{d\tau} &= - \nabla_{\bz} H(\bz_\tau^{\bx}, \bp_\tau^{\bx}, \theta_\tau) = - \nabla_{\bz} g(\bz^{\bx}_\tau, \theta_\tau) \bp_\tau^{\bx},
\end{align}
with the boundary condition $\bz^{\bx}_0 = V \tilde{\bx}, \bp^{\bx}_1 = \bm{1}\ell'(\bm{1}^T\bz_1^{\bx}, f^*(\bx))$.

Note that the dynamics of the state $\bz^{\bx}_\tau$ is forward in time from $\tau=0$ to $\tau=1$, whereas the dynamics of the co-state $\bp^{\bx}_\tau$ is backward in time from $\tau=1$ to $\tau=0$. We refer the reader to \cite{weinan2019mean} for the proof and more discussions.
}

\subsection{Flow-based random feature model}\label{sec: flow-rand-feat}

First a remark about notation. We will use $t$ to denote
 the time for the gradient flow, and  $\tau$ to denote the ``time'' used to define the flow-based models.
 
Consider the following  model 
\begin{equation}\label{eqn: compositional-random-feature}
\begin{aligned}
\bz_0^{\bx} &= V \tilde{\bx} \\
\frac{d\bz_{\tau}^{\bx} }{d\tau}&= \EE_{\bw\sim \pi_\tau}[\ba_\tau(\bw)\varphi(\bz_{\tau}^{\bx},\bw)],\\
f(\bx;\ba) &= \bm{1}^T \bz_1^{\bx},
\end{aligned}
\end{equation}
where { $V\in\RR^{D\times (d+1)}, \bm{1}  = (1, 1, \cdots, 1)^T$, 
$\varphi(\cdot,\cdot): \RR^D\times \Omega\mapsto \RR$ are the features. 
 $V$ is fixed and  $\mbox{rank}(V)=d+1$. 
}We will consider the case when $(\pi_\tau)_{\tau\in [0,1]}$ is pre-fixed. We call this the ``flow-based random feature model''.
 For $\ba$, we define its $L^2$ norm by 
\[
    \|\ba\|_{L^2}^2 = \int_0^1 \int \|\ba_\tau(\bw)\|_2^2 \pi_\tau(d \bw)d\tau.
\]
{
In this case, the Hamiltonian is given by 
\begin{equation}
H(\bz,\bp,\ba) = \EE_{\bw\sim\pi_\tau}[\bp^T\ba(\bw)\varphi(\bz,\bw)].
\end{equation}
}

\begin{proposition}
{ For the loss functional \eqref{eqn: loss-functional},} we have
\[
\frac{\delta \CR}{\delta \ba} = \EE_{\bx}[\bpx_\tau \varphi(\bzx_\tau,\bw)].
\]
where $\bz_{\tau}^{\bx}, \bp_{\tau}^{\bx}$ satisfies the following Hamiltonian dynamics,
\begin{equation}\label{eqn: forward-back-fbrf}
\begin{aligned}
\frac{d \bz_{\tau}^{\bx}}{d \tau} &= \nabla_{p} H = \EE_{\bw\sim\pi_\tau}[\ba_\tau(\bw)\varphi(z_{\tau}^{\bx}, \bw)] \\
\frac{d \bp_{\tau}^{\bx}}{d\tau} &= - \nabla_{\bz} H = - \EE_{\bw\sim\pi_{\tau}} [\ba_\tau(\bw)^T\nabla_{\bz} \varphi(\bz_\tau^{\bx},\bw)] \bp_{\tau}^{\bx},
\end{aligned}
\end{equation}
with the boundary conditions $\bz_{0}^{\bx} = V \tilde{\bx},  \bp_1^{\bx} = \bm{1} \ell'(\bm{1}^T\bzx_1, f^*(\bx))$.
\end{proposition}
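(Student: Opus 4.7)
The plan is to derive the variational derivative by the standard adjoint/sensitivity calculation used in control theory, which is the analytic engine behind Pontryagin's maximum principle. First I would perturb the control by writing $\ba_\tau(\bw) \to \ba_\tau(\bw) + \epsilon \, \delta\ba_\tau(\bw)$ and compute the first-order variation of the state $\bz^{\bx}_\tau$. Linearizing the state ODE \eqref{eqn: compositional-random-feature} around $\ba$ gives a tangent equation
\begin{equation*}
\frac{d}{d\tau} \delta \bz^{\bx}_\tau = \EE_{\bw\sim\pi_\tau}\!\bigl[\nabla_{\bz}\varphi(\bz^{\bx}_\tau,\bw)\,\ba_\tau(\bw)^T \,\delta\bz^{\bx}_\tau\bigr] + \EE_{\bw\sim\pi_\tau}\!\bigl[\delta\ba_\tau(\bw)\,\varphi(\bz^{\bx}_\tau,\bw)\bigr],\qquad \delta\bz^{\bx}_0 = 0,
\end{equation*}
where the first term is the Jacobian of $g$ in $\bz$ and the second is the inhomogeneous forcing coming from the control perturbation. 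Since $f(\bx;\ba)=\bm{1}^T\bz^{\bx}_1$, the chain rule gives
\begin{equation*}
\delta \CR = \EE_{\bx}\!\bigl[\ell'(\bm{1}^T\bz^{\bx}_1,f^*(\bx))\,\bm{1}^T\delta \bz^{\bx}_1\bigr].
\end{equation*}

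Next I would introduce the co-state $\bp^{\bx}_\tau$ solving the backward adjoint equation in \eqref{eqn: forward-back-fbrf} with the terminal condition $\bp^{\bx}_1 = \bm{1}\,\ell'(\bm{1}^T\bz^{\bx}_1,f^*(\bx))$, so that $\delta\CR = \EE_{\bx}[(\bp^{\bx}_1)^T \delta\bz^{\bx}_1]$. The key manipulation is to differentiate the pairing $(\bp^{\bx}_\tau)^T \delta\bz^{\bx}_\tau$ in $\tau$ and use both the tangent equation and the adjoint equation:
\begin{equation*}
\frac{d}{d\tau}\bigl[(\bp^{\bx}_\tau)^T \delta\bz^{\bx}_\tau\bigr] = \bigl(\dot\bp^{\bx}_\tau\bigr)^T\delta\bz^{\bx}_\tau + (\bp^{\bx}_\tau)^T \dot{\delta\bz^{\bx}_\tau}.
\end{equation*}
By construction of the adjoint dynamics, the two terms involving $\nabla_{\bz}\varphi$ cancel, leaving only the driving contribution
\begin{equation*}
\frac{d}{d\tau}\bigl[(\bp^{\bx}_\tau)^T \delta\bz^{\bx}_\tau\bigr] = \EE_{\bw\sim\pi_\tau}\!\bigl[(\bp^{\bx}_\tau)^T\,\delta\ba_\tau(\bw)\,\varphi(\bz^{\bx}_\tau,\bw)\bigr].
\end{equation*}
Integrating from $\tau=0$ to $\tau=1$ with $\delta\bz^{\bx}_0=0$ collapses the left-hand side to $(\bp^{\bx}_1)^T\delta\bz^{\bx}_1$.

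Finally I would combine these identities and take the expectation over $\bx$ to obtain
\begin{equation*}
\delta\CR = \int_0^1 \EE_{\bx}\EE_{\bw\sim\pi_\tau}\!\bigl[(\bp^{\bx}_\tau)^T\,\delta\ba_\tau(\bw)\,\varphi(\bz^{\bx}_\tau,\bw)\bigr]\,d\tau,
\end{equation*}
and read off the $L^2$ variational derivative with respect to $\ba$ as $\frac{\delta\CR}{\delta\ba}(\tau,\bw) = \EE_{\bx}[\bp^{\bx}_\tau\,\varphi(\bz^{\bx}_\tau,\bw)]$, which matches the claim (noting that $\bp^{\bx}_\tau$ and $\ba_\tau(\bw)$ are vector-valued and the pairing in $L^2$ uses the product $\pi_\tau(d\bw)\,d\tau$ as in the stated norm).

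The main obstacle is bookkeeping rather than a single hard step: one must correctly identify which slot in the Jacobian of $g$ interacts with $\ba_\tau(\bw)$, make sense of $\frac{\delta \CR}{\delta \ba}$ as a density against $\pi_\tau(d\bw)\,d\tau$ (the paper even flags that the naive variational derivatives in $\tau$ are only infinitesimal objects), and justify the interchange of expectations in $\bx$ and $\bw$ with the $\tau$-integration. Existence and differentiability of the flow in $\ba$, which licenses the linearization, follow from standard ODE arguments assuming mild regularity of $\varphi$ and boundedness of $\ba$ in $L^2$; none of these steps introduces a genuinely new difficulty beyond those already addressed in the mean-field analyses cited in the paper.
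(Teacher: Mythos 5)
Your overall strategy is sound and is essentially the same adjoint/sensitivity calculation the paper uses, but presented in a cleaner way. The paper linearizes the forward ODE for $\Delta_\tau = \tilde{\bz}_\tau - \bz_\tau$, then writes the solution explicitly using the propagator $e^{\int_\tau^1 Q_s\,ds}$ (this notation is a mild abuse — since the $Q_s$ need not commute, the object meant is really the fundamental solution of the linear ODE), and only afterwards defines $\bp_\tau = e^{\int_\tau^1 Q_s^T\,ds}\,\bm{1}\,\ell'(\cdots)$ and verifies the backward ODE. You instead introduce $\bp_\tau$ directly as the solution of the adjoint ODE and use the classical pairing trick $\frac{d}{d\tau}[(\bp_\tau)^T\delta\bz_\tau] = (\bp_\tau)^T(\text{forcing})$, integrating from $0$ to $1$. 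This avoids any explicit propagator and is arguably the cleaner derivation; the underlying bookkeeping is the same.

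There is, however, a concrete transpose error in your tangent equation. With $g(\bz) = \EE_{\bw\sim\pi_\tau}[\ba_\tau(\bw)\varphi(\bz,\bw)]$ one has
\begin{equation*}
g(\bz+\delta\bz)-g(\bz) = \EE_{\bw\sim\pi_\tau}\!\bigl[\ba_\tau(\bw)\,\nabla_{\bz}\varphi(\bz,\bw)^T\,\delta\bz\bigr] + o(\|\delta\bz\|),
\end{equation*}
so the coefficient matrix in the linearized ODE is $Q_\tau = \EE_{\bw}[\ba_\tau(\bw)\nabla_{\bz}^T\varphi(\bz_\tau^{\bx},\bw)]$ (vector $\ba$ times row $\nabla^T\varphi$), not $\EE_{\bw}[\nabla_{\bz}\varphi\,\ba_\tau(\bw)^T]$ as you wrote. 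The latter is $Q_\tau^T$. The cancellation in $\frac{d}{d\tau}[(\bp_\tau)^T\delta\bz_\tau]$ requires pairing the tangent ODE $\dot{\delta\bz}=Q_\tau\delta\bz+\text{forcing}$ with the adjoint $\dot{\bp}=-Q_\tau^T\bp$; with your transposed Jacobian the two quadratic forms $\bp^T Q_\tau\delta\bz$ and $\bp^T Q_\tau^T\delta\bz$ do not coincide, so the cancellation you assert would not actually occur. (Relatedly, the backward ODE as written in the statement has a typographical issue — $\ba_\tau(\bw)^T\nabla_{\bz}\varphi$ is a scalar; the paper's own proof replaces it with the correct $Q_\tau^T\bp_\tau^{\bx} = \EE_{\bw}[\nabla_{\bz}\varphi(\bz_\tau^{\bx},\bw)\ba_\tau(\bw)^T]\bp_\tau^{\bx}$.) Fixing the order in your tangent equation to $\EE_{\bw}[\ba_\tau(\bw)\nabla_{\bz}^T\varphi\,\delta\bz]$ makes the remainder of your argument go through exactly as described.
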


\begin{proof}
Let $\bz, \tilde{\bz}$ denote the original and  perturbed states generated by $\ba$  and $\ba+\varepsilon\tilde{\ba}$, respectively.  Then
we have
\begin{align}\label{eqn: xxxx}
 \nonumber   \CR(\ba+\varepsilon \tilde{\ba}) - \CR(\ba) &= \EE_{\bx}[\ell(\bm{1}^T\tilde{\bz}_1^{\bx},f^*(\bx))] - \EE_{\bx}[\ell(\bm{1}^T\bzx_1, f^*(\bx))]\\
    &=  \EE_{\bx}[\ell'(\bm{1}^T\bzx_1, f^*(\bx))\bm{1}^T(\tilde{\bz}_1^{\bx}-\bz_1^{\bx})] + o(\EE_{\bx}[\|\tilde{\bz}_1^{\bx}-\bzx_1\|])
\end{align}
We want to estimate $\tilde{\bz}_1^{\bx} - \bz_1^{\bx}$. We know that $\bz_0^{\bx}=\tilde{\bz}_0^{\bx}$ and 
\begin{align}
    \frac{d\bz_\tau^{\bx}}{d\tau} &= \EE_{\bw\sim\pi_\tau}[\ba_\tau(\bw)\varphi(\bz_\tau^{\bx},\bw)]\\
    \frac{d\tilde{\bz}_\tau^{\bx}}{d\tau} &= \EE_{\bw\sim\pi_\tau}[(\ba_\tau(\bw)+\varepsilon \tilde{\ba}_\tau(\bw))\varphi(\tilde{\bz}_\tau^{\bx},\bw)].
\end{align}
Let $\Delta_\tau^{\bx} = \tilde{\bz}_\tau^{\bx}  - \bz_\tau^{\bx}$, then $\Delta_0^{\bx}=0$ and 
\begin{align}
\frac{d\Delta_\tau^{\bx}}{d\tau} &=  \EE_{\bw\sim\pi_\tau}[\ba_\tau(\bw)(\varphi(\tilde{\bz}^{\bx}_\tau,\bw)-\varphi(\bz^{\bx}_\tau,\bw))] + 
                \varepsilon \EE_{\bw\sim \pi_\tau}[\tilde{\ba}_\tau(\bw)\varphi(\tilde{\bz}_\tau^{\bx},\bw)]\\
                &= \EE_{\bw\sim\pi_\tau}[\ba_\tau(\bw)\nabla_{\bz}^T \varphi(\bz^{\bx}_\tau,\bw)]\Delta_\tau^{\bx} + \varepsilon \EE_{\bw\sim\pi_\tau}[\tilde{\ba}_\tau(w)\varphi(\bz_\tau^{\bx},\bw)] + o(\varepsilon).
\end{align}
Let $Q_\tau = \EE_{\bw\sim\pi_\tau}[\ba_\tau(\bw)\nabla^T_{\bz} \varphi(\bz^x_\tau,\bw)]$,  we have 
\[
    \Delta_1^{\bx} = \varepsilon \int_0^1 e^{\int_\tau^1 Q_s ds} \EE_{\bw\sim\pi_\tau}[\tilde{\ba}(\bw)\varphi(\bz_\tau^{\bx},\bw)] d\tau + o(\varepsilon).
\]
Plugging this into Eqn. \eqref{eqn: xxxx} gives us 
\begin{align}
    \lim_{\varepsilon\to 0}\frac{\CR(\ba+\varepsilon \tilde{\ba})-  \CR(\ba)}{\varepsilon} & = \EE_{\bx}[\ell'(\bm{1}^T\bz_1^{\bx}, f^*(\bx))\langle \bm{1}, \int_0^1 e^{\int_\tau^1 Q_s ds} \EE_{\bw\sim\pi_\tau}[\tilde{\ba}(\bw)\varphi(\bz_\tau^{\bx},\bw)] d\tau \rangle ] \\
    &= \int_0^1 \EE_{\bw\sim\pi_\tau}\left\langle \EE_{\bx}[ e^{\int_\tau^1 Q^T_s ds}\ell'(\bm{1}^T\bz_1^{\bx},f^*(\bx))\bm{1} \varphi(\bz_\tau^x,\bw)],  \tilde{\ba}(\bw)\right\rangle   d\tau  \\
    &= \int_0^1 \EE_{\bw\sim\pi_\tau}\left\langle \EE_{\bx}[\bp^{\bx}_\tau \varphi(\bz_\tau^{\bx},\bw)],  \tilde{\ba}(\bw)\right\rangle   d\tau ,
\end{align}
where we defined the co-state 
\[
\bp_\tau^{\bx} = e^{\int_\tau^1 Q_s^T ds }\bm{1}\ell'(\bm{1}^T\bz_1^{\bx},f^*(\bx)).
\]
The variational derivative of the loss functional is given by 
\[
\frac{\delta \CR}{\delta \ba} = \EE_{\bx}[\bpx_\tau \varphi(\bzx_\tau,\bw)].
\]
Obviously, the co-state satisfies the following backward ODE,
\begin{align}
    \bp_1^{\bx} &= \ell'(\bm{1}^T\bz_1^{\bx},f^*(\bx)) \bm{1} \\
    \frac{d\bp^{\bx}_\tau}{d\tau} &= - Q_\tau^T \bp_\tau^{\bx} = -\EE_{\bw\sim\pi_\tau}[\nabla_{\bz}\varphi(\bzx_\tau,\bw)\ba_\tau^T(\bw)] \bpx_\tau.
\end{align}
Using the definition of $H$, it is easy to verify that $\bz_\tau^x$ and $\bpx_\tau$ satisfy the dynamic equations
stated above.
\end{proof}

{
\begin{proposition}
The gradient flow of the flow-based random feature model \eqref{eqn: compositional-random-feature}  is given by 
\begin{equation}\label{eqn: gradient-flow-fbrf}
    \partial_t \ba_{\tau}(\bw,t) = - \frac{\delta \CR}{\delta \ba} = - \EE_{\bx}[\varphi(\bz_{\tau}^{\bx}(t),\bw)\bp_{\tau}^{\bx}(t)],
\end{equation}
where $\bz^{\bx}(t)$ and $\bp^{\bx}(t)$ are the state and co-state at time $t$ generated by $\ba(\cdot, \cdot)$ through Eqn. \eqref{eqn: forward-back-fbrf}.
\end{proposition}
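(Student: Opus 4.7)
The statement is essentially an immediate corollary of the preceding proposition, so the plan is to assemble the pieces rather than do any new analysis. The key point is that the parameter $\ba = (\ba_\tau(\bw))_{\tau \in [0,1]}$ is non-conserved in the sense of Section 4.1, so the natural gradient flow is the model A dynamics \eqref{eqn: A-dynamics}, namely $\partial_t \ba = -\delta \CR / \delta \ba$, where the variational derivative is taken with respect to the $L^2$ inner product induced by the norm $\|\ba\|_{L^2}^2 = \int_0^1 \int \|\ba_\tau(\bw)\|_2^2 \, \pi_\tau(d\bw) \, d\tau$ defined earlier in this subsection.

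First, I would state this gradient flow equation as the definition/starting point, emphasizing that $\pi_\tau$ is held fixed in the flow-based random feature model so that only $\ba$ evolves. Second, I would invoke the previous proposition, which gives the closed form
\[
\frac{\delta \CR}{\delta \ba}(\tau, \bw) = \EE_{\bx}[\bpx_\tau \, \varphi(\bzx_\tau, \bw)],
\]
where $\bzx_\tau$ and $\bpx_\tau$ are obtained from the forward-backward Hamiltonian system \eqref{eqn: forward-back-fbrf} generated by the current value of $\ba$. Substituting this identity into the model A equation directly produces \eqref{eqn: gradient-flow-fbrf}.

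The only point that deserves a sentence of care is the coupling between the two time variables $t$ (training time) and $\tau$ (flow time): at every fixed training time $t$, the state $\bz^{\bx}(t)$ and co-state $\bp^{\bx}(t)$ are the solutions of the Hamiltonian system \eqref{eqn: forward-back-fbrf} using the parameter field $\ba(\cdot,\cdot,t)$ frozen at that instant. This makes \eqref{eqn: gradient-flow-fbrf} a genuinely nonlocal, nonlinear evolution equation for $\ba$, and I would note that the previous proposition is applicable at each such $t$ since it was derived under no assumption on $\ba$ beyond the regularity needed to run the variational calculation. I do not foresee a real obstacle here; the main thing to get right is simply the bookkeeping that the forward-backward system must be resolved anew at each training time $t$ along the gradient flow trajectory.
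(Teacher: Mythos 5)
Your proposal is correct and follows exactly the paper's (implicit) reasoning: the paper gives no separate proof for this proposition because it is precisely the combination of the model A dynamics \eqref{eqn: A-dynamics} for the non-conserved parameter $\ba$ (with $\pi_\tau$ frozen, using the $L^2$ norm defined in this subsection) together with the closed-form expression for $\delta\CR/\delta\ba$ established in the preceding proposition. Your remark about the bookkeeping between training time $t$ and flow time $\tau$ is also consistent with how the paper states and uses the result.
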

}
Note that  for each value of $\tau$, there is a gradient flow equation for $\ba_{\tau}$. The coupling between different values of $\tau$'s
is  through the Eqn. \eqref{eqn: forward-back-fbrf}.

\begin{assumption}\label{assumption: feature-fbrf}
    Assume that $\varphi=\varphi(\bz,\bw)$ is continuous with respect to $\bz,w$, and  there is a constant $C$ such that $\max\{|\varphi(\bz,\bw)|, \|\nabla_{\bz} \varphi(\bz,\bw)\|\}\leq C$. Moreover,  assume that  the family $\{\sum_{k=1}^m a_k \varphi(\bz,\bw_k)\}$
    has the universal approximation property, namely  any continuous function can be uniformly approximated by 
    functions of the form $\{\sum_{k=1}^m a_k \varphi(\bz,\bw_k)\}$.  
\end{assumption}
The above assumption holds for $\varphi(\bz,\bw) = \sigma(\bz\cdot\bb+c)$ with $\bw=(\bb,c)\in\SS^{D}$ (the unit sphere in $\R^D$)
 and $\sigma(t)=\tanh(t)$.

\begin{proposition}
Assume that $f^*(\bx)$ is continuous with respect to $\bx$, then under Assumption \ref{assumption: feature-fbrf}  any stationary point $\ba$ of $\CR$ that satisfies $\EE_{\bw\sim\pi_\tau}[\|\ba_\tau(\bw)\|]<\infty$ is also a global minimum.
\end{proposition}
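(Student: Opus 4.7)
The plan is to show that at any stationary point $\ba$, the residual $r(\bx) := f(\bx;\ba) - f^*(\bx)$ vanishes $\mu$-almost everywhere; since $\CR(\ba) = \tfrac{1}{2}\EE_{\bx}[r(\bx)^2]\ge 0$, this forces $\CR(\ba) = 0$, which is the global infimum.

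First I invoke the variational derivative formula from the preceding proposition: stationarity $\delta\CR/\delta\ba = 0$ in $L^2$ says that
$$\EE_{\bx}\bigl[\bp_\tau^{\bx}\,\varphi(\bz_\tau^{\bx},\bw)\bigr] = 0$$
for a.e.\ $\tau\in[0,1]$ and $\pi_\tau$-a.e.\ $\bw$. Under Assumption~\ref{assumption: feature-fbrf} the map $\bw\mapsto\EE_{\bx}[\bp_\tau^{\bx}\varphi(\bz_\tau^{\bx},\bw)]$ is continuous by boundedness of $\varphi$ and dominated convergence, so---taking $\pi_\tau$ to have full support on $\Omega$, as in the cited $\SS^D$ example---the identity extends to every $\bw\in\Omega$. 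Specializing to $\tau=1$, where the boundary condition gives $\bp_1^{\bx} = r(\bx)\bm{1}$, yields the scalar orthogonality
$$\EE_{\bx}\bigl[r(\bx)\,\varphi(\bz_1^{\bx},\bw)\bigr] = 0\qquad\forall\,\bw\in\Omega.$$

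Next I change variables via the terminal flow map $\Phi_1:\bx\mapsto\bz_1^{\bx}$. Assumption~\ref{assumption: feature-fbrf} together with $\EE_{\bw\sim\pi_\tau}[\|\ba_\tau(\bw)\|]<\infty$ makes the velocity field in \eqref{eqn: compositional-random-feature} uniformly bounded and Lipschitz in $\bz$, so by standard ODE theory $\Phi_1$ is a homeomorphism of $\RR^D$. Continuity of $f^*$ then makes $\tilde r(\bz) := r(\Phi_1^{-1}(\bz))$ continuous on $\Phi_1(\mathrm{supp}(\mu))$. Invoking the universal approximation clause of Assumption~\ref{assumption: feature-fbrf}, for every $\varepsilon>0$ I pick $\phi_\varepsilon(\bz) = \sum_k a_k\,\varphi(\bz,\bw_k)$ with $\|\tilde r - \phi_\varepsilon\|_\infty\le\varepsilon$ on $\Phi_1(\mathrm{supp}(\mu))$. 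Then
$$\EE_{\bx}[r(\bx)^2] = \sum_k a_k\,\EE_{\bx}\bigl[r(\bx)\varphi(\bz_1^{\bx},\bw_k)\bigr] + \EE_{\bx}\bigl[r(\bx)\bigl(\tilde r(\bz_1^{\bx}) - \phi_\varepsilon(\bz_1^{\bx})\bigr)\bigr],$$
where the first sum vanishes by the orthogonality of the previous step and the second is bounded by $\varepsilon\,\EE_{\bx}|r(\bx)|$ (finite by Cauchy--Schwarz and boundedness of $\varphi$). Sending $\varepsilon\to 0$ gives $\EE_{\bx}[r(\bx)^2]=0$, hence $\CR(\ba)=0$.

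The main obstacle is reconciling the universal approximation hypothesis (which allows arbitrary $\bw_k\in\Omega$ in the approximating sum) with the stationarity condition (which directly delivers orthogonality only for $\bw\in\mathrm{supp}(\pi_\tau)$). The clean route is the one above: assume $\pi_\tau$ has full support and extend by continuity in $\bw$. Without this, one would need to strengthen Assumption~\ref{assumption: feature-fbrf} to density of $\{\sum_k a_k\varphi(\cdot,\bw_k):\bw_k\in\mathrm{supp}(\pi_\tau)\}$ in $C(\Phi_1(\mathrm{supp}(\mu)))$. Secondary technicalities---invertibility of $\Phi_1$ and joint continuity of $(\tau,\bw)\mapsto\EE_{\bx}[\bp_\tau^{\bx}\varphi(\bz_\tau^{\bx},\bw)]$---are routine consequences of Assumption~\ref{assumption: feature-fbrf} and the integrability hypothesis on $\ba$.
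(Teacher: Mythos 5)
Your proof takes essentially the same route as the paper: stationarity gives orthogonality of $\bp_\tau^{\bx}$ to $\varphi(\bz_\tau^{\bx},\bw)$, the universal approximation clause of Assumption~\ref{assumption: feature-fbrf} upgrades this to orthogonality to any continuous $g(\bz_\tau^{\bx})$, and the non-degeneracy of the flow map (via Picard--Lindel\"of and $\mathrm{rank}(V)=d+1$) lets one replace $g\circ\Phi_1$ by an arbitrary continuous function of $\bx$, forcing $\bp_1^{\bx}=0$ a.s. The only genuine difference is that you flag and repair a measure-theoretic wrinkle the paper glides over --- that stationarity in $L^2(\pi_\tau)$ a priori gives the orthogonality only $\pi_\tau$-a.e.\ in $\bw$, so one must either interpret stationarity pointwise (as the paper implicitly does) or, as you do, combine continuity in $\bw$ with a full-support assumption on $\pi_\tau$ to extend it; this is a minor but legitimate sharpening rather than a new argument.
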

\begin{proof}
By definition, the following holds for any $\bw\in\Omega$
\[
    \frac{\delta \CR}{\delta \ba} = \EE_{\bx}[\varphi(\bz_\tau^{\bx},\bw)\bp_\tau^{\bx}]  = 0.
\]
Therefore, for any $\{\bw_k\}_{k=1}^m$ we have 
\[
\EE_{\bx}[\sum_{k=1}^m a_k\varphi(\bz_\tau^{\bx},\bw_k)\bp_\tau^{\bx}] = 0.
\]
From the universal approximation property, we obtain
\[
\EE_{\bx}[g(\bz^{\bx}_\tau)\bp_\tau^{\bx}] = 0
\]
 for any continuous function $g$.

Let $\bu(\bz,\tau) = \EE_{\bw\sim\pi_\tau}[\ba(\bw)\varphi(\bz,\bw)]$.  $\bz_1^{\bx}$  is then given by the flow map of the following ODE
\begin{equation}\label{eqn:ODE-uniqueness}
\begin{aligned}
    \bz_0 &= V{\bx}\\
    \frac{d\bz}{d\tau} &= \bu(\bz,\tau).
\end{aligned}
\end{equation}
The assumption implies that $\|\bu(\bz,\tau)\|\leq C$ and $\|\nabla_{\bz} \bu(\bz,\tau)\|\leq C$. By the Picard-Lindelof theorem, the solution of ODE
 \eqref{eqn:ODE-uniqueness} is unique. 
Since $\text{rank}(V)=d+1$,  the mapping $\bx \to \bz_\tau^{\bx}$ is non-degenerate. Therefore,  $g(\bz_{\tau}^{\bx})$ can represent any continuous function of $\bx$. Hence, the following holds for any continuous function $h$
\begin{align}
    \EE_{\bx}[h(\bx)\bp_1^{\bx}]=0,
\end{align}

Next, the stability of the forward ODE implies that $\bz_1^{\bx}$  is continuous with respect to $\bx$. Along with the assumption that $f^*(\bx)$ and $\ell'(\cdot,\cdot)$ are continuous,  we conclude that  $\bp_1^{\bx}=\bm{1}\ell'(\bm{1}^T\bz_1^{\bx},f^*(\bx))$ is continuous with respect to $\bx$. Taking $h(\bx)=\bp_1^{\bx}$ leads to 
\[
    \EE_{\bx}[\|\bp_1^{\bx}\|^2]=0.
\]
This implies that  $\bp_1^{\bx} = \bm{1} \ell'(f(\bx;\ba),f^*(\bx))=0$ almost surely, which implies that $\PP_{\bx}\{\ell'(f(\bx;\ba),f^*(\bx))=0\}=1$. Consequently,  $f(\bx;\ba)=f^*(\bx)$ almost surely.

\end{proof}

The proposition above is concerned with the stationary points of the loss functional.
We now turn to the stationary points of the gradient flow.

\begin{assumption}\label{assumption: flow-based-random-feature}
\begin{enumerate}
    \item Assume that $\Omega=\SS^D$ and $\pi_1$ is absolute continuous with respect to the Lebesgue measure on $\SS^D$. Moreover,  assume that $\pi_1$ has a continuous, positive density on $\bw\in\SS^D$. 
    \item For any $\tau_1,\tau_2\in [0,1]$, \[
    d_{TV}(\pi_{\tau_1}, \pi_{\tau_2}):=\inf_{\|f\|_{\infty}\leq 1} \EE_{\bw\sim\pi_{\tau_1}}[f(\bw)] - \EE_{\bw\sim\pi_{\tau_2}}[]f(\bw)]
    \rightarrow 0,
    \]
    as $\tau_2-\tau_1\to 0$.
\end{enumerate}
\end{assumption}

\begin{proposition}
The dissipation of the gradient flow \eqref{eqn: gradient-flow-fbrf} is given by 
\begin{align}\label{eqn: compositional-random-feature-dissipation}
    \frac{d\CR}{dt} &=  - \int_0^1 \EE_{\bw\sim \pi_\tau}\left\|\frac{\delta \CR}{\delta \ba}\right\|_2^2 d\tau \\
    &=- \int_0^1 \EE_{\bw\sim\pi_\tau}[\|\EE_{\bx}[\varphi(\bz_{\tau}^{\bx}(t),\bw))\bp_{\tau}^{\bx}(t)]\|_2^2] d\tau.
\end{align}
Moreover, under Assumption \ref{assumption: flow-based-random-feature}, let $\ba$ be a stationary point of the gradient flow, i.e. $d\CR/dt=0$ and assume that $\int_0^1 \EE_{\bw\sim\pi_\tau}\|\ba_\tau(\bw)\|_2^2 d \tau <\infty$, then we have
\[
    \frac{\delta \CR}{\delta \ba} =0.
\]
\end{proposition}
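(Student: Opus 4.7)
For the dissipation formula I would just do a direct chain-rule computation: differentiating $\CR(\ba(\cdot,t))$ in $t$ and pairing with $\partial_t \ba_\tau(\bw,t)$ in the natural $L^2(d\tau\otimes\pi_\tau(d\bw))$ inner product gives
\[
\frac{d\CR}{dt}=\int_0^1 \EE_{\bw\sim\pi_\tau}\!\left\langle \frac{\delta\CR}{\delta\ba},\partial_t\ba_\tau(\bw,t)\right\rangle d\tau,
\]
and then one substitutes the gradient flow \eqref{eqn: gradient-flow-fbrf} and the expression for $\delta\CR/\delta\ba$ from the previous proposition. This is the first displayed equation; the second is just a rewriting.

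For the second part, write $F(\tau,\bw):=\EE_{\bx}[\varphi(\bz^{\bx}_\tau,\bw)\bp^{\bx}_\tau]$. The stationarity $d\CR/dt=0$ combined with the dissipation formula forces $g(\tau):=\EE_{\bw\sim\pi_\tau}\|F(\tau,\bw)\|^2$ to vanish for Lebesgue-a.e.\ $\tau\in[0,1]$. My plan is to upgrade this to $F(\tau,\bw)=0$ for every $(\tau,\bw)$, and then to read the conclusion off as $\delta\CR/\delta\ba\equiv 0$.

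I would first argue that $F$ is jointly continuous and uniformly bounded on $[0,1]\times\SS^D$. Boundedness and continuity in $\bw$ come from Assumption \ref{assumption: feature-fbrf} on $\varphi$; continuity in $\tau$ follows because $\bz^{\bx}_\tau$ and $\bp^{\bx}_\tau$ solve the ODEs \eqref{eqn: forward-back-fbrf} with vector fields bounded uniformly in $\bx$ (using the $L^2$ hypothesis on $\ba$ and $\|\nabla_{\bz}\varphi\|\le C$), after which one commutes the $\bx$-expectation by dominated convergence. Combined with the total-variation continuity of $\tau\mapsto\pi_\tau$ from Assumption \ref{assumption: flow-based-random-feature}(2), this makes $g$ continuous on $[0,1]$, so the a.e.\ vanishing of $g$ promotes to $g\equiv 0$ on $[0,1]$. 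In particular $g(1)=0$, meaning $F(1,\cdot)=0$ holds $\pi_1$-a.e.; since $F(1,\cdot)$ is continuous on $\SS^D$ and $\pi_1$ has a positive continuous density there by Assumption \ref{assumption: flow-based-random-feature}(1), this gives $F(1,\bw)=0$ for every $\bw\in\SS^D$.

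From this terminal-time identity I can rerun the argument in the proof of the preceding proposition verbatim: the universal approximation hypothesis in Assumption \ref{assumption: feature-fbrf} yields $\EE_{\bx}[h(\bz_1^{\bx})\bp_1^{\bx}]=0$ for every continuous $h$; Picard--Lindel\"of uniqueness of \eqref{eqn:ODE-uniqueness} together with $\mathrm{rank}(V)=d+1$ makes $\bx\mapsto\bz_1^{\bx}$ a homeomorphism onto its image, so this is equivalent to $\EE_{\bx}[H(\bx)\bp_1^{\bx}]=0$ for every continuous $H$; taking $H=\bp_1^{(\cdot)}$ (continuous in $\bx$ by ODE stability) forces $\bp_1^{\bx}=0$ almost surely. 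Because the co-state equation in \eqref{eqn: forward-back-fbrf} is linear in $\bp_\tau^{\bx}$, the zero terminal value propagates backward, giving $\bp_\tau^{\bx}\equiv 0$ a.s.\ on $[0,1]$, and hence $F(\tau,\bw)\equiv 0$ on $[0,1]\times\Omega$, which is the claim.

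The main obstacle I anticipate is the continuity/regularity bookkeeping in the third paragraph: getting from the ``a.e.\ in $\tau$, $\pi_\tau$-a.e.\ in $\bw$'' output of the dissipation identity to the pointwise statement at $\tau=1$ requires combining the TV-regularity of $\pi_\tau$ with the joint continuity of $F$ just right, and this is precisely what Assumption \ref{assumption: flow-based-random-feature} is designed to support. Once that pointwise vanishing at $\tau=1$ is in hand, the rest is a direct application of the arguments already developed in the previous proposition.
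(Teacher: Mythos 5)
Your proposal is correct and takes a genuinely different route in the second half. The dissipation identity and the continuity-in-$\tau$ argument (showing $\bz_\tau^{\bx}$, $\bp_\tau^{\bx}$ are continuous in $\tau$ via the $L^2$ bound on $\ba$, then using the TV-continuity of $\pi_\tau$ to conclude $\tau\mapsto \EE_{\bw\sim\pi_\tau}g(\bw,\tau)$ is continuous, hence vanishes identically) match the paper. Where you diverge is the final step. The paper, after obtaining $\EE_{\bw\sim\pi_\tau}g(\bw,\tau)=0$ for all $\tau$, invokes the full support of $\pi_1$ together with continuity of $g(\cdot,\tau)$ to conclude $g(\bw,\tau)=0$ for every $\bw\in\SS^D$; as written this only cleanly delivers the pointwise conclusion at $\tau=1$, since Assumption~\ref{assumption: flow-based-random-feature}(1) imposes positive density only on $\pi_1$ and TV-continuity does not transfer full support to other $\tau$. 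You instead extract exactly $F(1,\cdot)\equiv 0$ from the $\pi_1$ hypothesis, then re-run the universal-approximation/Picard--Lindel\"of argument of the preceding proposition to deduce $\bp_1^{\bx}=0$ a.s., and finally use linearity of the backward co-state ODE to propagate $\bp_\tau^{\bx}\equiv 0$ for all $\tau$, giving $F\equiv 0$ on $[0,1]\times\Omega$. This both closes the support gap in the paper's last step and yields a strictly stronger conclusion (the stationary point is in fact a global minimizer, not merely a stationary point of $\CR$). The trade-off is that your route explicitly relies on Assumption~\ref{assumption: feature-fbrf} (universal approximation and the bounds on $\varphi$, $\nabla_{\bz}\varphi$) in addition to Assumption~\ref{assumption: flow-based-random-feature}; the paper's proof also uses those bounds implicitly in the continuity/boundedness estimates, so this is not a real loss of generality, but it is worth flagging that the hypothesis list in the proposition statement is incomplete either way.
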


\begin{proof}
Let $g(\bw,\tau) = \|\EE_{\bx}[\varphi(\bz_\tau^x,\bw)\bpx_\tau]\|_2^2$. Then any stationary point of the gradient flow satisfies
\begin{equation}\label{eqn: rand-feature-stationary-condition}
    \frac{d\CR}{dt} = - \int_0^1 \EE_{\bw\sim\pi_\tau}g(\bw,\tau) d\tau = 0
\end{equation}
We first show that $g(\bw,\tau)$ is continuous with respect to $\tau$.
Let $(\bz^x_\tau, \bp^{x}_\tau)$ be the solution of \eqref{eqn: forward-back-fbrf}. 
By definition, we have for any $\tau_1,\tau_2\in [0,1]$, 
\be
\begin{aligned}
    \|\bz_{\tau_2}^{\bx} - \bz_{\tau_1}^{\bx}\| &\leq  \int_{\tau_1}^{\tau_2}\EE_{\bw\sim\pi_s}[\|\ba_s(\bw)\|_2|\varphi(z_s^{\bx},\bw)|]ds\\
    &\leq C \int_{\tau_1}^{\tau_2}\sqrt{\EE_{\bw\sim\pi_s}\|\ba_s(\bw)\||^2_2} ds\\
    &\leq C \left((\tau_2-\tau_1)\int_{\tau_1}^{\tau_2} \EE_{\bw\sim\pi_s}\|\ba_s(\bw)\|_2^2ds\right)^{1/2}\\
    &\leq C \left((\tau_2-\tau_1)\int_{0}^{1} \EE_{\bw\sim\pi_s}\|\ba_s(\bw)\|_2^2ds\right)^{1/2},
\end{aligned}
\ee
where the second inequality follows from the fact that $|\varphi|\leq C$. Since $\int_0^1 \EE_{\bw\sim\pi_s}[\|a_s(\bw)\|_2^2]ds <\infty$, the above upper bound implies that $\bz_\tau^{\bx}$ is continuous with respect to $\tau$. Similarly, we can prove that $\bpx_\tau$ is also continuous with respect to $\tau$. 

By definition 
\be
\begin{aligned}
    g(\bw,\tau) &\leq C\EE_{\bx}\|\bp_\tau^{\bx}\|_2^2 \leq C \|e^{\int_{\tau}^1 Q_s^Tds}\|\\ &\leq C e^{\int_0^1 \|Q_s^T\|_2 ds}\leq Ce^{\int_0^1 \EE_{\bw\sim\pi_s}\|a_s(\bw)\|ds}\leq C.
\end{aligned}
\ee
This implies that $g(\bw,\cdot)$ is uniformly bounded. Therefore, we have
\be
\begin{aligned}
    |\EE_{\bw\sim\pi_{\tau_2}}&g(\bw,\tau_2) - \EE_{\bw\sim\pi_{\tau_1}}g(\bw,\tau_1)| \\
    &\leq  |\EE_{\bw\sim\pi_{\tau_2}}g(\bw,\tau_2) - \EE_{\bw\sim\pi_{\tau_1}}g(\bw,\tau_2)| + |\EE_{\bw\sim\pi_{\tau_1}}g(\bw,\tau_2) - \EE_{\bw\sim\pi_{\tau_1}}[g(\bw,\tau_1)]|\\
    &\leq C d_{TV}(\pi_{\tau_2}, \pi_{\tau_1}) + \max_{w}|g(\bw,\tau_2)-g(\bw,\tau_1)| \\
    &= o(1)
\end{aligned}
\ee
as $\tau_2-\tau_1\to 0$.
The above inequality implies that $\EE_{\bw\sim\pi_\tau}g(\bw,\tau)$ is continuous with respect to $\tau$. Using the fact that $g(\bw,\tau)\geq 0$ and the stationarity condition \eqref{eqn: rand-feature-stationary-condition}, we  conclude that 
\[
    \EE_{\bw\sim\pi_\tau}g(\bw,\tau) = 0.
\]
Since $g(\bw,\tau)$ is continuous with respect to $\bw$ and  $\pi_1$ has full support,   we  have for all $\bw\in\SS^D$. 
\[
    \frac{\delta \CR}{\delta \ba}(\bw,\tau) = g(\bw,\tau)=0.
\]
\end{proof}

\subsection{Gradient flow for the flow-based neural networks}\label{sec: flow-neural-net}
Consider the following flow-based model 
\be\label{eqn: resnet-model}
\begin{aligned}
\bz_0^{\bx} &= V \tilde{\bx} \\
\frac{d\bz_\tau^{\bx}}{d\tau} &= \EE_{\bw\sim\pi_\tau}[\bm{\varphi}(\bz,\bw)],\quad  \forall \tau\in [0,1]\\
f(\bx;\pi) &:= \bm{1}^T\bz_1^{\bx},
\end{aligned}
\ee
where  $\bw\in\Omega$ and $\bvarphi: \RR^D\times \Omega \mapsto \RR^D$.
Here  the parameters are $\pi=(\pi_\tau)_{\tau\in [0,1]}$, a one-parameter family of probability measures. 

To derive the gradient flow for  \eqref{eqn: loss-functional}, we first need to define the parameter space appropriately.  Denote by $X := \{ \pi : [0,1]\mapsto \cP_2(\Omega)\}$, the space of all  feasible parameters. For any $\pi^1,\pi^2\in X$, consider the following metric:
\[
    d^2(\pi^1,\pi^2): = \int_0^1 W_2^2(\pi_\tau^1,\pi^2_\tau) d\tau,
\]
where $W_2(\cdot,\cdot)$ is the 2-Wasserstein distance.  { In this case, the Hamiltonian is given by 
\[
    H(\bz,\bp,\mu) := \EE_{\bw\sim\mu}[\bp^T\bvarphi(\bz,\bw)].
\]
}

\begin{proposition}\label{pro: gradient-flow-resnet}
The gradient flow in the metric space $(X,d)$ for the objective function \eqref{eqn: loss-functional} is given by
{
\begin{align}\label{eqn: flow-resnet}
\partial_{t} \pi_\tau(\bw,t) = \nabla_{\bw} \cdot\big(\pi_\tau \nabla_{\bw} V_\tau(\bw;\pi)]\big), \,\, \forall \tau \in [0,1],
\end{align}
where 
\[
V_\tau(\bw;\pi) = \EE_{\bx}[\frac{\delta H}{\delta\mu}(\bz^{\bx}_\tau(t),\bp^{\bx}_\tau(t), \pi_\tau(\cdot,t))] = \EE_{\bx}[(\bp^{\bx}_\tau(t))^T\bvarphi(\bz^{\bx}_\tau(t),\bw)]
\]
}
and for each $\bx$, $(\bz_\tau^{\bx}(t), \bp_\tau^{\bx}(t))$ satisfies 
\begin{equation}\label{eqn: forward-back}
\begin{aligned}
\frac{d\bz_\tau^{\bx}(t)}{d\tau} &= \nabla_{\bp} H = \EE_{\bw\sim\pi_\tau}[\bvarphi(\bz_\tau^{\bx}(t),\bw)] \\
\frac{d\bp_\tau^{\bx}(t)}{d\tau} &= - \nabla_{\bz} H =- \EE_{\bw\sim\pi_\tau} [\nabla_{\bz} \bvarphi(\bz_\tau^{\bx}(t),\bw) \bp_\tau^{\bx}(t)].
\end{aligned}
\end{equation}
with the boundary conditions 
$
    \bz_0^{\bx}(t) = V \tilde{\bx},
    \bp_1^{\bx}(t) = \bm{1} \ell'(f(\bx;\pi(\cdot,t)), f^*(\bx))
$.
\end{proposition}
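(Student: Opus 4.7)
The plan is to follow the same pattern as the preceding proof for the flow-based random feature model, but with the conserved parameter $\pi=(\pi_\tau)_{\tau\in[0,1]}$ playing the role of $\ba$. The two main steps are: (i) compute the variational derivative $\delta\CR/\delta\pi_\tau$ by an adjoint sensitivity analysis of the flow \eqref{eqn: resnet-model}, and (ii) translate this derivative into a Wasserstein gradient flow on $(X,d)$, which, because the metric decouples slicewise in $\tau$, yields a continuity equation in $\bw$ at each $\tau$.

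For step (i), I would perturb $\pi_\tau$ by a zero-mass signed measure $\varepsilon\tilde\pi_\tau$ and expand the state as $\tilde\bz^{\bx}_\tau = \bz^{\bx}_\tau + \varepsilon\Delta^{\bx}_\tau + o(\varepsilon)$. Linearizing \eqref{eqn: resnet-model} gives $\Delta^{\bx}_0 = 0$ and $d\Delta^{\bx}_\tau/d\tau = Q_\tau\Delta^{\bx}_\tau + \varepsilon\int\bvarphi(\bz^{\bx}_\tau,\bw)\tilde\pi_\tau(d\bw) + o(\varepsilon)$ with $Q_\tau = \EE_{\bw\sim\pi_\tau}[\nabla_{\bz}\bvarphi(\bz^{\bx}_\tau,\bw)]$. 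Solving by variation of constants, plugging into $\CR(\pi+\varepsilon\tilde\pi) - \CR(\pi) = \varepsilon\EE_{\bx}[\ell'(\bm{1}^T\bz^{\bx}_1,f^*(\bx))\bm{1}^T\Delta^{\bx}_1] + o(\varepsilon)$, and using Fubini to exchange the expectation over $\bx$ with the $\tau$-integral, I identify the co-state $\bp^{\bx}_\tau$ as the unique solution of \eqref{eqn: forward-back} and obtain $\lim_{\varepsilon\to 0}\varepsilon^{-1}(\CR(\pi+\varepsilon\tilde\pi)-\CR(\pi)) = \int_0^1\int V_\tau(\bw;\pi)\,\tilde\pi_\tau(d\bw)\,d\tau$ with $V_\tau(\bw;\pi) = \EE_{\bx}[(\bp^{\bx}_\tau)^T\bvarphi(\bz^{\bx}_\tau,\bw)]$. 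This is essentially the adjoint rerun of the previous proposition, with $\ba_\tau$ replaced by the transport direction for $\pi_\tau$.

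For step (ii), the metric $d^2(\pi^1,\pi^2) = \int_0^1 W_2^2(\pi^1_\tau,\pi^2_\tau)\,d\tau$ makes $(X,d)$ a product of Wasserstein spaces, so its tangent vectors decompose slicewise: by Benamou-Brenier, any admissible variation is generated by a family of velocity fields $(\bm{v}_\tau)$ satisfying $\partial_s\pi_\tau + \nabla_{\bw}\cdot(\pi_\tau\bm{v}_\tau)=0$ with tangent norm $\int_0^1\|\bm{v}_\tau\|_{L^2(\pi_\tau)}^2\,d\tau$. Substituting such a variation into the first-variation formula from step (i) and integrating by parts in $\bw$ yields $\partial_s\CR = \int_0^1\EE_{\bw\sim\pi_\tau}\langle\nabla_{\bw}V_\tau(\bw;\pi),\bm{v}_\tau(\bw)\rangle\,d\tau$, so the steepest-descent direction is $\bm{v}_\tau = -\nabla_{\bw}V_\tau$ at each $\tau$, and the corresponding continuity equation is exactly \eqref{eqn: flow-resnet}.

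The main obstacle is rigorizing step (i). Despite the fact that the slices $\{\pi_\tau\}_{\tau\in[0,1]}$ are coupled globally through \eqref{eqn: resnet-model}, one has to show that the Taylor expansion of $\bz^{\bx}_\tau$ is a genuine first-order expansion, uniformly in $\bx$ and in the direction $\tilde\pi$. This requires continuity and growth bounds on $\bvarphi$ analogous to Assumption \ref{assumption: feature-fbrf} plus a Gronwall estimate controlling $\Delta^{\bx}_\tau$ and the co-state propagator $e^{\int_\tau^1 Q_s^T\,ds}$. Once these are in place, the slicewise application of Otto's calculus in step (ii) is routine, and the PDE \eqref{eqn: flow-resnet} is the only continuity equation compatible with the metric $d$ and the variational derivative $V_\tau$.
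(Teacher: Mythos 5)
Your step (i) — the adjoint sensitivity analysis identifying $V_\tau(\bw;\pi)=\EE_{\bx}[(\bp^{\bx}_\tau)^T\bvarphi(\bz^{\bx}_\tau,\bw)]$ via the linearized state equation, the resolvent $e^{\int_\tau^1 Q_s\,ds}$, and Fubini — is essentially identical to the paper's derivation, which also perturbs $\pi\mapsto\pi+\varepsilon\delta$, solves for $\Delta_1$ by variation of constants, and reads off the co-state. Step (ii) diverges in technique: you invoke Benamou--Brenier and Otto's formal Riemannian calculus, characterizing tangent vectors as velocity fields $(\bm{v}_\tau)$ satisfying slicewise continuity equations and then reading off the steepest-descent direction $\bm{v}_\tau=-\nabla_{\bw}V_\tau$ from an integration by parts. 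The paper instead passes through the generalized minimizing movements (JKO) scheme, $\pi^{n+1}=\argmin_\pi\,\CR(\pi)+d^2(\pi,\pi^n)/(2\varepsilon)$, uses the first-variation identity to replace $\CR(\pi)-\CR(\pi^n)$ by $\int_0^1[H(\bz^n_\tau,\bp^n_\tau,\pi_\tau)-H(\bz^n_\tau,\bp^n_\tau,\pi^n_\tau)]\,d\tau$, and then observes that the minimization decouples slicewise in $\tau$, reducing to $m$ independent one-step JKO problems whose limit is the Wasserstein gradient flow of $H(\bz_\tau,\bp_\tau,\cdot)$. Both routes hinge on the same observation — the metric $d^2=\int_0^1 W_2^2\,d\tau$ makes $X$ a product of Wasserstein spaces so everything decouples in $\tau$ — and both are heuristic at the same level. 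The trade-off is that your Otto-calculus version is more transparent about \emph{why} a continuity equation appears and what the "gradient" means geometrically, while the paper's GMM route aligns directly with the rigorous machinery (Ambrosio--Gigli--Savar\'e) that the accompanying remark says is the path to a full proof via existence/uniqueness of the GMM limit. Your acknowledgment that rigorizing step (i) needs uniform Taylor expansion control and Gronwall bounds matches the paper's own caveat that the argument is heuristic.
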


For this gradient flow,  the energy dissipation relation is given by 
\begin{align}
\frac{d \CR}{d t} = - \int_0^1 \EE_{\bw\sim\pi_\tau(\cdot;t)}[\|\EE_{\bx}\nabla_{\bw}^T \bm{\varphi}(\bz_\tau^{\bx}(t),\bw)) \bp_\tau^{\bx}(t)\|^2]d\tau.
\end{align}

\paragraph*{Heuristic argument for  Proposition \ref{pro: gradient-flow-resnet}}.
To simplify notations, we first ignore the superscripts $\bx$. 
 Consider the following dynamics
\begin{align}
\frac{d\bz_\tau}{d\tau} &= \EE_{\bw\sim \pi_\tau}[\bvarphi(\bz_\tau,\bw)],\\
\frac{d\tilde{\bz}_\tau}{d\tau} &= \EE_{\bw\sim \pi_\tau+\varepsilon \delta_\tau}[\bvarphi(\tilde{\bz}_\tau,\bw)],
\end{align}
where the second equation is the dynamics generated by the perturbed parameter $\pi + \varepsilon \delta$.
Let $\Delta_\tau = \tilde{\bz}_\tau - \bz_\tau$,  we have 
\begin{align}
\frac{d \Delta_\tau}{d\tau} &= \EE_{\bw\sim\pi_\tau}[\bvarphi(\tilde{\bz}_\tau,\bw)-\bvarphi(\bz_\tau,\bw)] + \varepsilon\EE_{\bw\sim \delta_\tau}[\bvarphi(\tilde{\bz}_\tau,\bw)] \\
&= \EE_{\bw\sim\pi_\tau}[\nabla_{\bz}^T \bvarphi(\bz_\tau,\bw)] \Delta_\tau + \varepsilon\EE_{\bw\sim \delta_\tau}[\bvarphi(\tilde{\bz}_\tau,\bw)] + o(\varepsilon) 
\end{align}
Let $Q_\tau=\EE_{\bw\sim\pi_\tau}[\nabla_{\bz}^T \bvarphi(\bz_\tau,\bw)]$. Then,
integrating the ODE along with the  condition that $\Delta_0=0$, we get 
\begin{equation}\label{eqn: xxx}
\begin{aligned}
\Delta_1 &= \varepsilon \int_0^1 e^{\int_\tau^1 Q_s ds}\EE_{\bw\sim\delta_\tau} [\bvarphi(\tilde{\bz}_{\tau},\bw)] d\tau+ o(\varepsilon)\\
&= \varepsilon \int_0^1 e^{\int_\tau^1 Q_s ds} \EE_{\bw\sim\delta_\tau}[\bvarphi(\bz_\tau,\bw)]d\tau + o(\varepsilon).
\end{aligned}
\end{equation}
Now we have 
\begin{align}\label{eqn: varitional-resnet}
\nonumber \CR(\pi+\varepsilon \delta) - \CR(\pi) &= \ell(\bm{1}^T\tilde{\bz}_1,f^*(\bx)) - \ell(\bm{1}^T\bz_1,f^*(\bx))\\
\nonumber &= \varepsilon \langle \bm{1}\ell'(\bm{1}^T\bz_1,f^*(\bx)), \Delta_1\rangle  + o(\varepsilon) \\
\nonumber &= \varepsilon \left\langle \bm{1}\ell'(\bm{1}^T\bz_1,f^*(\bx)), \int_0^1 e^{\int_\tau^1 Q_s ds}\EE_{\bw\sim\delta_\tau} [\bvarphi(\tilde{\bz_\tau},\bw)]d\tau\right\rangle  + o(\varepsilon)\\
&= \varepsilon \int_0^1 \EE_{\bw\sim\delta_\tau}[\bp_\tau^T\bvarphi(\bz_\tau,\bw)] d\tau + o(\varepsilon),
\end{align}
where we have defined $\bp_\tau=e^{\int_\tau^1 Q_s^T ds} \bm{1} \ell'(\bm{1}^T\bz_1, f^*(\bx))$, and it  satisfies the following backward equation 
\begin{align}
\bp_1 &= \bm{1} \ell'(\bm{1}^T\bz_1^{\bx}, f^*(\bx)) \\
\frac{d\bp_\tau}{d\tau} &= - \EE_{\bw\sim\pi_\tau}[\nabla_{\bz}^T\bvarphi(\bz_\tau,\bw)]\bp_\tau = -\partial_{\bz} H(\bz_\tau, \bp_\tau,\pi_\tau).
\end{align}
Moreover, from the definition of $H$, it is easy to see that 
\[
    H(\bz,\bp,\mu+\varepsilon \delta) - H(\bz,\bp,\mu) = \varepsilon \EE_{\bw\sim \delta}[\bp^T\bvarphi(\bz,\bw)] + o(\varepsilon).
\]
Plugging the above equation into Eqn. \eqref{eqn: varitional-resnet} leads to 
\begin{equation}
\CR(\pi+\varepsilon \delta) - \CR(\pi) = \int_0^1 [H(\bz_\tau,\pi_\tau+\varepsilon \delta_\tau,\bp_\tau) - H(\bz_\tau,\pi_\tau,\bp_\tau)] d\tau + o(\varepsilon).
\end{equation}

Now we turn to the derivation of the gradient flow, defined as the limit of the generalized minimizing movements (GMM) scheme \cite{ambrosio2008gradient,santambrogio2017euclidean}:
\begin{equation}\label{eqn: rest-gmm}
\begin{aligned}
\pi^{n+1} &= \argmin \CR(\pi) + \frac{d(\pi,\pi^n)}{2\varepsilon}\\
&= \argmin \CR(\pi) - \CR(\pi^n) + \frac{d(\pi,\pi^n)}{2\varepsilon} \\
&= \argmin \int_0^1 [H(\bz_\tau^n,\bp_\tau^n,\pi_\tau) - H(\bz_\tau^n,\bp_\tau^n,\pi_\tau^n)] d\tau + \frac{\int_0^1 W_2^2(\pi_\tau,\pi_\tau^n)d\tau}{2\varepsilon} + o(\varepsilon) \\
&= \argmin \int_0^1 \left([H(\bz_\tau^n,\bp_\tau^n,\pi_\tau) - H(\bz_\tau^n,\bp_\tau^n, \pi_\tau^n)] + \frac{W_2^2(\pi_\tau,\pi_\tau^n)}{2\varepsilon} \right)  d\tau + o(\varepsilon).
\end{aligned}
\end{equation}
Therefore, we have for any $\tau\in [0,1]$ 
\[
    \pi^{n+1}_\tau = \argmin_{\mu} H(\bz_\tau^n,\bp_\tau^n,\mu) + \frac{W_2^2(\mu,\pi_\tau^n)}{2\varepsilon} + o(\varepsilon).
\]
The limit of the above  is exactly the 2-Wasserstein gradient flow for minimizing $H(\bz_\tau,\bp_\tau,\mu)$.
This gives us 
\[
    \partial_{t}\pi_\tau(\bw,t) = \nabla \cdot (\pi_\tau \nabla \frac{\delta H}{\delta \mu}(\bz_\tau(t),\bp_\tau(t), \bw)).
\]
Lastly, taking  expectation with respect to $\bx$, we complete the proof.
\qed

\begin{remark}
To make this argument rigorous, we need to establish the existence and uniqueness of the limit  
of GMM \eqref{eqn: rest-gmm}. This is a lengthy but straightforward argument. We will leave the details to interested reader. 
\end{remark}

Similar results have also been independently obtained in \cite{jabir2019mean}.

\section{Discretizations}

There are two kinds of discretization: discretization in the real space for the
probability distribution $\mu$ and discretization in the parameter space for the
variational problem and the flow.
The discretization in the real space is relatively straightforward for a typical supervised
or unsupervised learning problem. For problems in reinforcement learning or solving
PDEs, this can be more tricky. However, we will skip this issue here and leave it for
future work. Instead, we will focus on the discretization in the parameter space.

There are also two levels of discretization:  One can either discretize the variational problem for the loss function
and use one's favorite optimization algorithm on the discretized problem, or one can discretize the continuous 
integral-differential equation for the training dynamics.  
We will focus on the latter.

 \subsection{Recovering the two-layer neural network model}
Consider the functions admitting the following expectation representation,
\begin{equation}\label{eqn: transf-model}
    f(\bx;\pi) = \EE_{\bw\sim\pi} [\varphi(\bx;\bw)].
\end{equation}
The corresponding gradient flow is given by 
\be \label{eqn: trans-model-gradient-flow}
    \partial_t \pi_t  = \nabla \cdot (\pi_t \bv(\pi_t,\bw)),
\ee 
where $\bv$ is the velocity field given by 
\be\label{eqn: velocity-field}
\bv(\pi,\bw)= \EE_{\bx}[(f(\bx;\pi)-f^*(\bx))\nabla_{\bw}\varphi(\bx;\bw)].
\ee

Let us consider the simplest particle method discretization of the model \eqref{eqn: transf-model}
and the gradient flow \eqref{eqn: trans-model-gradient-flow}. We approximate $\pi$ by 
\be\label{eqn: particle-approx}
    \hat{\pi} (\bw) = \frac{1}{m}\sum_{k=1}^m \delta(\bw-\bw_k).
\ee
Here $m$ is the number of particles,  and $\{\bw_k\}_{k=1}^m$ are the $m$ particles. In this approximation, the evolution of $\hat{\pi}$ will be completely determined by the $m$ particles.

First, the function represented by $\hat{\pi}$ is given by 
\be 
    \begin{aligned}
    f(\bx;\hat{\pi}) &= \EE_{w\sim\hat{\pi}}[\varphi(\bx;\bw)] \\
                &=\frac{1}{m}\sum_{k=1}^m \varphi(\bx;\bw_k)
    \end{aligned}
\ee 
The weak formulation of \eqref{eqn: trans-model-gradient-flow} is given by 
\be 
    \frac{d}{dt}\int g(\bw) d\pi_t(\bw) = \int \nabla \cdot (\pi_t \bv(\pi_t,\bw)) g(\bw)  d\bw = - \int \langle \bv(\pi_t,\bw), \nabla g(\bw)\rangle d\pi_t(\bw),
\ee 
where $g$ is a test function.  Plugging \eqref{eqn: particle-approx} into the above equation, we get 
\be 
\frac{d}{dt} \frac{1}{m}\sum_{k=1}^m g(\bw_k)=\frac{1}{m}\sum_{k=1}^m \langle \nabla g(\bw_k),\frac{d\bw_k}{dt}\rangle =  - \frac{1}{m}\sum_{k=1}^m \langle \nabla g(\bw_k), \bv(\hat{\pi}_t,\bw_k) \rangle.
\ee 
Therefore, the dynamics of the particles follows 
\be 
    \frac{d\bw_k}{dt} = - \bv(\hat{\pi}_t,\bw_k).
\ee 

If we taking $\varphi(\bx;\bw) = a\sigma(\bb^T\bx)$ with $\bw:=(a,\bb)$, the particle method discretization is given by 
\begin{equation}\label{eqn: two-layer-net}
\begin{aligned}
    f(\bx;\hat{\pi}) &= \frac{1}{m}\sum_{k=1}^m a_k \sigma(\bb_k^T\bx) \\
    \frac{d a_k}{dt} &= - \EE_{\bx}[(f(\bx;\hat{\pi}_t)-f^*(\bx))\sigma(\bb_k^T\bx)] \\
    \frac{d \bb_k}{dt} &= - \EE_{\bx}[(f(\bx;\hat{\pi}_t)-f^*(\bx))a_k\sigma'(\bb_k^T\bx)\bx].
\end{aligned}
\end{equation}
This is the the (continuous time) gradient descent dynamics for the {\it scaled} (i.e. with the factor $1/m$ in front of the expression for $f$)
two layer neural network model.

It can be shown that the dynamics described above is exactly the same as that of the GD for scaled two-layer neural networks.
In fact, we have:

\begin{lemma}
Given a set of initial data $\{\bw_k^0,  k \in [m] \}$.
The solution of \eqref{eqn: trans-model-gradient-flow} with initial data
$\pi(0) = \frac 1m \sum_{k=1}^m \delta_{\bw_k^0}$ is given by
$$\pi(t) = \frac 1m \sum_{k=1}^m \delta_{\bw_k(t)}
$$
where $\{\bw_k(\cdot), k \in [m] \}$
solves the following systems of ODEs:
\begin{equation*}
\label{GD-NN}
\frac{d \bw_k}{dt} = - \bv(\pi_t,\bw_k), \quad
\bw_k(0) = \bw_k^0, \quad k \in [m]
\end{equation*}
\end{lemma}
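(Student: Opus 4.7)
The plan is to verify that the empirical-measure ansatz satisfies the continuity equation \eqref{eqn: trans-model-gradient-flow} in the weak sense, and then invoke uniqueness to conclude that it is the solution.

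First I would substitute $\pi(t) = \frac{1}{m}\sum_{k=1}^m \delta_{\bw_k(t)}$ into the weak formulation of \eqref{eqn: trans-model-gradient-flow}, which (as derived in the paragraphs preceding the lemma) states that for every smooth test function $g$,
\begin{equation*}
\frac{d}{dt}\int g(\bw)\, d\pi_t(\bw) = -\int \langle \nabla g(\bw), \bv(\pi_t,\bw)\rangle \, d\pi_t(\bw).
\end{equation*}
For the empirical measure, the left-hand side equals $\frac{1}{m}\sum_{k=1}^m \langle \nabla g(\bw_k(t)), \dot{\bw}_k(t)\rangle$ by the chain rule, while the right-hand side equals $-\frac{1}{m}\sum_{k=1}^m \langle \nabla g(\bw_k(t)), \bv(\pi_t,\bw_k(t))\rangle$. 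Demanding equality for every test function $g$ forces the coefficient of each $\nabla g(\bw_k(t))$ to agree, giving exactly the ODE system $\dot{\bw}_k = -\bv(\pi_t,\bw_k)$ with the prescribed initial conditions. Hence the proposed empirical measure is a weak solution of \eqref{eqn: trans-model-gradient-flow}.

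Next, to conclude that it is \emph{the} solution (as stated by the lemma), I would invoke uniqueness for the continuity equation with velocity field $\bv(\pi_t,\cdot)$. Under mild regularity on $\varphi$ (e.g.\ $\nabla_{\bw}\varphi$ bounded and Lipschitz in $\bw$, so that $\bv(\pi,\cdot)$ is Lipschitz uniformly in $\pi$ ranging over measures supported on a bounded set), the coupled ODE system for $\{\bw_k(t)\}$ admits a unique global solution by Picard--Lindelöf. For the PDE uniqueness, one can then apply the standard uniqueness theory for continuity equations with Lipschitz drift (e.g.\ the DiPerna--Lions/Ambrosio theory, or more elementarily the superposition principle), noting that the nonlinearity through $\pi_t$ becomes a prescribed Lipschitz field once one substitutes the candidate solution, so a fixed-point or Grönwall argument closes the loop.

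The main obstacle is the nonlinear coupling: $\bv$ depends on $\pi_t$ itself through $f(\bx;\pi_t) = \mathbb{E}_{\bw\sim\pi_t}\varphi(\bx;\bw)$, so uniqueness for the PDE is not off-the-shelf. The cleanest way to handle this is to set up a fixed-point argument on a short time interval: given any candidate $\tilde{\pi}_t$, the velocity $\bv(\tilde{\pi}_t,\cdot)$ is Lipschitz, the corresponding linear continuity equation has a unique push-forward solution, and the map $\tilde{\pi}\mapsto \pi$ is contractive in, e.g., the bounded-Lipschitz or $W_1$ distance on a small time interval. Then both the empirical-measure solution constructed above and any other weak solution must coincide with this fixed point, which extends to arbitrary times by iteration. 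The routine Grönwall/Picard estimates I would skip, but this uniqueness step is the one requiring care; the matching of the ODE system with the weak formulation is essentially immediate.
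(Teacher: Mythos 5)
Your proposal takes essentially the same route the paper does: the paper's argument is exactly the weak-formulation computation that you lay out (plug the empirical measure into the weak form of \eqref{eqn: trans-model-gradient-flow}, evaluate both sides via the chain rule, and read off the ODE system), which appears in the paragraphs immediately preceding the lemma; the lemma itself is then stated without a separate proof. Your added uniqueness step (Picard--Lindel\"of for the ODE plus a fixed-point/Gr\"onwall or superposition-principle argument for the nonlinear continuity equation) is a correct and appropriate refinement of a point the paper leaves implicit, but it is not a different approach.
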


In particular, this lemma says that the continuous flow equation \eqref{eqn: trans-model-gradient-flow}
also holds for the discrete case with a finite set of neurons.

\subsection{A smoothed particle method}

A popular modification of the particle method is the smoothed particle method.
Here we illustrate how one can formulate the smoothed particle method
for the integral transform-based model \eqref{eqn: transf-model} and 
the gradient flow \eqref{eqn: trans-model-gradient-flow}.
We will consider the special case when $\phi(\bx;\bw)=a\sigma(\bb^T\bx)$. Here $\bw=(a,\bb)$ and $\sigma(t)=\max(0,t)$ is the ReLU activation function.

Consider a smoothed particle approximation \cite{monaghan2005smoothed} to $\pi_t$ \footnote{This also coincides with the 
Gaussian mixture approximation suggested by Jianfeng Lu.}
\[
\hat{\pi}_t(\bw)=\frac{1}{m}\sum_{k=1}^m \phi_h(\bw-\bw_k(t)),
\]
where $\phi_h$ is the probability density function of $\cN(0, h^2I)$. 
The smoothed particle discretization of the flow-based model and the gradient flow is given by 
\begin{align}\label{eqn: sm-f}
\nonumber  f(\bx;\hat{\pi}_t) &= \EE_{\bw\sim\hat{\pi}_t}[\phi(\bx;\bw)]\\
                        &= \frac{1}{m}\sum_{k=1}^m \EE_{\bm{\xi}}[\phi(\bx;\bw_k+h\bm{\xi})]\\
    \frac{d\bw_k}{dt} &= \nonumber \EE_{\bw\sim\phi_h(\cdot-\bw_k(t))}[\bv(\hat{\pi}_t,\bw)] \\ \label{eqn: sm-g}
                    &= \EE_{\bm{\xi}}[\bv(\hat{\pi}_t,\bw_k+h\bm{\xi})]
\end{align}
where $\bm{\xi}\sim\cN(0,I_{d+1})$. The right hand side of the last equality is the smoothed velocity. 

For this to be a practical numerical algorithm, we need a way to evaluate the terms in \eqref{eqn: sm-f} and \eqref{eqn: sm-g}.
We will defer this to a future publication.
To get some insight about the nature of this smooth particle method, we consider the special case when
the data lies on the sphere, i.e. $\|\bx\|=1$.

Write $\bm{\xi}=(\xi_1,\bm{\xi}_2)$ with $\xi_1\in\RR$ and $\bm{\xi}_2\in\RR^d$, then the smoothed particle method becomes 
\begin{align}
\nonumber    f(\bx;\hat{\pi}) &= \EE_{(a,\bb)\sim\hat{\pi}} [a\sigma(\bb^T\bx)]\\
\nonumber    &= \frac{1}{m}\sum_{k=1}^m \EE_{(\xi_1,\bm{\xi}_2)}[(a_k+ h\xi_1)\sigma((\bb_k+ h \bm{\xi}_2)^T\bx)]\\
\nonumber    &= \frac{1}{m}\sum_{k=1}^m a_k\EE_{\bm{\xi}_2}[\sigma(\bb_k^T\bx+ h \bm{\xi}_2^T\bx)]\\
    &= \frac{1}{m}\sum_{k=1}^m a_k\EE_{\xi\sim\cN(0,1)}[\sigma(\bb_k^T\bx+ h \xi)],
\end{align}
where in the last equation we have used the assumption that $\|\bx\|=1$. Define a new activation function 
\begin{align}\label{eqn: smoothed-relu}
 \nonumber   \sigma_h(t) &= \EE_{\xi\sim\cN(0,1)}[\sigma(t+h\xi)] = \int_{-t/h}^{\infty} (t+h\xi) \frac{1}{\sqrt{2\pi}}e^{-\xi^2/2}d\xi.\\
    &= t\Phi(\frac{t}{h}) + h \phi(\frac{t}{h}),
\end{align}
where $\phi, \Phi$ are the probability density  and cumulative density functions of the standard normal distribution, respectively. 
Then the discretized model can be rewritten as 
\be\label{eqn: smooth-two-layer}
    f(\bx;\hat{\pi}) = \frac{1}{m}\sum_{k=1}^m a_k \sigma_h(\bb_k^T\bx).
\ee 
This is  a new two-layer neural network with  activation function $\sigma_h$, which can be viewed  as a ``smoothed'' ReLU.
It is easy to see    $\sup_{t\in\RR} |\sigma_h(t) -\sigma(t)| = O(h)$.  Figure \ref{fig: activation} shows the difference between the two activation functions.

From Eqn. \eqref{eqn: velocity-field} and \eqref{eqn: sm-g}, we see that the evolution of particles follows
\begin{align}
\nonumber    \frac{d a_k}{dt} &= \EE_{\bm{\xi}}[\EE_{\bx}[(f(\bx;\hat{\pi})-f^*(\bx))\sigma(\bb_k^T\bx+h\bm{\xi_2}^T\bx)]] \\
    &= \EE_{\bx}[(f(\bx;\hat{\pi})-f^*(\bx))\sigma_h(\bb_k^T\bx)]\\
\nonumber \frac{d\bb_k}{dt} &= \EE_{\bm{(\xi_1,\bm{\xi}_2)}}[\EE_{\bx}[(f(\bx;\hat{\pi})-f^*(\bx))(a+h\xi_1)\sigma'(\bb_k^T\bx+h\bm{\xi}_2^T\bx)\bx]] \\
&= \EE_{\bx}[(f(\bx;\hat{\pi})-f^*(\bx))a \sigma_h'(\bb_k^T\bx)\bx].
\end{align}
This is exactly the  gradient descent dynamics for  the two-layer smoothed ReLU network  \eqref{eqn: smooth-two-layer}.
\begin{figure}[!h]
    \centering
    \includegraphics[width=0.4\textwidth]{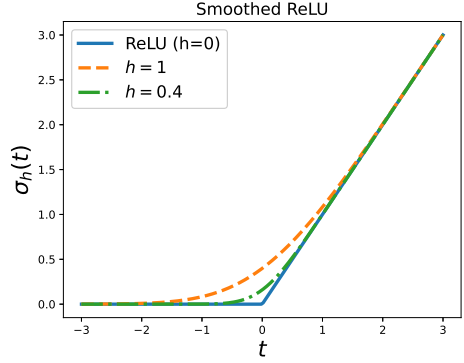}
    \vspace*{-4mm}
    \caption{\small Comparison between ReLU and the smoothed ReLU activation functions.}
    \label{fig: activation}
\end{figure}

    It should be noted that the dominate term $t\Phi(t/h)$ in \eqref{eqn: smoothed-relu} is exactly the activation function Gaussian Error Linear Unit (GELU) \cite{hendrycks2016gaussian}, which  has become quite popular recently \cite{devlin2018bert}.

\subsection{A new algorithm for integral transform-based models}
Consider the representation
\begin{align}
f(\bx; a, \pi) = \int a(\bb) \sigma(\bb^T\bx) d\pi(\bb).
\end{align}
We now view both $a$ and $\pi$ as parameters.

For the loss functional \eqref{eqn: loss-functional}, we have  
\be 
\begin{aligned}
\frac{\delta \CR}{\delta a} &= \EE_{\bx}[(f(\bx;a,\pi)-f^*(\bx))\sigma(\bb^T\bx)] \\
\frac{\delta \CR}{\delta \pi}&=\EE_{\bx}[(f(\bx;a,\pi)-f^*(\bx))a(\bb)\sigma(\bb^T\bx)].
\end{aligned}
\ee
It is tricky to design a particle method  for the combined model A and model B dynamics for this problem
\eqref{eqn: A-dynamics} and  \eqref{eqn: B-dynamics}.
 Therefore we consider instead the modified  "gradient flow":
\begin{align}\label{eqn: nonconservatie-gflow-two-layer}
\partial_t a_t +\bv(a_t,\pi_t,\bb) \cdot \nabla a &=  - \frac{\delta \CR}{\delta a} ,\\
\partial_t \pi_t &=- \nabla \cdot \big(\pi_t \bv(a_t,\pi_t,\bb) \big),
\end{align}
where 
\begin{align}
 \nonumber   \bv(a,\pi,\bb) &= - \nabla \frac{\delta R}{\delta \pi}(a,\pi,\bb) \\
    &= -\EE_{\bx}[(f(\bx;a,\pi)-f^*(\bx))(\nabla a(\bb) \sigma(\bb^T\bx) + a(\bb)\sigma'(\bb^T\bx)\bx].
\end{align}

Let $\hat{\pi}_t = \frac{1}{m}\sum_{j=1}^m \delta(\cdot-\bb_j(t))$. For each particle $\bw_j$,  define two quantities: 
\[
a_j(t) = a(\bb_j(t),t), \quad \bu_j = \nabla a(\bb_j(t),t).
\]
Denote by $\hat{\ba}=\{a(\bb_j(t), t)\}_{j=1}^m$.
Then the  function represented by $\hat{a}$ and $\hat{\pi}$ is given by 
\begin{equation}
f(\bx;\hat{a},\hat{\pi}) = \frac{1}{m}\sum_{j=1}^m a_j (t) \sigma(\bb_j(t)^T\bx).
\end{equation}
It is now straightforward to derive the dynamics for the particle method:
\begin{align}
 \nonumber   \frac{d \bb_j}{dt} &= \bv(\hat{a}_t,\hat{\pi}_t,\bb_j) \\
    &= - \EE_{\bx}[(f(\bx;\hat{a}_t,\hat{\pi}_t)-f^*(\bx)))(\bu_j \sigma(\bb^T_j\bx)+a_j\bx \sigma'(\bb_j^T\bx))], \label{eqn: u_particle_w}
\end{align}  
\begin{align}
\nonumber    \frac{d a_j}{dt} &= \frac{d a(\bb_j(t),t)}{dt} = \langle \nabla a(\bb_j,t), \dot{\bb}_j \rangle  + \partial_t a(\bb_j, t)\\
    &= -\EE_{\bx}[(f(\bx;\hat{a}_t,\hat{\pi}_t)-f^*(\bx))\sigma(\bb^T_j\bx)] \label{eqn: u_particle_a}\\
  \nonumber  \frac{d \bu_j}{dt} &= \nabla\frac{d a(\bb_j(t),t)}{dt} \\
    &= - \EE_{\bx}[(f(\bx;\hat{a}_t,\hat{\pi}_t)-f^*(\bx))\sigma'(\bb_j^T\bx)\bx] \label{eqn: u_particle_u}
\end{align}

\begin{figure}[!h]
    \centering
    \includegraphics[width=0.39\textwidth]{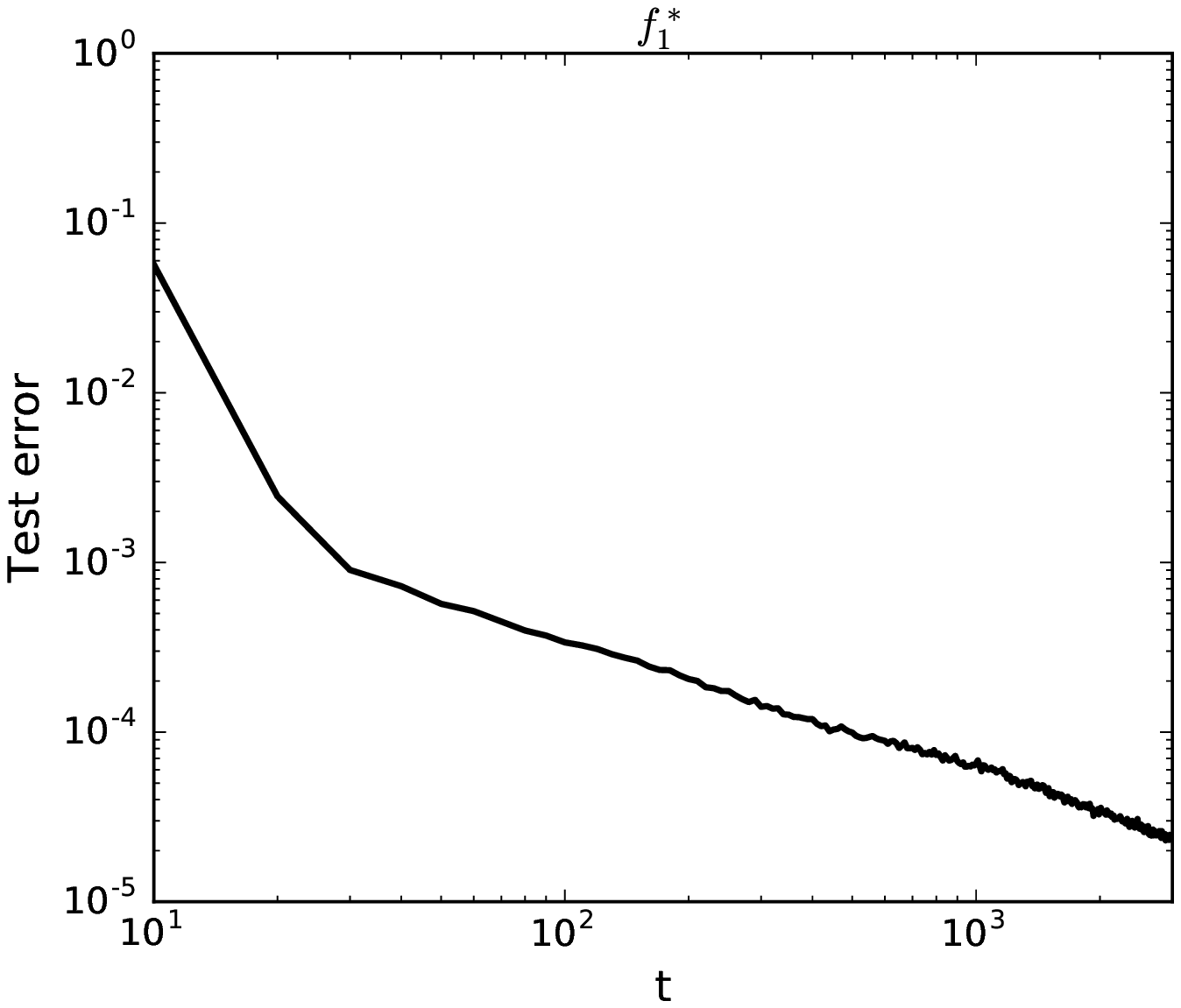}
    \includegraphics[width=0.39\textwidth]{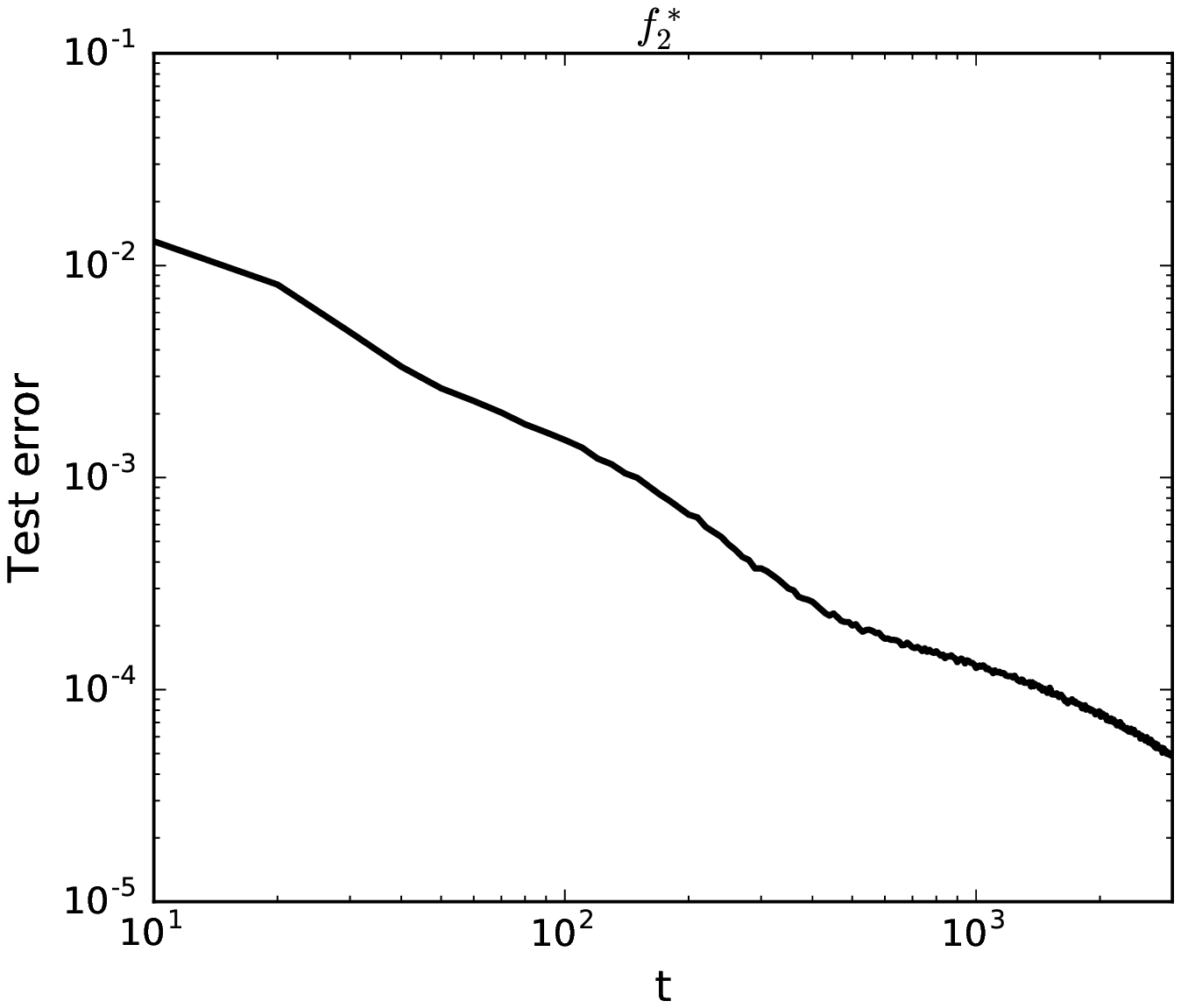}
    \vspace*{-3mm}
    \caption{\small The testing error along the dynamics of the particle discretization in~\eqref{eqn: u_particle_a}, \eqref{eqn: u_particle_u} and~\eqref{eqn: u_particle_w}.  The figure on the left is the result for a target function in the RKHS.
    The figure on the right is the result for a target function outside of the RKHS.  
    See the main text for the details of the target functions.}
    \label{fig: u_particle}
\end{figure}

To see that this is a reasonable algorithm, we report the results of some preliminary numerical experiments for this 
particle method.
We let $\bx\in \RR^{10}$ and $\sigma$ be the ReLU activation function. 
 $a_j$ and $\bu_j$ are initialized from $0$ and $\bw_j$ is initialized from a standard Gaussian.
 We  take $m=1000$. 
 For simplicity, we approximate the expectations in~\eqref{eqn: u_particle_a}, \eqref{eqn: u_particle_u} and~\eqref{eqn: u_particle_w} using $100$ online samples at each step. Hence there is no generalization gap. The forward Euler scheme is used to  numerically solve the ODEs and the step size is $0.1$. 
 We consider two kinds of target functions. The first one is given by
\begin{equation}
    f_{1}^*(\bx) = \sum_{i=1}^{10} c_i K(\bx,\bx_i),
\end{equation}
where $
    K(\bx,\bx') = \int_{\SS^{9}} \sigma(\bw^T\bx)\sigma(\bw^T\bx')\pi(d\bw),
$
with $\pi$ being the uniform distribution on $\SS^9$,
$c_i\in\RR$, $\bx_i\in\RR^{10}$ are randomly sampled. 
This function belongs to the reproducing kernel Hilbert space (RKHS) associated with $K$.
The second one is a function outside of the RKHS given by  
\[
f^*_2(\bx)=\sum_{i=1}^{10} \sigma(\bw_i^T\bx),
\] 
where $\{\bw_i\}$ are randomly drawn from $\pi$.

The results for this experiment are reported in Figure~\ref{fig: u_particle}. Figure~\ref{fig: u_particle} shows that for both target functions,
the testing errors decrease nicely in the rate $O(1/t)$ during the training process.

\section{The generalization error}

Let $\cH,\cH_m$ denote the spaces of functions represented by the continuous and discretized model, respectively. Here $m$ denotes the number of grid points or particles in the discretization. 
Let $S$ denote the training set and $|S|=n$. Denote by $\hat{f}_{m,n,t}$ the solution generated by the gradient descent 
dynamics at time $t$,  and let
\begin{align}
    \CR(\hat{f}_{m,n,t}) = \EE_{\bx}[\ell(\hat{f}_{m,n,t}(\bx),f^*(\bx))].
\end{align}

One way to address the generalization problem is to look for an  estimate of the following type
\be\label{eqn: apriori-estimate}
\CR(\hat{f}_{m,n,t})\leq e(1/m,1/n,t,\|f^*\|),
\ee
where $\|f^*\|$ is some norm of the target function.  

There are two ways to obtain  estimates of the type in \eqref{eqn: apriori-estimate}. One is through the a priori estimates of the discretized gradient descent dynamics. The other is through the a priori estimates of the gradient flow, i.e. the PDEs.  
 In the following, we illustrate both approaches using the random feature model.
We recover results proved in  \cite{carratino2018learning} with simpler arguments.

\paragraph{Learning with random features. } 
Assume that the target function is given by
\[
f^*(\bx)=\EE_{\bb\sim\pi}[a^*(\bb)\varphi(\bx;\bb)],
\]
where $\pi$ is a fixed probability distribution. Assume that $|\varphi(\bx;\bb)|\leq 1$. The RKHS norm of $f^*$ is given by
\[
    \|f^*\|_{\cH} := \sqrt{\EE_{\bb\sim\pi}[|a^*(\bb)|^2]}.
\]
We also make the following assumption 
\[
    \|f^*\|_{\infty} = \text{ess sup}_{\bb} |a^*(\bb)| \in [1, \infty).
\]
The particle method discretization is given by 
\begin{align}
    f_m(\bx;\ba,\Bb^0) = \frac{1}{m}\sum_{j=1}^m a_j \varphi(\bx;\bb_j^0),
\end{align}
where $\ba=(a_1,\dots,a_m)^T\in\RR^m$ is the parameter to be learned and $\Bb^0=(\bb_1^0,\dots,\bb_m^0)^T$ are randomly
sampled from $\pi$. 
The loss function is defined to be
\be 
    \hat{\CR}_n(\ba) = \frac{1}{n}\sum_{i=1}^n \ell(f(\bx_i;\ba,\Bb^0),f^*(\bx_i)).
\ee 
The gradient descent is then given by 
\[
    \frac{d\ba}{dt} = - \nabla \hat{\CR}_n(\ba).
\]
We assume that $\ba_0=\bm{0}$.

The subtlety of the problem can be appreciated from the  work of \cite{belkin2019reconciling} 
which shows that the generalization error of this model can be very large in the regime where $m\approx n$.

\subsection{Analyzing the discretized model}
We  decompose the generalization error into two terms:
\begin{align} \label{eqn: gen-err-decomp-1}
    \CR(\hat{f}_{m,n,t}) =   \underbrace{\hat{\CR}_n(\hat{f}_{m,n,t})}_{I_1}  +  \underbrace{\CR(\hat{f}_{m,n,t})- \hat{\CR}_n(\hat{f}_{m,n,t})}_{I_2}.
\end{align}
Here $I_1,I_2$ are the optimization (training) error and generalization gap, respectively. 

The general philosophy is that the generalization gap is bounded by a term of the form $\|f_{m, n, t} \|/\sqrt{n}$.
Here $\| \cdot \|$ is some norm determined by the model.
For example, for random feature models, this is the RKHS norm.
For two-layer neural network models, this is the Barron norm \cite{e2018priori}.
Therefore to estimate the generalization gap, one needs to derive a priori bounds on these norms.


For any $\bar{\ba}\in\RR^m$, define 
\be 
    J(t) := t(\hat{\CR}_n(\ba_t)-\hat{\CR}_n(\bar{\ba})) + \frac{1}{2}\|\ba_t - \bar{\ba}\|_2^2.
\ee 
Since $\hat{\CR}_n(\ba)$ is convex, we have $dJ/dt\leq 0$. So $J(t)\leq J(0)$, i.e. 
\[
    t(\hat{\CR}_n(\ba_t)-\hat{\CR}_n(\bar{\ba})) + \frac{1}{2}\|\ba_t - \bar{\ba}\|_2^2\leq \frac{1}{2}\|\ba_0 - \bar{\ba}\|_2^2.
\]
This gives
\begin{equation}\label{eqn: GDF-random-feature}
\begin{aligned}
    \hat{\CR}_n(\ba_t) &\leq \frac{\|\bar{\ba}\|^2_2}{2t} \\
    \|\ba_t\|_2^2 &\leq 2\|\bar{\ba}\|_2^2 + 2t\hat{\CR}_n(\bar{\ba}).
\end{aligned}
\end{equation}
The first inequality gives a bound on the training error.
The second inequality provides a bound for the norm of the parameters.

Using \eqref{eqn: gen-err-decomp-1} and the Rademacher complexity \cite{bartlett2002rademacher} bound for the generalization gap (see Eqn. (93-95) in \cite{ma2019comparative}),   we have the following i estimates. 

\begin{proposition}\label{thm: aposteriori}
For any $\delta\in (0,1)$, with probability $1-\delta$ over the training examples, we have 
\begin{align}\label{eqn: aposteriori}
 \CR(\ba)\lesssim \hat{\CR}_n(\ba) + \frac{\|\ba\|^2/m+\|f^*\|_\cH^2}{\sqrt{n}}\left(1+\sqrt{\log((\|\ba\|/\sqrt{m}+1)^2/\delta)}\right).
\end{align}
\end{proposition}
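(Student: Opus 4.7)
The plan is to bound the generalization gap $\CR(\ba)-\hat{\CR}_n(\ba)$ by a Rademacher complexity argument at a fixed norm level and then convert it into an a posteriori statement via a peeling (stratification) argument in $\|\ba\|_2/\sqrt{m}$. Throughout I would condition on the random features $\Bb^0$, so the claim is actually with high probability over the data $S$ conditionally on $\Bb^0$.

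First, fix a level $Q \geq 0$ and introduce the truncated random-feature class
\[
\cF_Q = \Big\{\,\bx \mapsto \tfrac{1}{m}\sum_{j=1}^m a_j\,\varphi(\bx;\bb_j^0) \;:\; \tfrac{\|\ba\|_2^2}{m}\leq Q \,\Big\}.
\]
Since $|\varphi|\leq 1$, Cauchy--Schwarz gives the uniform bound $|f_m(\bx;\ba,\Bb^0)|\leq \|\ba\|_2/\sqrt{m}\leq \sqrt{Q}$. A direct second-moment calculation on $\frac{1}{n}\sum_i \epsilon_i \varphi(\bx_i;\bb_j^0)$, followed by Cauchy--Schwarz in $\ba$, yields $\mathrm{Rad}_n(\cF_Q)\lesssim \sqrt{Q/n}$. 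Also $\|f^*\|_\infty\leq \|f^*\|_{\cH}$ since $f^*=\EE_{\bb\sim\pi}[a^*(\bb)\varphi(\cdot;\bb)]$ and $|\varphi|\leq 1$. Hence on the event that both $f_m$ and $f^*$ lie in $[-B_Q,B_Q]$ with $B_Q \lesssim \sqrt{Q}+\|f^*\|_{\cH}$, the squared loss is $(2B_Q)$-Lipschitz in its first argument and itself bounded by $(2B_Q)^2$.

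Second, combine Talagrand's contraction lemma with the standard uniform-deviation bound (the Rademacher inequality cited as Eqn.~(93--95) in \cite{ma2019comparative}) to get, for any $\delta'\in(0,1)$, that with probability at least $1-\delta'$,
\[
\sup_{\ba:\;\|\ba\|_2^2/m\leq Q}\big[\CR(\ba)-\hat{\CR}_n(\ba)\big] \;\lesssim\; (\sqrt{Q}+\|f^*\|_{\cH})\cdot\sqrt{Q/n} \;+\; \big(Q+\|f^*\|_{\cH}^2\big)\sqrt{\tfrac{\log(1/\delta')}{n}}.
\]
Using $\sqrt{Q}\,\|f^*\|_{\cH}\leq \tfrac{1}{2}(Q+\|f^*\|_{\cH}^2)$, both terms collapse into
\[
\frac{Q+\|f^*\|_{\cH}^2}{\sqrt{n}}\Big(1+\sqrt{\log(1/\delta')}\Big).
\]

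Third, I would peel over $Q_k = 2^k$ for $k=0,1,2,\dots$, applying the above bound on each shell $\|\ba\|_2^2/m\in[Q_{k-1},Q_k]$ with failure probability $\delta_k=\delta\,2^{-(k+1)}$. Since $\sum_k\delta_k=\delta/2<\delta$, a union bound gives all shells simultaneously. For any $\ba$, pick $k^\star=\lceil \log_2(\|\ba\|_2^2/m+1)\rceil$, so that $\ba$ belongs to the $k^\star$-th shell with $Q_{k^\star}\lesssim \|\ba\|_2^2/m+1$ and $\log(1/\delta_{k^\star})\lesssim k^\star+\log(1/\delta)\lesssim \log((\|\ba\|/\sqrt{m}+1)^2/\delta)$. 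Substituting these into the fixed-level bound and absorbing absolute constants produces \eqref{eqn: aposteriori}.

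The main obstacle, in my view, is not any single step but the bookkeeping: one must check that the Lipschitz constant $\sqrt{Q}+\|f^*\|_{\cH}$ of the squared loss, when multiplied by the Rademacher rate $\sqrt{Q/n}$, combines with the concentration term (which scales like $B_Q^2$) to give a uniform quadratic-in-norm prefactor $\|\ba\|^2/m+\|f^*\|_{\cH}^2$ rather than a suboptimal linear-in-norm one, and that the peeling decay $\delta_k=\delta\cdot 2^{-(k+1)}$ is sharp enough to yield a single $\log$ of $(\|\ba\|/\sqrt{m}+1)^2/\delta$ rather than a double log. Everything else is standard, and once these pieces are correctly aligned the proposition follows.
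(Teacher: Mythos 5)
Your proof is correct and follows essentially the same route as the paper's: bound the squared loss class via contraction on the Rademacher complexity of a norm ball, then upgrade from a fixed norm level to an a posteriori bound by a union bound over dyadic levels (the paper's terse ``using the union bound'' is exactly your peeling step, and your $Q$ is the paper's $C^2$). One small caution: you write ``$\|f^*\|_\infty\le\|f^*\|_\cH$,'' but the paper reserves $\|f^*\|_\infty$ for $\operatorname{ess\,sup}_\bb|a^*(\bb)|$ (for which this inequality fails); what you actually use, and what the paper's proof also uses, is $\sup_\bx|f^*(\bx)|\le\|f^*\|_\cH$, so the mathematics is fine but the notation collides with the paper's.
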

\begin{proof}
Let $\cF_C :=\{f_m(\cdot;\ba,\Bb^0): \|\ba\|/\sqrt{m}\leq C\}$ and  $\cH_C:=\{(f_m(\cdot;\ba,\Bb_0)-f^*)^2: \|\ba\|/\sqrt{m}\leq C\}$. 
By Cauchy-Schwarz inequality,  $|f_m(\bx;\ba,B_0)|\leq C$ and $|f^*(\bx)|\leq \sqrt{\int a(\bb)^2d\pi(\bb)}\leq \|f^*\|_{\cH}$. Hence,  $g(t)=(t-y_i)^2$ is $2(C+\|f^*\|_\cH)$-Lipschitz continuous. Then by the contraction property of Rademacher complexity, we have 
\begin{align*}
    \rad_n(\cH_C)\leq 2(C+\|f^*\|_\cH)\rad_n(\cF_C)\leq \frac{2C(C+\|f^*\|_\cH)}{\sqrt{n}},
\end{align*}
where the last inequality follows from the fact that 
$
    \rad(\cF_C)\leq \frac{C}{\sqrt{n}}
$ \cite{shalev2014understanding}.
Hence,  with probability $1-\delta$ we have for any $\ba$ satisfying  $\|\ba\|/\sqrt{m}\leq C$,
\begin{align}
    \CR(\ba)&\lesssim \hat{\CR}_n(\ba)+\frac{(C+\|f^*\|_\cH)C}{\sqrt{n}} + (\|f^*\|_\cH+C)^2\sqrt{\frac{\log(2/\delta)}{n}}\\
    &\lesssim \hat{\CR}_n(\ba) + \frac{C^2+\|f^*\|_\cH^2}{\sqrt{n}}\left(1+\sqrt{\log(1/\delta)}\right).
\end{align}

Using the union bound, we obtain 
\begin{align*}
 \CR(\ba)&\lesssim \hat{\CR}_n(\ba) + \frac{\|\ba\|^2/m+\|f^*\|_\cH^2}{\sqrt{n}}\left(1+\sqrt{\log((\|\ba\|/\sqrt{m}+1)^2/\delta)}\right)\\
 &\leq \hat{\CR}_n(\ba) + \frac{\|\ba\|^2/m+\|f^*\|_\cH^2}{\sqrt{n}}\left(1+\sqrt{\log(2(\|\ba\|^2/m+1)/\delta)}\right)
\end{align*}

\end{proof}

The following proposition provides a bound on the approximation error on finite training samples. 

\begin{proposition}\label{pro: approx-empirical-error}
Given the training set $\{(\bx_i,f(\bx_i))\}_{i=1}^n$, for any $\delta\in (0,1)$, we have that with probability at least $1-\delta$ over the sampling of $\Bb^0$, there exists $\tilde{\ba}\in\RR^m$ such that 
\begin{align}
\hat{\CR}_n(\tilde{\ba})&\leq \frac{2\log(2n/\delta)}{m} \|f\|_{\cH}^2  +  \frac{8\log^2(2n/\delta)}{9m^2} \|f\|^2_{\infty},\\
    \frac{\|\tilde{\ba}\|^2}{m} &\leq \|f\|_\cH^2 + \sqrt{\frac{\log(2/\delta)}{2m}}\|f\|^2_{\infty}
\end{align}
\end{proposition}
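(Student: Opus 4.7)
The plan is to take the obvious Monte Carlo choice $\tilde{a}_j = a^*(\bb_j^0)$, so that
\[
f_m(\bx;\tilde{\ba},\Bb^0) = \frac{1}{m}\sum_{j=1}^m a^*(\bb_j^0)\varphi(\bx;\bb_j^0)
\]
is an unbiased estimator of $f^*(\bx) = \EE_{\bb\sim\pi}[a^*(\bb)\varphi(\bx;\bb)]$ since $\bb_j^0 \stackrel{iid}{\sim} \pi$. The two required bounds then both reduce to concentration of sample means of iid bounded random variables.

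For the empirical risk bound, I would fix a training point $\bx_i$ and apply Bernstein's inequality to the iid variables $X_j := a^*(\bb_j^0)\varphi(\bx_i;\bb_j^0)$. Using $|\varphi|\leq 1$, these are bounded by $\|f^*\|_\infty$ and have variance at most $\EE_{\bb}[a^*(\bb)^2] = \|f^*\|_{\cH}^2$. Bernstein then gives, with probability at least $1-\delta/n$,
\[
|f_m(\bx_i;\tilde{\ba},\Bb^0) - f^*(\bx_i)| \;\lesssim\; \sqrt{\frac{\log(2n/\delta)}{m}}\,\|f^*\|_{\cH} + \frac{\log(2n/\delta)}{m}\,\|f^*\|_\infty.
\]
A union bound over $i=1,\dots,n$ controls this simultaneously at every training point; squaring and averaging (via $(a+b)^2\le 2a^2+2b^2$) yields the stated bound on $\hat{\CR}_n(\tilde{\ba})$ with the $1/m$ and $1/m^2$ terms corresponding to the variance and boundedness contributions of Bernstein.

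For the norm bound, observe that $\|\tilde{\ba}\|^2/m = \frac{1}{m}\sum_j (a^*(\bb_j^0))^2$ is the empirical mean of iid variables with expectation $\|f^*\|_{\cH}^2$, bounded by $\|f^*\|_\infty^2$. Hoeffding's inequality gives, with probability $1-\delta/2$,
\[
\frac{\|\tilde{\ba}\|^2}{m} \leq \|f^*\|_{\cH}^2 + \sqrt{\frac{\log(2/\delta)}{2m}}\,\|f^*\|_\infty^2,
\]
which is exactly the second claim. Finally I combine the two events by a union bound with total failure probability $\delta$ (splitting budgets appropriately between the Bernstein step and the Hoeffding step).

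The only mildly delicate point is keeping the Bernstein constants clean so that the final empirical risk bound carries precisely the stated $\tfrac{2\log(2n/\delta)}{m}\|f^*\|_{\cH}^2 + \tfrac{8\log^2(2n/\delta)}{9m^2}\|f^*\|_\infty^2$ dependence; this is just bookkeeping of the factor $1/3$ in Bernstein's inequality and the factor from the squared loss convention $\ell(y_1,y_2)$. Otherwise the proof is routine concentration together with the unbiased Monte Carlo choice of $\tilde{\ba}$.
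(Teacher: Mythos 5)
Your proposal is correct and follows the same route as the paper: take the Monte Carlo choice $\tilde{a}_j = a^*(\bb_j^0)$, apply Bernstein's inequality pointwise at each $\bx_i$ (with variance proxy $\|f^*\|_\cH^2$ and boundedness from $\|f^*\|_\infty$), union bound over the $n$ training points, and separately apply Hoeffding to $\frac{1}{m}\sum_j a^*(\bb_j^0)^2$ for the norm bound. The only cosmetic difference is that the paper controls $\max_i |f_m(\bx_i)-f^*(\bx_i)|$ directly and reads off the risk bound from there, rather than squaring and averaging via $(a+b)^2\leq 2a^2+2b^2$; both are standard bookkeeping and lead to the same quantitative statement.
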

\begin{proof}
Define the exception set:
\[
S_i(q) :=\Big\{\{\bb_j^0\}_{j=1}^m: \big|\frac{1}{m}\sum_{j=1}^m a(\bb_j^0)\varphi(\bx_i;\bb_j^0) - f^*(\bx_i)\big|\geq q\Big\}.
\]

Let $Z_{j}^i=a(\bb_j^0)\varphi(\bx_i;\bb_j^0)-f^*(\bx_i)$. Then  $\EE[Z_j^i]=0, \text{Var}[Z_j^i]\leq \|f\|_{\cH}^2$ and $|Z_j^i|\leq 2\|f\|_{\infty}$. By Bernstein inequality, we have
\[
\PP\{S_i(q)\}=\PP\Big\{|\frac{1}{m}\sum_{j=1}^m Z_j^i|\geq q\Big\}\leq 2\exp\left(-\frac{mq^2}{\|f\|_\cH^2+2\|f\|_\infty q/3}\right).
\]
Hence, 
\begin{align}
    \PP\{\hat{\CR}_n(\ba)\geq q^2\}\leq \sum_{i=1}^n \PP\{S_i(q)\}\leq 2n\exp\left(-\frac{mq^2}{\|f\|_\cH^2+2\|f\|_\infty q/3}\right).
\end{align}
In addition, by Hoeffding's  inequality and  $|a(\bb_j^0)|^2\leq \|f\|_\infty^2$, we have 
\begin{align}
    \PP\Big\{\big|\frac{1}{m}\sum_{j=1}^m a(\bb_j^0)^2 - \|f\|_\cH^2\big|\geq q\Big\}\leq 2\exp\left(-\frac{2mq^2}{\|f\|_{\infty}^4}\right).
\end{align}
Hence, taking $\tilde{\ba}=a(\Bb^0):=(a(\bb_1^0),\dots,a(\bb_m^0))^T$, we have with probability  $1-\delta$ over the random sampling of $\Bb^0$,
\begin{align}
    \hat{\CR}_n(a(\Bb^0))&\lesssim \frac{2\log(2n/\delta)}{m} \|f\|^2_\cH  +  \frac{8\log^2(2n/\delta)}{9m^2} \|f\|^2_{\infty},\\
    \frac{\|a(\Bb^0)\|^2}{m} &\leq \|f\|_\cH^2 + \sqrt{\frac{\log(2/\delta)}{2m}}\|f\|^2_{\infty}
\end{align}
\end{proof}
Combing Proposition \ref{thm: aposteriori} and Proposition \ref{pro: approx-empirical-error}, we have the following a priori estimates of the generalization error of  GD solutions. 

\begin{theorem}
For any $\delta\in (0,1)$, assume that $m\geq \log^2(n/\delta)$. With probability $1-\delta$, we have 
\[
\CR(\ba_{m^2t})\lesssim \left(\|f^*\|_\cH^2+\sqrt{\frac{\log(2/\delta)}{m}}\|f^*\|_\infty^2\right) \varepsilon(n,m,t,\delta,\|f^*\|_\infty),
\]
where 
\[
\varepsilon(n,m,t,\delta) = \frac{1}{mt}+\frac{1}{\sqrt{n}}\left(1+t\log(n/\delta)\right)\left(1+\sqrt{\log(n/\delta)}+\sqrt{\log(t\|f^*\|_\infty/\delta)} \right).
\]
\end{theorem}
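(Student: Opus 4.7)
The plan is to chain the two propositions in the natural way: use Proposition~\ref{pro: approx-empirical-error} to exhibit a good ``reference'' parameter $\tilde{\ba}=a^*(\Bb^0)$, use the convexity-based bounds in \eqref{eqn: GDF-random-feature} with this reference to control both the training error and $\|\ba_t\|$ along the gradient flow, and then substitute those controls into the a~posteriori bound of Proposition~\ref{thm: aposteriori}. The time rescaling $t\mapsto m^2t$ in the statement is precisely what is needed to make the $1/T$ term in \eqref{eqn: GDF-random-feature} combine with $\|\tilde{\ba}\|^2/m = O(\|f^*\|_\cH^2 + \cdots)$ to produce the $1/(mt)$ piece of $\varepsilon$.

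Concretely, I would proceed as follows. First, apply Proposition~\ref{pro: approx-empirical-error} at confidence level $\delta/2$ to obtain, with probability $1-\delta/2$ over the random features $\Bb^0\sim\pi^{\otimes m}$, the two estimates
\[
\hat{\CR}_n(\tilde{\ba})\lesssim \tfrac{\log(n/\delta)}{m}\|f^*\|_\cH^2+\tfrac{\log^2(n/\delta)}{m^2}\|f^*\|_\infty^2,\qquad \tfrac{\|\tilde{\ba}\|^2}{m}\lesssim \|f^*\|_\cH^2+\sqrt{\tfrac{\log(1/\delta)}{m}}\|f^*\|_\infty^2.
\]
Since $\hat{\CR}_n(\ba)$ is convex in $\ba$ (random-feature linear model with a squared loss), \eqref{eqn: GDF-random-feature} applies with $\bar\ba=\tilde\ba$. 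Evaluating at $T=m^2 t$ and dividing by $m$ gives
\[
\hat{\CR}_n(\ba_{m^2t})\le \frac{\|\tilde{\ba}\|^2}{2m^2 t}=\frac{\|\tilde\ba\|^2/m}{2mt},\qquad \frac{\|\ba_{m^2t}\|^2}{m}\le \frac{2\|\tilde\ba\|^2}{m}+2mt\,\hat{\CR}_n(\tilde\ba).
\]
Plugging in the two bounds from Proposition~\ref{pro: approx-empirical-error} and using $m\ge\log^2(n/\delta)$ to absorb the $\log^2(n/\delta)/m$ factor (it is dominated by $\log(n/\delta)$), this collapses to
\[
\hat{\CR}_n(\ba_{m^2t})\lesssim \tfrac{1}{mt}\bigl(\|f^*\|_\cH^2+\sqrt{\tfrac{\log(1/\delta)}{m}}\|f^*\|_\infty^2\bigr),\quad \tfrac{\|\ba_{m^2t}\|^2}{m}\lesssim (1+t\log(n/\delta))\bigl(\|f^*\|_\cH^2+\sqrt{\tfrac{\log(1/\delta)}{m}}\|f^*\|_\infty^2\bigr).
\]

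Next, apply Proposition~\ref{thm: aposteriori} at confidence level $\delta/2$; with probability $1-\delta/2$ over the sample,
\[
\CR(\ba_{m^2t})\lesssim \hat{\CR}_n(\ba_{m^2t})+\tfrac{\|\ba_{m^2t}\|^2/m+\|f^*\|_\cH^2}{\sqrt{n}}\bigl(1+\sqrt{\log((\|\ba_{m^2t}\|/\sqrt m+1)^2/\delta)}\bigr).
\]
Substituting the two displays from the previous step, the prefactor $\|f^*\|_\cH^2+\sqrt{\log(1/\delta)/m}\,\|f^*\|_\infty^2$ factors out of everything, while the remaining dependence on $(n,m,t,\delta,\|f^*\|_\infty)$ matches $\varepsilon$: the $1/(mt)$ comes from the training-error term, the $(1+t\log(n/\delta))/\sqrt n$ comes from $\|\ba_{m^2t}\|^2/m$ divided by $\sqrt n$, and the square-root-log factor, after noting that $\|\ba_{m^2t}\|/\sqrt m$ is polynomial in $t,\log(n/\delta),\|f^*\|_\infty$, splits cleanly as $1+\sqrt{\log(n/\delta)}+\sqrt{\log(t\|f^*\|_\infty/\delta)}$ by the elementary inequality $\sqrt{\log(AB)}\le \sqrt{\log A}+\sqrt{\log B}$. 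A union bound over the two $\delta/2$ events yields the stated probability.

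The routine part is really just the bookkeeping; the only genuinely delicate step is the simplification of the logarithmic factor in Proposition~\ref{thm: aposteriori}, where I need $\|\ba_{m^2t}\|^2/m$ in closed form to factor the final answer. This is where the assumption $m\ge\log^2(n/\delta)$ is essential: it lets me discard the $\|f^*\|_\infty^2\log^2(n/\delta)/m$ contribution to $\|\ba_{m^2t}\|^2/m$ that would otherwise spoil the clean form of $\varepsilon$, and it also controls the $\sqrt{\log(1/\delta)/m}$ term so that the leading prefactor remains $\|f^*\|_\cH^2+\sqrt{\log(2/\delta)/m}\|f^*\|_\infty^2$ as stated.
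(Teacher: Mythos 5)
Your proposal is correct and follows essentially the same route as the paper: take $\bar{\ba}$ from Proposition~\ref{pro: approx-empirical-error}, feed it into the convexity bounds \eqref{eqn: GDF-random-feature} evaluated at time $m^2t$, substitute the resulting controls on $\hat{\CR}_n(\ba_{m^2t})$ and $\|\ba_{m^2t}\|^2/m$ into Proposition~\ref{thm: aposteriori}, and use $m\gtrsim\log^2(n/\delta)$ together with $\|f^*\|_{\cH}\le\|f^*\|_\infty$ to absorb the subordinate terms. The only cosmetic difference is that you make the $\delta/2$ union bound explicit, which the paper leaves implicit.
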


\begin{proof}
Taking $\bar{\ba}$ be the solution constructed in Proposition \ref{pro: approx-empirical-error} and plugging into Eqn. \eqref{eqn: GDF-random-feature},  we then have
\[
\hat{\CR}_n(\ba_{m^2t}) \leq \frac{\|\bar{\ba}\|^2}{2m^2t}\lesssim \frac{\|f^*\|_\cH^2+\sqrt{\frac{\log(2/\delta)}{m}}\|f^*\|_\infty^2}{mt},
\]
and 
\begin{align*}
\frac{\|\ba_{m^2t}\|^2}{m}&\leq \frac{2\|\bar{\ba}\|^2}{m} + \frac{m^2t\hat{\CR}_n(\bar{\ba})}{m}\\
&\lesssim \|f\|_\cH^2+\sqrt{\frac{\log(2/\delta)}{m}} \|f\|_{\infty}^2 + mt\left(\frac{\log(2n/\delta)}{m} \|f\|_\cH^2  +  \frac{\log^2(2n/\delta)}{m^2} \|f\|_{\infty}^2\right)\\
&= \big(1+\log(n/\delta) t\big)\|f\|_\cH^2 + \left(\sqrt{\frac{\log(2/\delta)}{m}}+\frac{t\log^2(2n/\delta)}{m}\right)\|f\|_{\infty}^2\\
&= Q_{1} \|f\|_{\cH}^2 + Q_2 \|f\|_{\infty}^2,
\end{align*}
where $Q_1=1+\log(n/\delta) t, Q_2 = \sqrt{\log(2/\delta)/m}+ t\log^2(2n/\delta)/m$.
Plugging the above estimates into Eqn. \eqref{eqn: aposteriori}, we obtain 
\begin{align*}
\CR(\ba_{m^2t})&\lesssim 
    \hat{\CR}_n(\ba_{m^2t}) + \frac{\|\ba_{m^2t}\|^2/m+\|f^*\|^2_\cH}{\sqrt{n}}\left(1 + \sqrt{\log(2\|\ba_{m^2t}\|^2/m+1)/\delta)}\right)\\
    &\lesssim \frac{\|f^*\|_\cH^2+\sqrt{\frac{\log(2/\delta)}{m}}\|f^*\|_\infty^2}{mt} \\
    &\quad + \frac{1}{\sqrt{n}}((Q_1+1)\|f\|_\cH^2+Q_2\|f\|_\infty^2)\left(1+\sqrt{\log((Q_1+Q_2)\|f\|_\infty^2+1)/\delta)}\right)\\
    &\lesssim \|f^*\|_\cH^2 I_1 + \|f^*\|_\infty^2\sqrt{\frac{\log(2/\delta)}{m}} I_2,
\end{align*}
where in the second inequality we used the fact that $\|f\|_{\cH}\leq \|f\|_{\infty}$. 
Using the definition of $Q_1, Q_2$ and $m\gtrsim \log^2(n/\delta)$ gives us that 
\begin{align*}
    I_1&\leq \frac{1}{mt}+\frac{1}{\sqrt{n}}\left(1+t\log(n/\delta)\right)\left(1+\sqrt{\log(n/\delta)}+\sqrt{\log(t\|f^*\|_\infty/\delta)} \right)\\
    I_2 &\leq \frac{1}{mt} + \frac{1}{\sqrt{n}}\left(1+\frac{t\log^{3/2}(n/\delta)}{\sqrt{m}}\right)\left(1+\sqrt{\log(n/\delta)}+\sqrt{\log(t\|f^*\|_\infty/\delta)} \right).
\end{align*}
Moreover,  it follows from  $m\gtrsim \log^2(n/\delta)$ that $I_2\lesssim I_1$. 
This completes the proof.
\end{proof}

\subsection{Analyzing the continuous model}

The approach presented above is the standard approach in machine learning theory.
It works since the loss functional is convex in this case.  It is difficult to generalize this to more complicated situations
due to the lack of convexity.
Here we explore an alternative approach by studying  the continuous problem.
Our hope is that some of the PDE techniques can be leveraged to help our understanding. 
{ One such example is found in \cite{chizat2018global}, which proves a global convergence  result for the gradient flow for two-layer neural networks 
by analyzing the PDE.}

We decompose the generalization error as follows, 
\begin{align}\label{error}
  \CR(\hat{f}_{m,n,t}) &=  \CR(\hat{f}_{m,n,t}) -  \CR(\hat{f}_{\infty,n,t})  \\
\label{eqn: conti-gen-bound-2}     & +  \CR(\hat{f}_{\infty,n,t}) -  \hat{\CR}_n(\hat{f}_{\infty,n,t}) \\
\label{eqn: conti-gen-bound-3}     & +  \hat{\CR}_n(\hat{f}_{\infty,n,t}),
\end{align}
{where $f_{\infty, n, t}$ is the solution given by the gradient flow of the continuous model.}
The three terms are respectively the discretization error, the generalization gap and the training error
(for the continuous problem). The latter two terms require a priori estimates of the gradient flow. 

Consider the random feature model, the gradient flow is given by 
\[
    \partial_t a(\bw,t) = - \frac{\delta \hat{\CR}_n}{\delta a}.
\]
Similar to above, we define 
\[
J(t) := t(\hat{\CR}_n(a_t)-\hat{\CR}_n(a^*)) + \frac{1}{2}\|a_t-a^*\|_{L^2(\pi)}^2.
\]
Then  we have
\begin{align}
    \frac{dJ(t)}{dt} &= - t \|\frac{\delta \hat{\CR}_n}{\delta a}\|_{L^2(\pi)}^2 +  \hat{\CR}_n(a_t)-\hat{\CR}_n(a^*) + \langle a_t -a^*, -\frac{\delta \hat{\CR}_n}{\delta a}\rangle_{L^2(\pi)}\\
    &\leq - t \|\frac{\delta \hat{\CR}_n}{\delta a}\|_{L^2(\pi)}^2\leq 0,
\end{align}
where the second inequality follows from the convexity of $\CR_n$ with respect to $a$.
It follows that 
\[
     t(\hat{\CR}_n(a_t)-\hat{\CR}_n(a^*)) + \frac{1}{2}\|a_t-a^*\|_{L^2(\pi)}^2 \leq \frac{1}{2}\|a_0-a^*\|_{L^2(\pi)}^2.
\]
Since $\ba_0=0$ and $\hat{\CR}_n(a^*)=0$, we get
\begin{align}
    \hat{\CR}_n(a_t) &\leq \frac{\|a^*\|_{L^2(\pi)}^2}{2t} \\
    \|a_t\|_{L^2(\pi)}&\leq 2\|a^*\|_{L^2(\pi)}.
\end{align}
Let $c=2\|a^*\|_{L^2(\pi)}$.
Then the function $\hat{f}_{\infty, n,t}$ must lie in  $\cF_c = \{f_{\infty}(\cdot;a): \|a\|_{L^2(\pi)}\leq c\}$, with  $f_{\infty}(x;a):=\int a(\bb)\varphi(\bx;\bb)d\pi(\bb)$.
Let $\cH_c = \{(f_{\infty}(\cdot;a)-f^*)^2: \|a\|_{L^2(\pi)}\leq c\}$. By the contraction property of Rademacher complexity, we have 
\[
\rad_n(\cH_c)\leq 4c \rad_n(\cF_c)\leq \frac{8c^2}{\sqrt{n}}.
\]
Moreover, $|h|\leq 4 c^2$ for any $h\in \cH_c$.  

Following Eqn. \eqref{eqn: conti-gen-bound-2} \eqref{eqn: conti-gen-bound-3} and using the Rademacher complexity-based bound,  we have 
\begin{align}
    \CR(\hat{f}_{\infty,n,t}) = \CR(a_t) & \leq \hat{\CR}_n(a_t) + 2\rad_n(\cH_c) + 4c^2 \sqrt{\frac{2\log(2/\delta)}{n}}\\
   & \lesssim \frac{\|a^*\|_{L^2(\pi)}^2}{2t} + \frac{ \|a^*\|_{L^2(\pi)}^2}{\sqrt{n}} (1+\sqrt{\log(2/\delta)})\\
    &= \frac{\|f^*\|_{\cH}^2}{2t} + \frac{(1+\sqrt{\log(2/\delta)}) \|f^*\|_{\cH}^2}{\sqrt{n}}
\end{align}

The treatment of the discretization error in \eqref{error} is more complex.
This requires substantial machinery in numerical analysis. We will postpone this to future publications.

\section{An example}
In this section, we study a simple $1$-dimensional case of the integral
transform-based model proposed in Section~\ref{ssec:transform}. 
Specifically, we consider the following conservative gradient flow,
\begin{equation}\label{eqn: 1d_meanfield}
    \frac{\partial \rho_t}{\partial t} = \nabla\cdot \left(\rho_t \nabla \int K(w, w')(\rho_t(dw')-\rho^*(dw'))\right),
\end{equation}
where $w\in[0, 2\pi]$, $\rho^*$ is a fixed probability distribution that
determines the target function:
$$
f^*(x) = \int_0^{2 \pi} \sigma(\cos(w - x)) \rho^*(dw)
$$
$\rho_t$ obeys the periodic boundary condition. 
$K$ is given by
$$
K(w, w') = \frac 1{2\pi} \int_0^{2 \pi} \sigma(\cos(w - x)) \sigma(\cos(w'-x)) dx
$$
It is easy to see that $K$ can be written as
$K = K(w-w')$ . Hence~\eqref{eqn: 1d_meanfield} can be written 
in a convolutional form,
\begin{equation}\label{eqn: 1d_meanfield2}
    \frac{\partial \rho_t}{\partial t} = \nabla\cdot \left(\rho_t \nabla K*(\rho_t-\rho^*)\right).
\end{equation}
{In the following analysis we consider the case where} $K$ is  positive definite, i.e.,
\begin{equation}
    \int (K*\nu) \nu(dw)>0
\end{equation}
holds for any { measure} $\nu$.
This condition is easily satisfied in practice.
In addition, we assume that $K$ is {three-times differentiable and its derivatives are bounded}.

\subsection{Global convergence for uniform target distribution}

First, we study the situation when $\rho^*$ is uniform.
In this case, one can prove global convergence
of the gradient flow~\eqref{eqn: 1d_meanfield}. 
(or free energy of~\eqref{eqn: 1d_meanfield}).

\begin{theorem}\label{thm: stationary}
Assume $\rho^*$ is the uniform distribution. 
{Let $\rho_t$ be the solution of~\eqref{eqn: 1d_meanfield2} initialized from $\rho_0$. Assume that $\rho_0$ has differentiable density function, then we have $\lim_{t\rightarrow\infty}W_2(\rho_t,\rho^*) = 0$.}
\end{theorem}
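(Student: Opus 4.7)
The plan is to use the quadratic free energy
\[
E(\rho) \;:=\; \tfrac{1}{2}\int\!\!\int K(w-w')\,(\rho-\rho^*)(dw)\,(\rho-\rho^*)(dw')
\]
as Lyapunov functional; positive definiteness of $K$ gives $E(\rho)\geq 0$ with equality iff $\rho=\rho^*$. Differentiating along \eqref{eqn: 1d_meanfield2} and integrating by parts on the torus produces the standard dissipation identity
\[
\frac{d}{dt}E(\rho_t) \;=\; -\int_0^{2\pi} \rho_t(w)\,|\partial_w K*(\rho_t-\rho^*)(w)|^2\,dw \;=:\; -D(\rho_t)\;\leq\; 0.
\]
Since $E(\rho_t)$ is nonnegative and nonincreasing it converges to some $E_\infty\geq 0$, and $\int_0^\infty D(\rho_t)\,dt\leq E(\rho_0)<\infty$, so there is a sequence $t_n\to\infty$ along which $D(\rho_{t_n})\to 0$.

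Next I would extract a weak limit $\rho_{t_n}\rightharpoonup\rho_\infty$ (automatic by tightness on the compact torus) and carry out a LaSalle-type identification to show $\rho_\infty=\rho^*$. The essential input is propagation of regularity: since $\partial_w K$ and $\partial_w^2 K$ are bounded by hypothesis, the velocity field $v_t:=-\partial_w K*(\rho_t-\rho^*)$ is uniformly $C^1$ in $w$, so the Lagrangian flow $\Phi_t$ solving $\dot{w}=v_t(w)$ is a $C^1$ diffeomorphism of the torus and $\rho_t=(\Phi_t)_\#\rho_0$ remains a differentiable density, strictly positive on $[0,2\pi]$. Granted a uniform-in-$t$ positive lower bound on $\rho_t$, the weak limit $\rho_\infty$ inherits full support, and $D(\rho_{t_n})\to 0$ together with lower semicontinuity forces $\partial_w K*(\rho_\infty-\rho^*)\equiv 0$ on $[0,2\pi]$. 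Hence $K*(\rho_\infty-\rho^*)$ is a constant; integrating over $[0,2\pi]$ and using the translation invariance of $K$ together with $\int(\rho_\infty-\rho^*)\,dw=0$ shows this constant equals zero, and positive definiteness of $K$ then yields $\rho_\infty=\rho^*$. Uniqueness of the $\omega$-limit gives $\rho_t\rightharpoonup\rho^*$, which on the compact torus is equivalent to $W_2(\rho_t,\rho^*)\to 0$.

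The hard part is the uniform-in-time density lower bound needed to transfer positivity to the limit. A naive Gronwall estimate only gives $\inf_w \rho_t \geq \exp(-\int_0^t\|\partial_w v_s\|_\infty\,ds)\inf_w \rho_0$, and the integrand is controlled only by $\|\partial_w^2 K\|_\infty\|\rho_s-\rho^*\|_{TV}$, which need not be integrable a priori. To close this, I would exploit that the Fourier coefficients $\hat{K}_k$ are strictly positive so that $E(\rho_t)$ controls a negative Sobolev norm of $\rho_t-\rho^*$; interpolating against the Sobolev bounds on $K$ then yields $\|\partial_w v_s\|_\infty\lesssim E(\rho_s)^\alpha$ for some $\alpha>0$, which combined with a Poincaré-type inequality $D(\rho_t)\geq c\,E(\rho_t)$ (valid as long as the density is bounded above and below by positive constants) would give exponential decay of $E$ and hence integrability of $\|\partial_w v_s\|_\infty$. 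Managing the resulting bootstrap between the density bound and the decay of $E$ is the main technical obstacle; alternatively one could try to prove directly a quantitative inequality $D\geq c\,E^{1+\beta}$ that forces $E(\rho_t)\to 0$ without ever identifying $\rho_\infty$.
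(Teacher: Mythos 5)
Your proposal uses a genuinely different Lyapunov functional than the paper does, and the difference is not cosmetic: it is exactly what creates the gap you yourself flag at the end. You take the quadratic risk $E(\rho)=\tfrac12\iint K(w-w')(\rho-\rho^*)(dw)(\rho-\rho^*)(dw')$, whose Wasserstein dissipation is $D(\rho_t)=\int \rho_t\,|\partial_w K*(\rho_t-\rho^*)|^2\,dw$. This dissipation carries the factor $\rho_t$ inside the integral, so $D=0$ is compatible with $\nabla V\neq 0$ on the set where $\rho_t$ vanishes. Closing that hole requires a uniform-in-time positive lower bound on the density, and the bootstrap you sketch (negative-Sobolev control of $\|\partial_w v_s\|_\infty$ plus a Poincar\'e-type inequality $D\gtrsim E$) is circular as stated, since the Poincar\'e step itself presupposes two-sided density bounds. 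You never complete it, and I do not see an obvious way to; this is a real gap, not a routine detail.

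The paper avoids this entirely by choosing a different Lyapunov functional, the relative entropy $\mathcal{H}(\rho_t|\rho^*)=\int\rho_t\log(\rho_t/\rho^*)\,dw$. Because $\rho^*$ is uniform, $\nabla\log\rho^*\equiv 0$ and the time derivative collapses after one integration by parts to $\int\Delta K*(\rho_t-\rho^*)\,\rho_t\,dw$, which in Fourier reads $-\sum_{k\neq 0}k^2\hat K(k)|\hat\rho_t(k)|^2$. The density weight $\rho_t$ is absorbed into $\overline{\hat\rho_t(k)}$, producing a manifestly nonpositive quadratic form in the Fourier gap that vanishes \emph{iff} $\rho_t=\rho^*$, with no density lower bound needed. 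The LaSalle step then goes through on the compact sublevel sets of $\mathcal{H}$ in $W_2$. This is precisely where the hypothesis that $\rho^*$ is uniform enters, and it enters in an essential way.

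That last point is a useful sanity check on your attempt: nowhere in your argument do you actually use that $\rho^*$ is uniform. If your proof compiled, it would give global convergence for arbitrary $\rho^*$ with positive-definite $K$, which is strictly stronger than the paper's result; the paper in fact drops to a \emph{local} convergence statement in the next subsection for general $\rho^*$. That mismatch is a strong signal that the missing density bound is not merely a technicality but the crux, and that switching from the natural free energy $E$ to the relative entropy is what makes the uniform case tractable.
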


\begin{proof}
By an abuse of notation, we let $\rho_t$ and $\rho^*$ be the density function of $\rho_t$ and $\rho^*$, respectively. {First, we assume $\rho_t$ exists and has differentiable density function. 
For any probability distribution $\rho$, consider the relative entropy of $\rho$ and $\rho^*$,
\begin{equation}
    \cH(\rho|\rho^*):=\int \rho\log\left(\frac{\rho}{\rho^*}\right)dw
\end{equation}
Taking the time derivative, we have
\begin{align}
\frac{d}{dt}\cH(\rho_t|\rho^*) &=\int \partial_t\rho_t (\log\rho_t+1-\log\rho^*)dw \nonumber\\
  &= \int \nabla\cdot \left(\rho_t \nabla K*(\rho_t-\rho^*)\right)(\log\rho_t+1-\log\rho^*)dw \nonumber \\
  &= -\int \nabla K*(\rho_t-\rho^*)\nabla\rho_t dw \nonumber \\
  &= \int \Delta K*(\rho_t-\rho^*)\rho_t dw. \label{eqn: rel_ent}
\end{align}
Let $\hat{K}(k)$, $\hat{\rho}(k)$, $\hat{\rho}^*(k)$ be the coefficients of the Fourier series of $K$, $\rho$ and $\rho^*$, respectively. The Fourier expansions exist due to the differentiability assumptions on $K$ and $\rho_t$. By~\eqref{eqn: rel_ent} we have
\begin{align}
\frac{d}{dt}\cH(\rho_t|\rho^*) &= -\sum\limits_{k\in\ZZ} k^2\hat{K}(k)\hat{\rho}_t(k)(\hat{\rho}_t(k)-\hat{\rho}^*(k)) \nonumber \\
  &= -\sum\limits_{k\in\ZZ}k^2\hat{K}(k)(\hat{\rho}_t(k)-\hat{\rho}^*(k))(\hat{\rho}_t(k)-\hat{\rho}^*(k)) \label{eqn: rel_ent2}
\end{align}
The second equality of~\eqref{eqn: rel_ent2} holds because $\rho^*$ is the uniform distribution, which gives $\hat{\rho}^*(k)=0$ for all $k\neq0$. Since $K$ is positive definite, we have $\hat{K}(k)>0$ for all $k\in\ZZ$. Hence, for any $\rho\neq\rho^*$, we have
\begin{equation}
    \frac{d}{dt}\cH(\rho_t|\rho^*) < 0.
\end{equation}
Therefore, $\cH(\rho|\rho^*)$ is a Lyapunov function for the dynamics~\eqref{eqn: 1d_meanfield2}. Since the set of probability distributions $\rho$ on $[0,2\pi]$ is compact in the space $W_2$, any sublevel set of $\cH$ is compact in $W_2$. Hence, the trajectory $\rho_t$ converges to the set where $\frac{d}{dt}\cH(\rho_t|\rho^*)=0$. By~\eqref{eqn: rel_ent2}, this set contains only $\rho^*$.
This proves the statements in the theorem.

We next prove the existence and boundedness of $\partial_w\rho_t$. 
The existence and uniqueness of the solution of \eqref{eqn: 1d_meanfield2} can be proved in 
the same way as in~\cite{chizat2018global}. Therefore we only provide the main ideas here.

Taking the partial derivative with respect to $w$ 
on both sides of~\eqref{eqn: 1d_meanfield2}, and noting that $w\in\bR$, we get
\begin{equation}
\partial_t\partial_w\rho_t = \partial_w\left(\partial_w\rho_t\partial_w K*(\rho_t-\rho^*)\right)+\partial_w\left(\rho_t\partial^2_{ww}K*(\rho_t-\rho^*)\right).
\end{equation}
Hence, $\partial_w\rho_t$ is the solution of the following linear hyperbolic PDE for $u(w,t)$:
\begin{equation}
\partial_t u = \partial_w(u\partial_wK*(\rho_t-\rho^*))+u\partial^2_{ww}K*(\rho_t-\rho^*)+\rho_t\partial^3_{www}K*(\rho_t-\rho^*).
\end{equation}
By the conditions on $K$, the coefficients of the PDE above are uniformly bounded. 
Now it follows from standard PDE argument that $u$ is bounded for any finite interval of time
$[0, T]$.
}
\end{proof}


\subsection{Local convergence for the general case}

The previous global convergence result only holds for the case when $\rho^*$ is uniform. 
The next result shows that as long as $\rho_0$ is initialized close to $\rho^*$, 
the gradient flow converges to the global minimum with an $\mathcal{O}(1/t)$ rate.

\begin{theorem}
Assume the conditions of Theorem~\ref{thm: stationary} hold. Furthermore
 assume that there are constants $C_0$, $C_1$ and $C^*$ such that 
\begin{equation}\label{eqn: cond_K}
    \frac{C_0}{|k|}\leq\hat{K}(k)\leq\frac{C_1}{|k|}, {\textrm{\ and } |\hat{\rho}^*(k)|\leq\frac{C^*}{k^2}}
\end{equation}
hold for any $k\neq0$. 
Let $C$ and $t_0$ be two constants that satisfy
\begin{equation}\label{eqn: cond_local}
     { \frac{C_0t_0}{2\pi C}-32C_1(C^*t_0+C)>1,}
\end{equation}
and 
assume that $\rho_0$ satisfies 
\begin{equation}
    {|\hat{\rho}_0(k)-\hat{\rho}^*(k)|<\frac{C}{|k|^2t_0}}
\end{equation}
for any $k\neq0$.
Then we have
\begin{equation}\label{eqn: rho_bound}
    {|\hat{\rho}_t(k)-\hat{\rho}^*(k)|\leq\frac{C}{|k|^2(t+t_0)}}
\end{equation}
for any $t\geq 0$.
\end{theorem}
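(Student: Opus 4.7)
My plan is to work in Fourier space and run a continuous-induction (bootstrap) argument on the ansatz \eqref{eqn: rho_bound}. Set $a_k(t):=\hat\rho_t(k)-\hat\rho^*(k)$ for $k\neq 0$ and note that $a_0(t)\equiv 0$ because $\rho_t,\rho^*$ are both probability measures. Taking Fourier coefficients of \eqref{eqn: 1d_meanfield2} turns it into a coupled ODE system
\[
\dot a_k(t)\;=\;-\lambda_k\,a_k(t)\;-\;\kappa\,k\!\!\sum_{j\neq 0,k}\! j\,\hat K(j)\,\hat\rho_t(k-j)\,a_j(t),
\]
with a diagonal decay rate $\lambda_k=\kappa\,k^2\hat K(k)\gtrsim C_0|k|$ (the convention-dependent constant $\kappa$ comes from $\hat\rho_t(0)$, which is conserved in time). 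Splitting the sum via $\hat\rho_t(k-j)=\hat\rho^*(k-j)+a_{k-j}(t)$ separates the coupling $N_k$ into a piece linear in $a$ (weighted by $\hat\rho^*$) and a genuinely quadratic piece, both of which will be controlled by the same convolution estimate.

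Define $T^\star$ as the supremum of $T\ge 0$ on which \eqref{eqn: rho_bound} holds; continuity and the initial hypothesis give $T^\star>0$, and I would suppose $T^\star<\infty$ for contradiction. Then at $t=T^\star$ some mode $k^\star$ attains equality, $|a_{k^\star}|=C/((k^\star)^2(T^\star+t_0))$. From $\tfrac{d}{dt}|a_k|^2=2\mathrm{Re}(\bar a_k\dot a_k)$ one has $\tfrac{d}{dt}|a_k|\le-\lambda_k|a_k|+|N_k|$, and since $\tfrac{d}{dt}\bigl[C/(k^2(t+t_0))\bigr]=-C/(k^2(t+t_0)^2)$, the contradiction reduces to the strict inequality
\[
\lambda_{k^\star}\,\frac{C}{(k^\star)^2(T^\star+t_0)}\;>\;|N_{k^\star}(T^\star)|\;+\;\frac{C}{(k^\star)^2(T^\star+t_0)^2}.
\]

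The analytic heart of the argument is an estimate on $|N_k|$ under the bootstrap hypothesis. Using $|\hat K(j)|\le C_1/|j|$, $|\hat\rho^*(k-j)|\le C^*/(k-j)^2$, $|a_\ell|\le C/(\ell^2(t+t_0))$, together with the elementary convolution bound
\[
\sum_{j\in\ZZ,\,j\neq 0,k}\frac{1}{j^2(k-j)^2}\;\le\;\frac{K_0}{k^2}
\]
(obtained by splitting the sum according to $|j|\le|k|/2$ and $|j|>|k|/2$, for an absolute constant $K_0$), one reaches an estimate of the form $|N_k(t)|\lesssim C_1\,C\bigl(C^*(t+t_0)+C\bigr)\big/\bigl(|k|(t+t_0)^2\bigr)$. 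Substituting this into the desired strict inequality and using $|k^\star|\ge 1$ reduces everything to a purely numerical condition on $C_0,C_1,C^*,C,t_0$ that is monotone in $t$, so it suffices to check it at $t=0$; this is exactly the content of \eqref{eqn: cond_local}, which closes the bootstrap and hence establishes \eqref{eqn: rho_bound} for all $t\ge 0$.

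The main obstacle I anticipate is constant-tracking: to land on the specific factor $32$ appearing in \eqref{eqn: cond_local} one needs a sufficiently sharp value of $K_0$ in the convolution estimate, and one must carefully combine the linear-in-$a$ and genuinely quadratic pieces of $N_k$ so that the nonlinear cost consolidates into precisely the factor $C^*(t+t_0)+C$. A secondary technical point is justifying the formal Fourier-mode ODEs rigorously—one needs enough regularity of $\rho_t$ for its Fourier series to converge absolutely and for the product/convolution identities to be valid; this can be arranged by propagating bounds on $\partial_w\rho_t$ via the linear hyperbolic PDE used at the end of the proof of Theorem~\ref{thm: stationary}, whose coefficients are bounded by our assumptions on $K$.
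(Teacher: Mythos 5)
Your proposal matches the paper's proof essentially line for line: both pass to the Fourier-mode ODEs for $u_t=\rho_t-\rho^*$, isolate the diagonal dissipative term $-k^2\hat K(k)\hat u_t(k)$ and bound the off-diagonal coupling by splitting $\hat\rho_t(k-l)=\hat\rho^*(k-l)+\hat u_t(k-l)$, use the same convolution estimate $\sum_{l\neq 0,k}|l|^{-2}|k-l|^{-2}\leq 32/k^2$ obtained by splitting at $|l|\sim|k|/2$, and close via the condition~\eqref{eqn: cond_local}. Your ``continuous induction / first exit time $T^\star$'' phrasing is just the contrapositive of the paper's ``$\cX$ is invariant because the vector field points inward at the boundary,'' so the two arguments are the same.
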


\begin{proof}
Let $u_t=\rho_t-\rho^*$.  By the conditions we imposed, $\hat{u}_t(0)=0$, and 
\begin{equation}
    |\hat{u}_0(k)| \leq \frac{C}{2|k|^2t_0},
\end{equation}
for any $k\neq 0$.
From equation~\eqref{eqn: 1d_meanfield2}, the dynamics of $u_t$ is 
\begin{align}\label{eqn: dyn_u}
\frac{\partial u_t}{\partial t} &=\nabla\cdot((u_t+\rho^*)\nabla K*u_t),
\end{align}
Writing~\eqref{eqn: dyn_u} in the Fourier space, we get
{ \begin{align}
\frac{d}{dt}\hat{u}_t(k) &= -\sum\limits_{l=-\infty}^\infty kl(\hat{u}_t(k-l)+\hat{\rho}^*(k-l))\hat{K}(l)\hat{u}_t(l) \nonumber \\
  &= -k^2\hat{K}(k)\hat{u}_t(k)-\sum\limits_{l\neq k}kl(\hat{u}_t(k-l)+\hat{\rho}^*(k-l))\hat{K}(l)\hat{u}_t(l), \label{eqn: u_fourier}
\end{align}
for any $k\in\ZZ$. 
Next, we consider the set 
\begin{equation}
   \cX := \left\{ (x(k))_{k=-\infty}^\infty: x(0)=0, \textrm{and } |x(k)|\leq\frac{C}{|k|^2(t+t_0)} \textrm{for }k\neq0 \right\},
\end{equation}
and show that this is an invariant set for the dynamics, i.e.
trajectories $(\hat{u}_t(k))$ initialized inside of $\cX$ will not escape from $\cX$. To 
prove this, assume that $(\hat{u}_t(\cdot))$ is at  the boundary of $\cX$, which means there exists a non-empty set $\mathcal{K}$ such that for any $k\in\mathcal{K}$ we have
\begin{equation}
    |\hat{u}_t(k)|=\frac{C}{|k^2|(t+t_0)}.
\end{equation} 
Then, for any $k\in\mathcal{K}$, 
by~\eqref{eqn: u_fourier} we have}
{\begin{align}
\frac{d}{dt}\hat{u}_{t}(k) &\leq -\frac{C_0C|k|^2}{2\pi|k||k|^2(t+t_0)}+\sum\limits_{l\neq0,k} \frac{C_1C|kl|}{|l||l^2|(t+t_0)}\left(\frac{C^*}{|k-l|^2}+\frac{C}{|k-l|^2(t+t_0)}\right) \nonumber \\
  &= -\frac{C_0C}{2\pi|k|(t+t_0)}+\frac{C_1C|k|}{(t+t_0)}\left(C^*+\frac{C}{t+t_0}\right)\sum\limits_{l\neq0, k}\frac{1}{|l^2||k_0-l|^2}. \label{eqn: contra2}
\end{align}}
For the second term on the right hand side of~\eqref{eqn: contra2}, we have
\begin{align}
\sum\limits_{l\neq0, k}\frac{1}{|l|^2|k-l|^2} &\leq 2\sum\limits_{l\geq[k/2], l\neq k}\frac{1}{|l|^2|k-l|^2} \nonumber \\
  &\leq \frac{16}{k^2}\sum\limits_{l=1}^\infty \frac{1}{l^2} \nonumber \\
  &\leq \frac{32}{k^2}. 
\end{align}
Going back to~\eqref{eqn: contra2} we have
{\begin{align}
\frac{d}{dt}\hat{u}_{t}(k) &\leq -\frac{C}{|k|^2(t+t_0)^2}\left( \frac{C_0|k|(t+t_0)}{2\pi C}-32C_1C^*(t+t_0)|k|-32C_1C|k| \right) \nonumber \\
  &\leq -\frac{C}{|k|(t+t_0)^2}\left(\frac{C_0t_0}{2\pi C}-32C_1C^*t_0-32C_1C\right), \label{eqn: contra3}
\end{align}
where the second inequality holds as a consequence of the condition~\eqref{eqn: cond_local}, which implies
\begin{equation}
    \frac{C_0}{2\pi C}-32C_1C^* > 0.
\end{equation}}
Use again condition~\eqref{eqn: cond_local} together with~\eqref{eqn: contra3}, we have
\begin{equation}\label{eqn: point_inside}
    \frac{d}{dt}\hat{u}_{t}(k) < -\frac{C}{|k|(t+t_0)^2} = \frac{d}{dt}\frac{C}{|k^2|(t+t_0)}.
\end{equation}
{Since~\eqref{eqn: point_inside} holds for any $k\in\mathcal{K}$, the vector field at $(\hat{u}_t(k))$ points inside $\cX$. Therefore, the trajectory $\{(\hat{u}_t(k)): t\geq0\}$ stays in $\cX$ for any $t>0$, which completes the proof.}
\end{proof}

\begin{remark}
The theorem above shows local convergence of the gradient descent dynamics 
with $\mathcal{O}(1/t)$ rate. For simplicity of the proof we assumed that the Fourier coefficients 
of $K$ decays with an $\mathcal{O}(1/|k|)$ rate. This condition is inessential and can be relaxed, 
at the expense of a faster decay rate imposed on $\hat{\rho}^*(k)$ and $\hat{\rho}_0(k)$. 
\end{remark}

\subsection{Numerical results}
A pseudo-spectrum method is implemented  to
numerically solve the equation~\eqref{eqn: 1d_meanfield2}.  
Specifically, we consider a $1$-D model 
\begin{equation}\label{eqn: model}
    f(x) = \int_{0}^{2\pi}\varphi(x, w)\rho(dw),
\end{equation}
with the feature $\varphi(\bx, w)$ given by
\begin{equation}\label{eqn: feature}
    \varphi(x, w) = \sum\limits_{k=-\infty}^{\infty}e^{-\frac{(x-w-2k\pi)^2}{h^2}},
\end{equation}
where $h$ is the standard deviation, and $x, w\in[0,2\pi]$. 
It is easy to see that the summation in~\eqref{eqn: feature} is finite for any $x$ and $w$, and $\varphi(x, w)$ is $2\pi$-periodic for both $x$ and $w$. 
A direct calculation gives:
\begin{equation}\label{eqn: kernel}
K(w,w') = \frac{1}{2\pi}\int_0^{2\pi}\varphi(x,w)\varphi(x,w')d x = \frac{h}{\sqrt{8\pi}}\sum\limits_{k=-\infty}^\infty e^{-\frac{(w-w'+2k\pi)^2}{2 h^2}}.
\end{equation}
In the experiments, we take $h=1$, and $\rho^*$ to be
\begin{equation}
    {\rho^*(w) = \frac{1}{2\pi}(1+0.2\times \sin(w)+0.8\times \sin(3w)).}
\end{equation}
The target function $f^*$ is displayed
in the left panel of Figure~\ref{fig: 1d_result}. 
We see that  this simple function contains three components: 
a mean value, a low frequency part (generated by $sin(w)$ in $\rho^*$), 
and a high frequency part (generated by $\sin(3s)$ in $\rho^*$). 
We take $\rho_0$ to be the uniform distribution on $[0,2\pi]$, and solve \eqref{eqn: 1d_meanfield2} 
for $10^4$ time units. The error between $f_{\rho_t}$ and $f^*$ along the path
is shown in the right panel of Figure~\ref{fig: 1d_result}. 
We see that the dynamics proceeds in three different regimes:
a nearly flat regime initially, followed by two faster regimes.
This is related to the so-called frequency principle discussed next.

\begin{figure}
    \centering
    \includegraphics[width=0.40\textwidth]{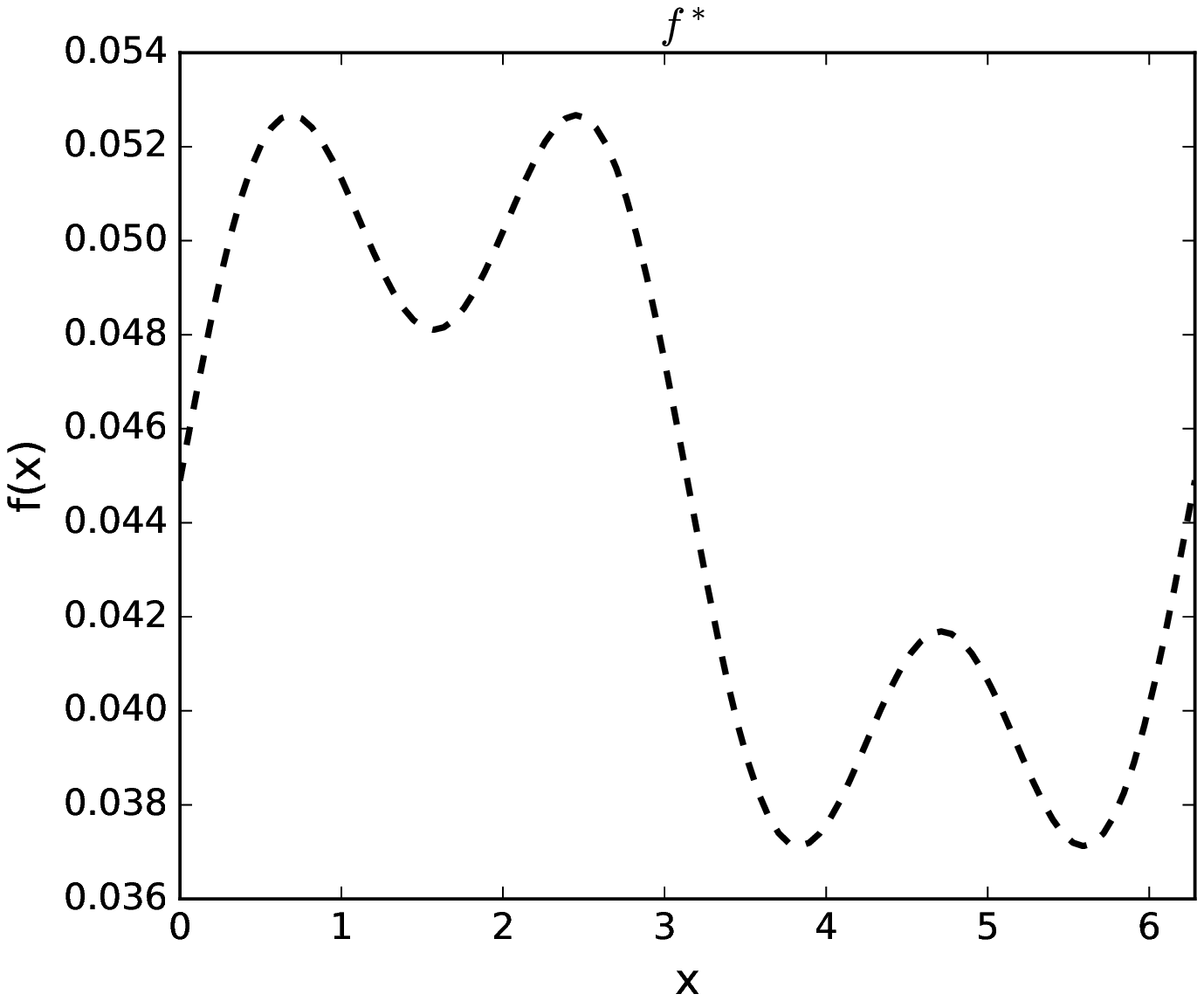}
    \includegraphics[width=0.40\textwidth]{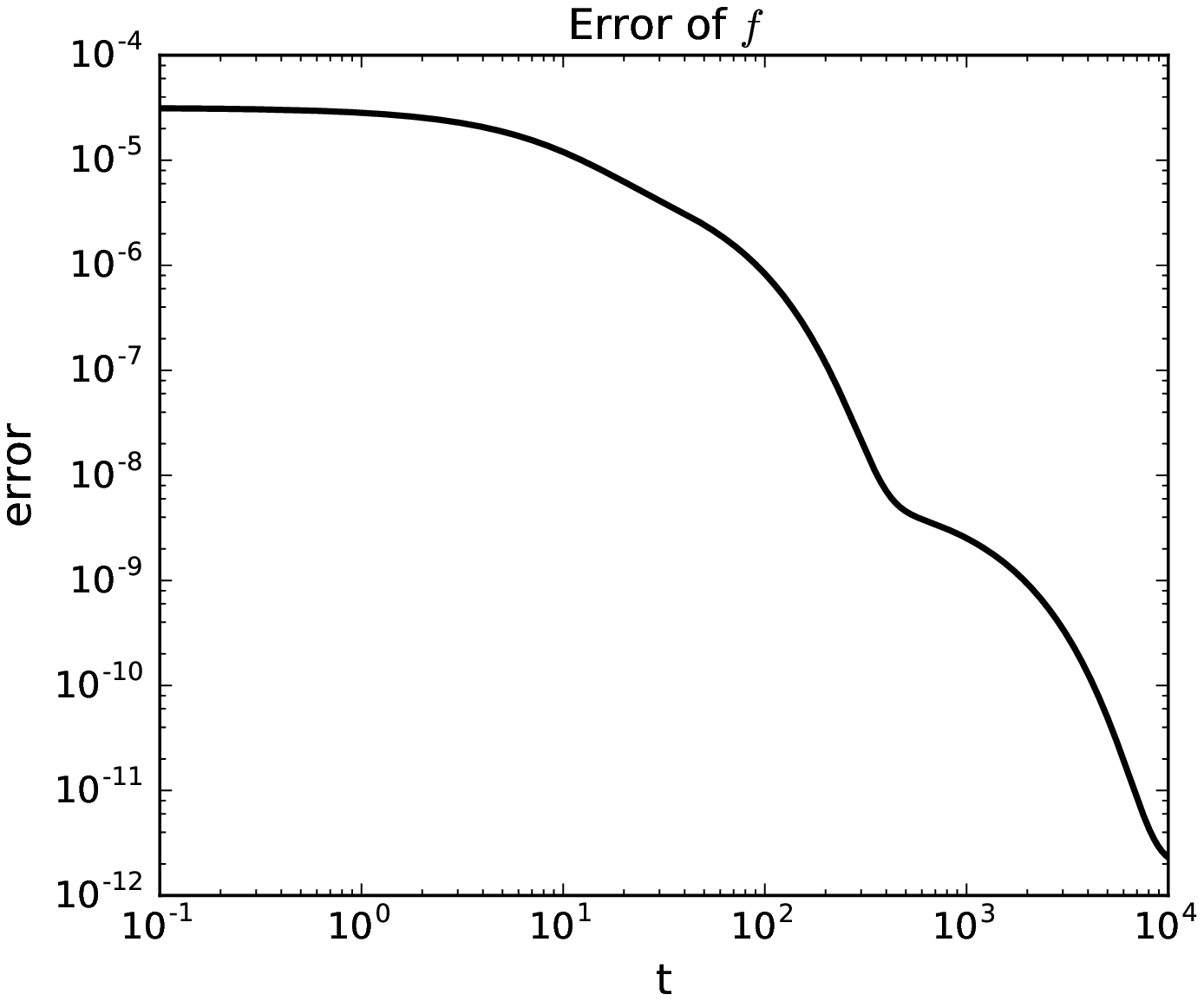}
    \vspace*{-3mm}
    \caption{\small Left: the target function $f^*$; Right: the error $\|f-f^*\|^2$ along the path. Some numerical details: $101$ Fourier components are used in the pseudo-spectral method, 
with a time step size $dt=0.01$. $4$-th order Runge-Kutta method is used for time integration.}
    \label{fig: 1d_result}
\end{figure}

 It is interesting to study the analog of the empirical risk, 
defined using the kernel:
\begin{equation}
    K_n(w,w') = \frac{1}{n}\sum\limits_{i=1}^n \varphi(x_i,w)\varphi(x_i,w')
\end{equation}
where $\{x_i\}$ is a set of data samples.
We take $n=100$ and sample the $x_i$'s from the uniform distribution on $[0,2\pi]$. The results, 
presented in Figure~\ref{fig: 1d_empirical}, suggest that the empirical loss converges to $0$, and 
the $L^2$ norm of the density function $\rho_t(w)$ stays bounded.
As was argued in the previous section, under this circumstance, the generalization error
is bounded by $C/\sqrt{n}$.

\begin{figure}
    \centering
    \includegraphics[width=0.40\textwidth]{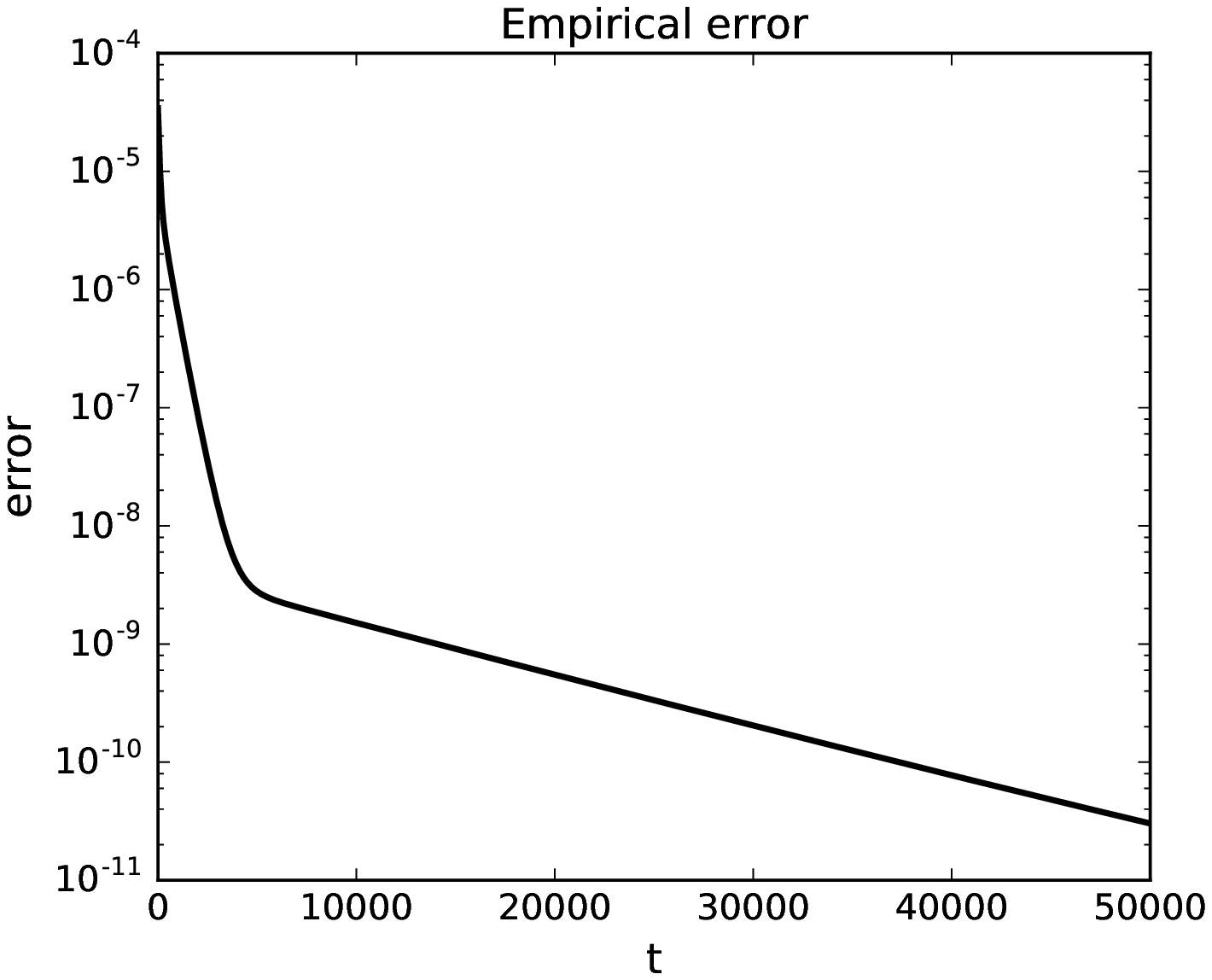}
    \includegraphics[width=0.40\textwidth]{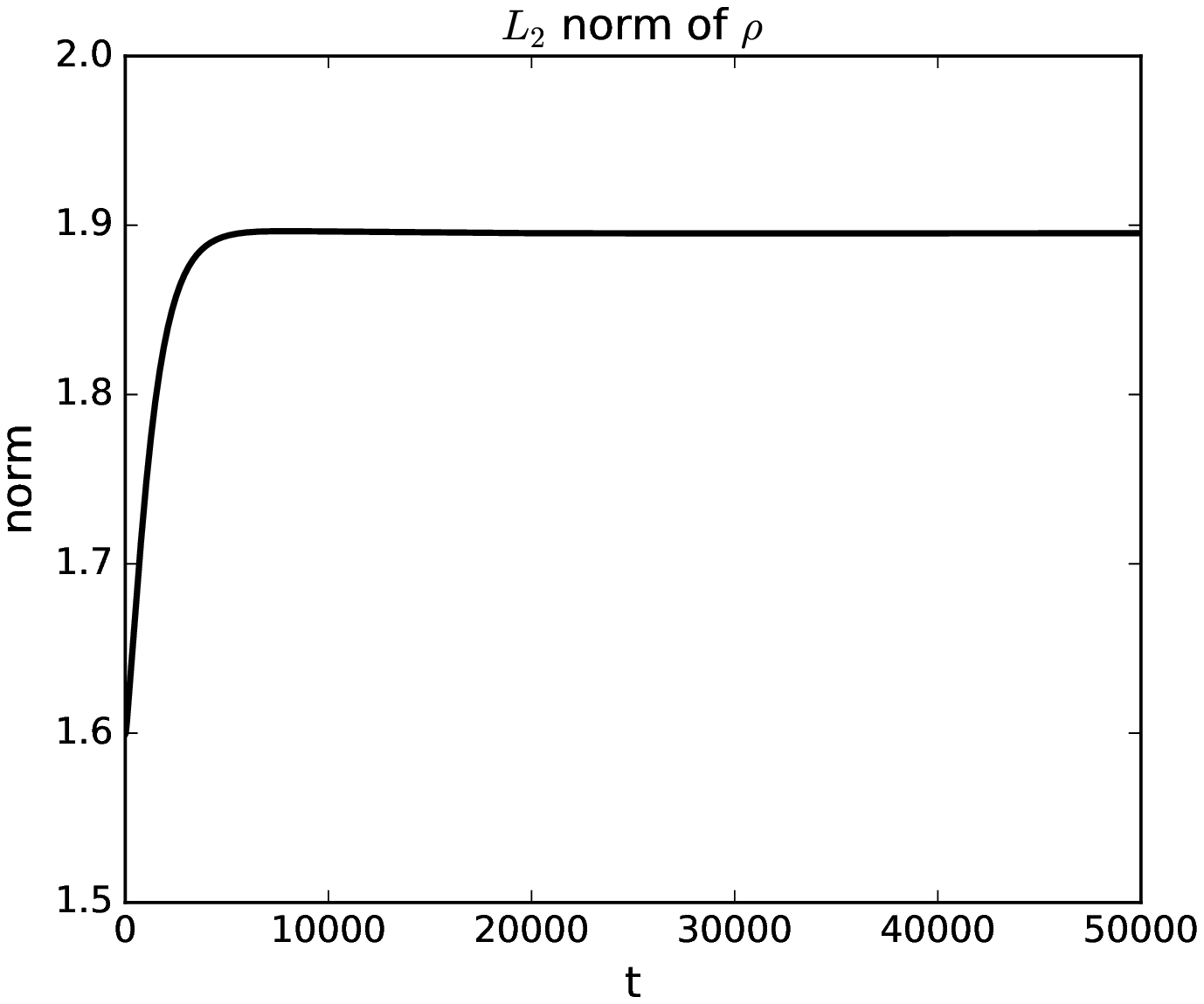}
    \vspace*{-3mm}
    \caption{\small The empirical risk (left) and the $l_2$ norm (right) of the solution 
along the gradient descent path. Here the empirical risk is used as the
free energy to define the dynamics.}
    \label{fig: 1d_empirical}
\end{figure}

\subsection{The frequency principle}
The frequency principle was suggested by Xu et al in \cite{xu2019frequency}.
The idea was that if one uses the gradient descent 
to train neural network models, then the low frequency part of the target
function is recovered before the high frequency component.
Here we examine this issue in some detail.

For this purpose it is useful to consider the dynamics in real space, i.e. 
we study the evolution of the function $f$ in~\eqref{eqn: model}.
Let $f_t$ be the function generated by $\rho_t$, we have
\begin{align}
\frac{d}{dt}f_t(x) &= \int \varphi(x,w) \partial_t\rho_t(w)dw \nonumber \\
  &= \int \varphi(x,w)\nabla\cdot\left( \rho_t(w)\int\nabla K(w,w')(\rho_t(w')-\rho^*(w')) dw'\right)dw \nonumber \\
  &= \int \varphi(x,w)\nabla\cdot\left(\rho_t(w)\int\nabla\varphi(x',w)(f_t( x')-f^*(x'))d x'\right)dw \nonumber \\
  &= -\int\left(\int\nabla\varphi(x,w)\nabla\varphi(x',w)\rho_t(w)dw\right)(f_t(x')-f^*(x'))d x'. \label{eqn: integral_eqn}
\end{align}
Therefore, the dynamics of $f_t$ is governed by an integral equation. 
This fact has important implications.
To see this more clearly, let us linearize the kernel in the above
equation around $\rho_0$, then we get
\begin{equation}
    \frac{d}{dt} f_t(x) = -\int \tilde{K}(x,x')(f_t(x')-f^*(x'))d x',
\end{equation}
where 
\begin{equation}
    \tilde{K}(x,x')=\int\nabla_w\varphi(x,w)\nabla_w\varphi(x',w)\rho_0(w)dw.
\end{equation}
By the symmetry of $x$ and $w$ in $\varphi(x,w)$, we have $\nabla_w\varphi(x,w)=-\nabla_{x}\varphi(x,w)$, and hence for $\tilde{K}$ we have
\begin{align}
\tilde{K}(x,x') &= \partial_{x}\partial_{x'}\int \varphi(x,w)\varphi(x',w)\rho_0(w)dw \nonumber \\
  &= \partial_{x}\partial_{x'} K(x,x') \nonumber \\
  &= \frac{h}{\sqrt{8\pi}}\sum\limits_{k=-\infty}^\infty \left[\frac{1}{h^2}-\frac{(x-x'+2k\pi)^2}{h^4}\right]e^{-\frac{(x-x'+2k\pi)^2}{2h^2}}, \label{eqn: tilde_K}
\end{align}
the second equality again follows from the symmetry between $x$ and $w$ in $\varphi$. 
Note that $\tilde{K}$ only depends on $x-x'$.
Hence we have the following Fourier decomposition of $\tilde{K}$:
\begin{align}
\tilde{K}(x,x') &= c_0 + \sum\limits_{k=1}^\infty b_k \sin(k(x-x'))+c_k \cos(k(x-x')) \nonumber \\
  &= c_0 +\sum\limits_{k=1}^\infty b_k(\sin(k x)\cos(k x')-\cos(k x)\sin(k x')) \nonumber \\
  &\quad + \sum\limits_{k=1}^\infty c_k(\cos(k x)\cos(k x')+\sin(k x)\sin(k x')).
\end{align}
Therefore, for any $u,v$ and $k$ we have
\begin{equation}
\int \tilde{K}(x,x')(u \sin(k x')+v \cos(k x'))d x' = 
\pi\left((c_ku+b_kv)\sin(k x)+(c_kv-b_ku)\cos(k x)\right).
\end{equation}
Consequently the eigenfunctions of $\tilde{K}$ are given by
\begin{equation}
    \left\{ u\sin(k x)+v\cos(k x): (u,v)^T \textrm{is the eigenvector of} \left[\begin{array}{cc}
        c_k  & b_k \\
        -b_k & c_k
    \end{array}\right] \right\}.
\end{equation}
The eigenvalues are $\pi\lambda_k$, where $\lambda_k = c_k + i b_k, c_k - ib_k$. 
Using \eqref{eqn: tilde_K}, we can explicitly compute the Fourier 
coefficients of $\tilde{K}$ and obtain $c_0=0$, $b_k=0$, and 
$c_k=\frac{h^2k^2}{2}e^{-h^2k^2/2}$. Hence, the eigenvalues of the operator $\tilde{K}$ are $\{\frac{\pi h^2k^2}{2}e^{-h^2k^2/2}\}$, and the eigenfunctions are 
simply the Fourier basis functions. 
{We see that the eigenvalues decrease with the frequency $k$ when $k\geq 2/h$. 
This implies that the frequency principle should hold for $k\geq 2/h$.}

Figure~\ref{fig: 1d_solution} displays the function $f_t$ at different times along the gradient 
flow path, compared to the target function $f^*$. One can see that the low frequency 
components converge faster than the high frequency components. 
The is consistent with the frequency principle. 

\begin{figure}
    \centering
    \includegraphics[width=0.35\textwidth]{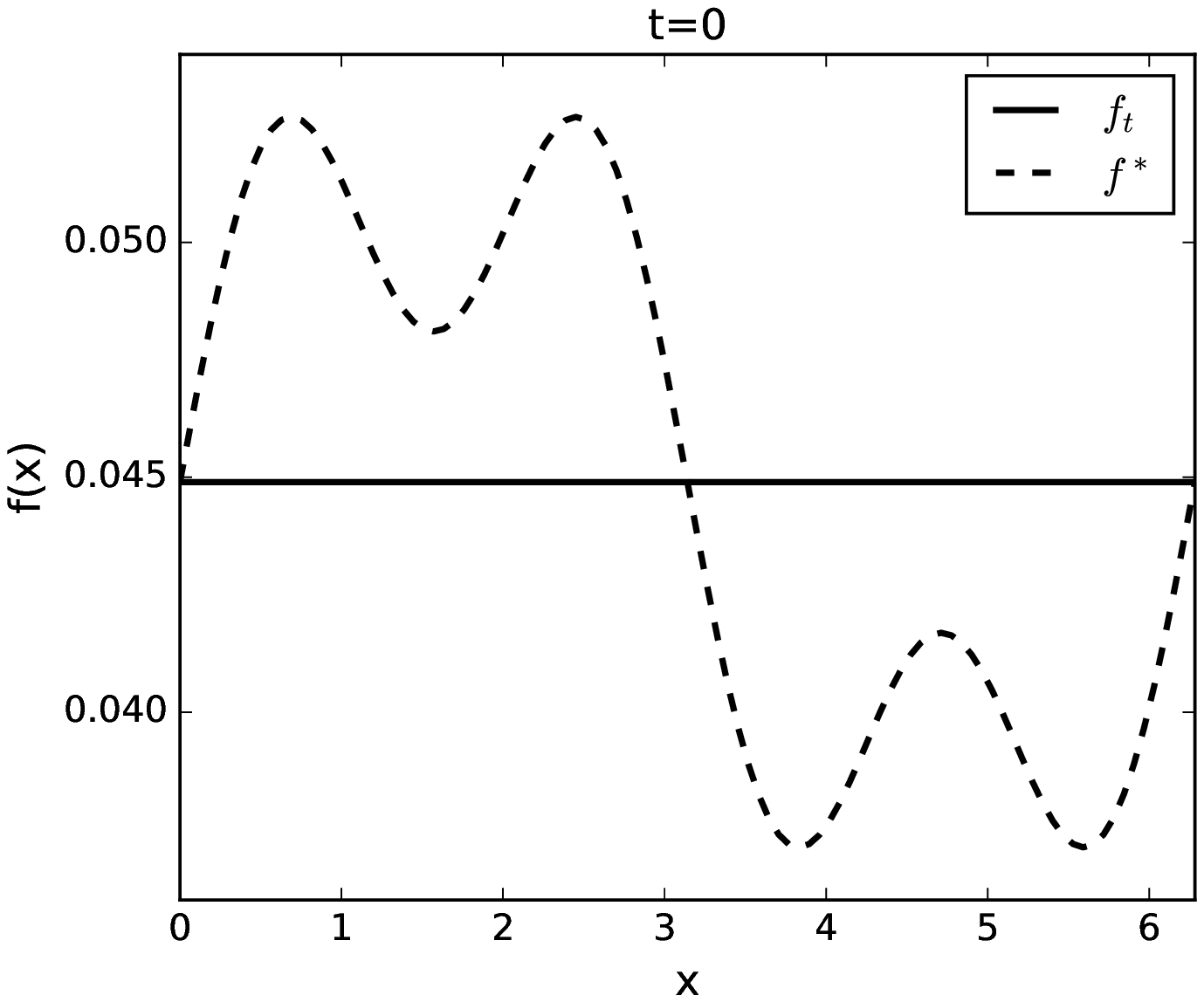}
    \includegraphics[width=0.35\textwidth]{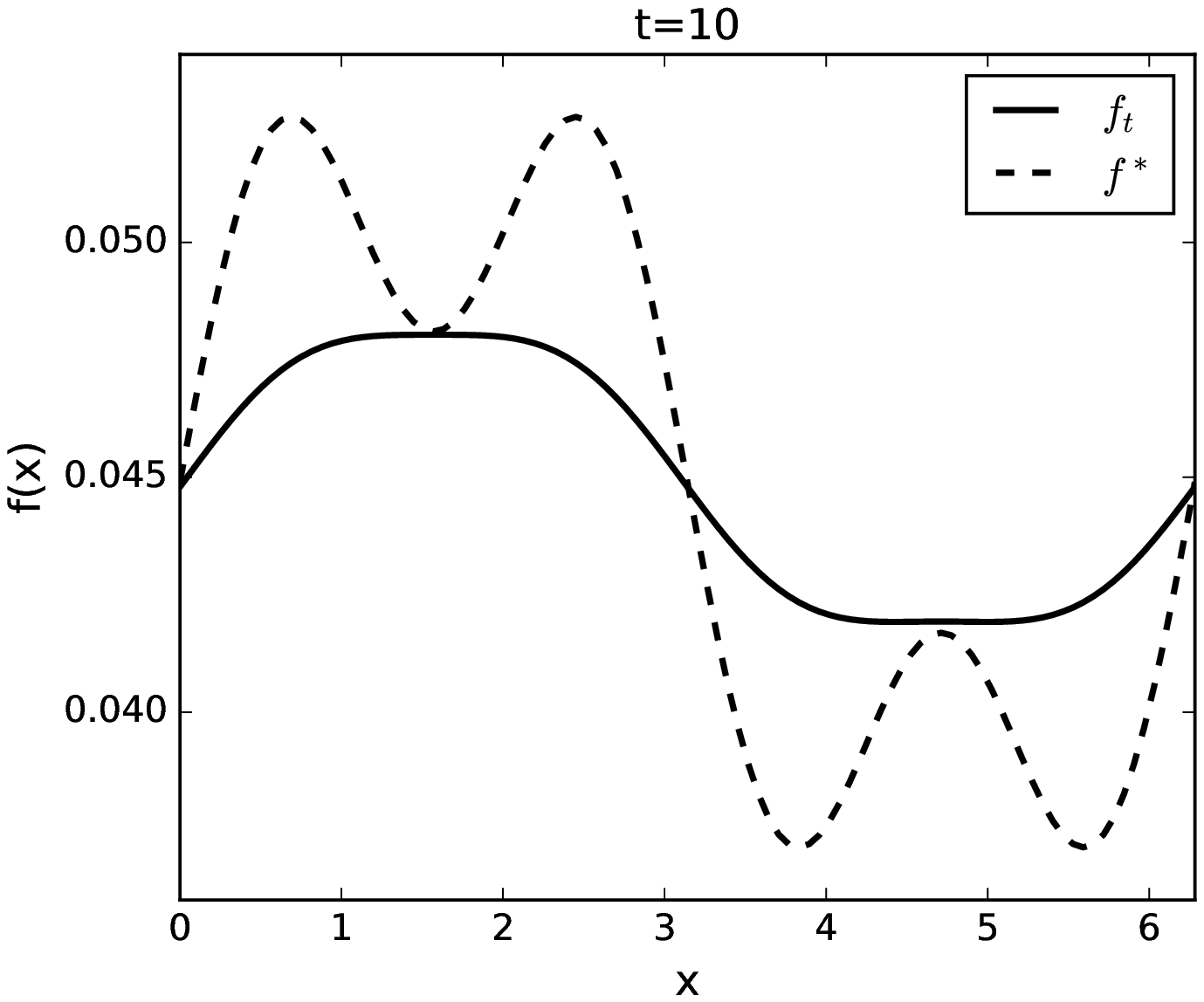}

    \includegraphics[width=0.35\textwidth]{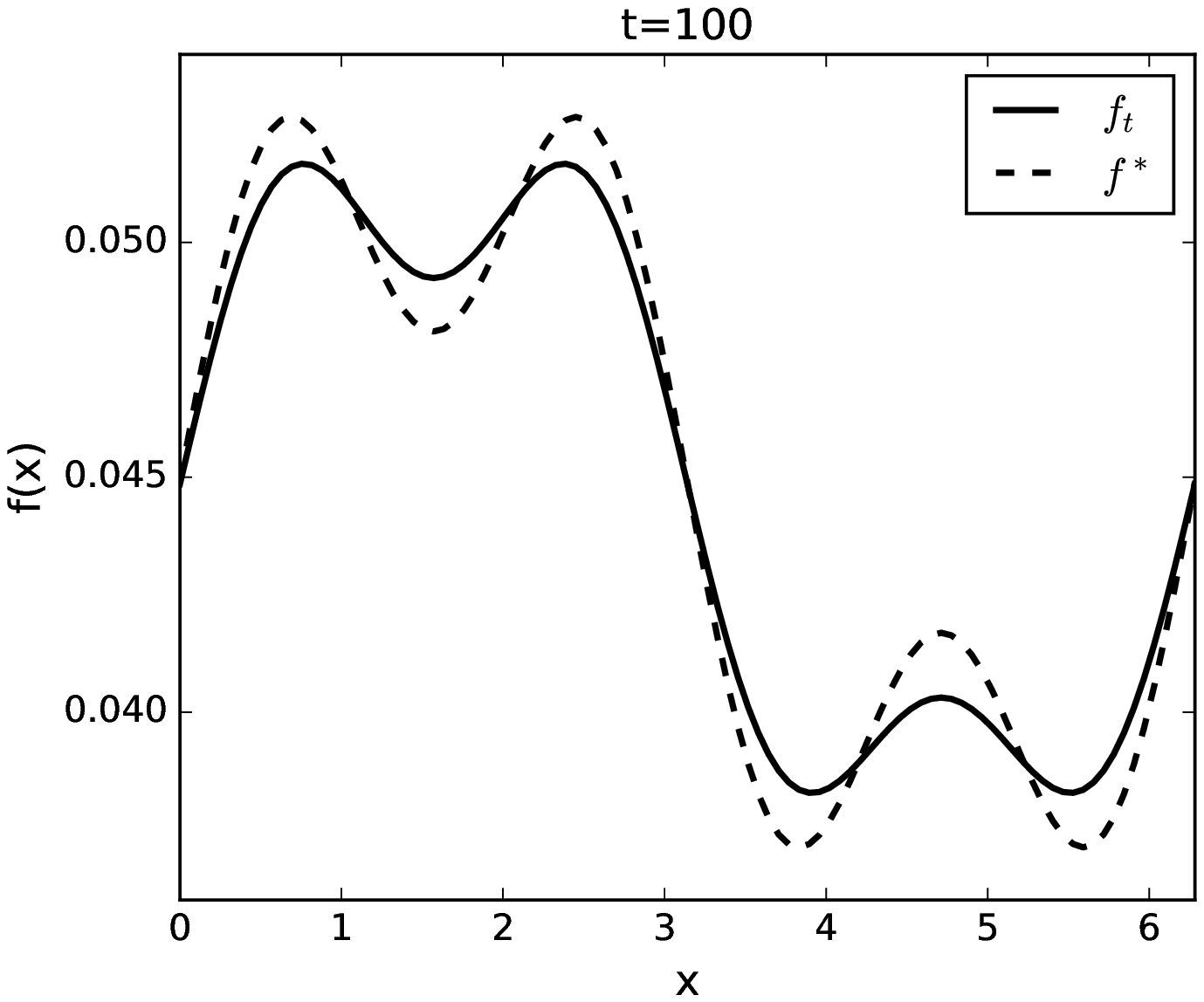}
    \includegraphics[width=0.35\textwidth]{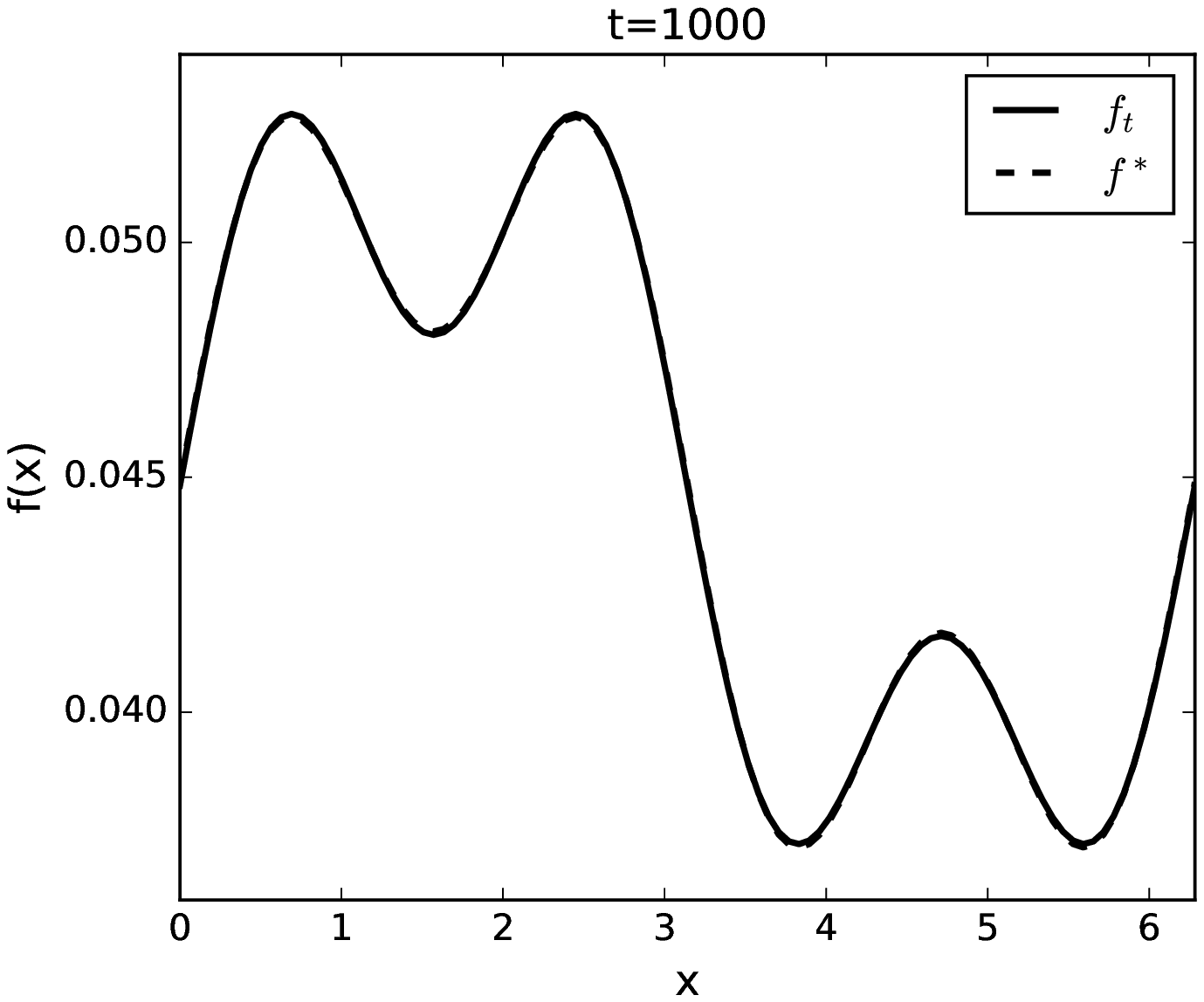}
    \vspace*{-2mm}
    \caption{\small The example that demonstrates the frequency principle. The four plots correspond to the function $f_t$ at $t=0,\ 10,\ 100,\ 1000$, compared to the target function.}
    \label{fig: 1d_solution}
\end{figure}

However, one should not expect this simple picture to literally hold in the general case.
In Figure~\ref{fig: 1d_solution_2}, we show the results for an example with $h=0.2$ and 
\begin{equation}
    \rho^*(w) = \frac{1}{2\pi}(1+0.5\times \sin(w)+0.5\times \sin(5w)).
\end{equation}

In this case, for $k\leq 2/h=5$, the eigenvalue increases with $k$. Thus, we see that the high-frequency part ($k=5$) converges faster than the low frequency part ($k=1$).
This is the consequence of the interplay between the frequency components
in the target function and the spectrum of $K$. 
When there is a concentration of energy in the intermediate range of the spectrum
for the target function, one should expect the scenario shown in Figure~\ref{fig: 1d_solution_2}
to happen.

\begin{figure}
    \centering
    \includegraphics[width=0.35\textwidth]{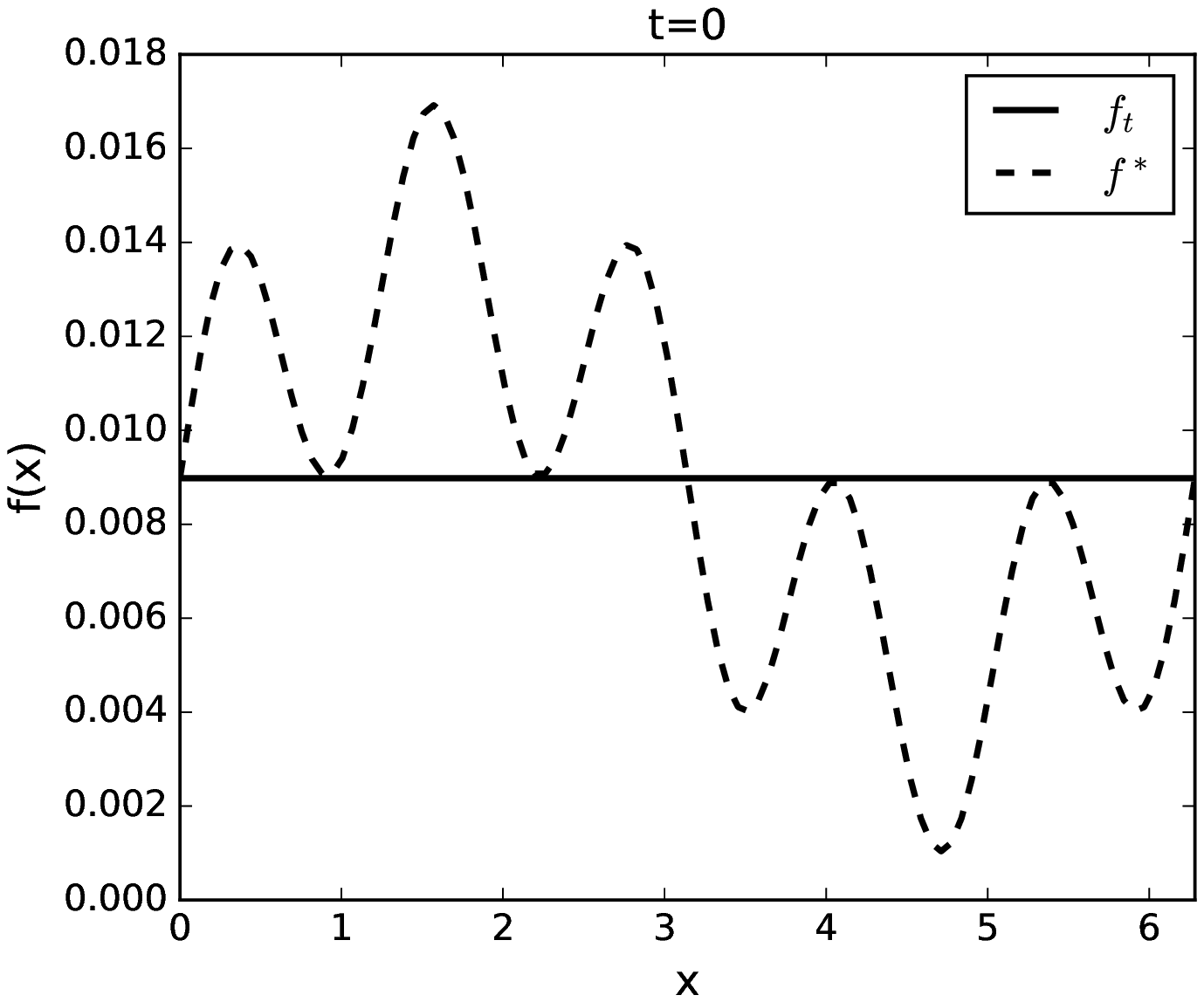}
    \includegraphics[width=0.35\textwidth]{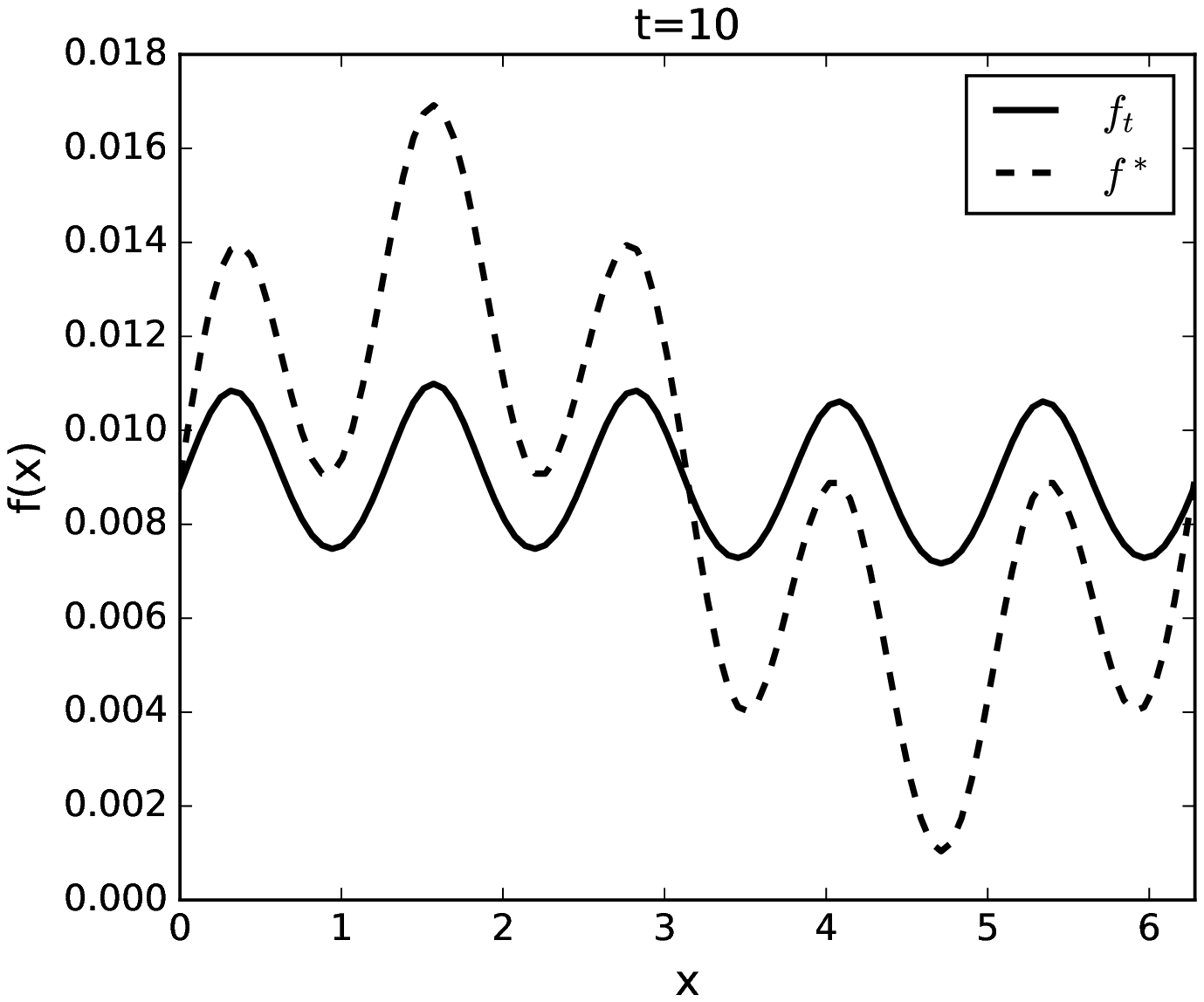}

    \includegraphics[width=0.35\textwidth]{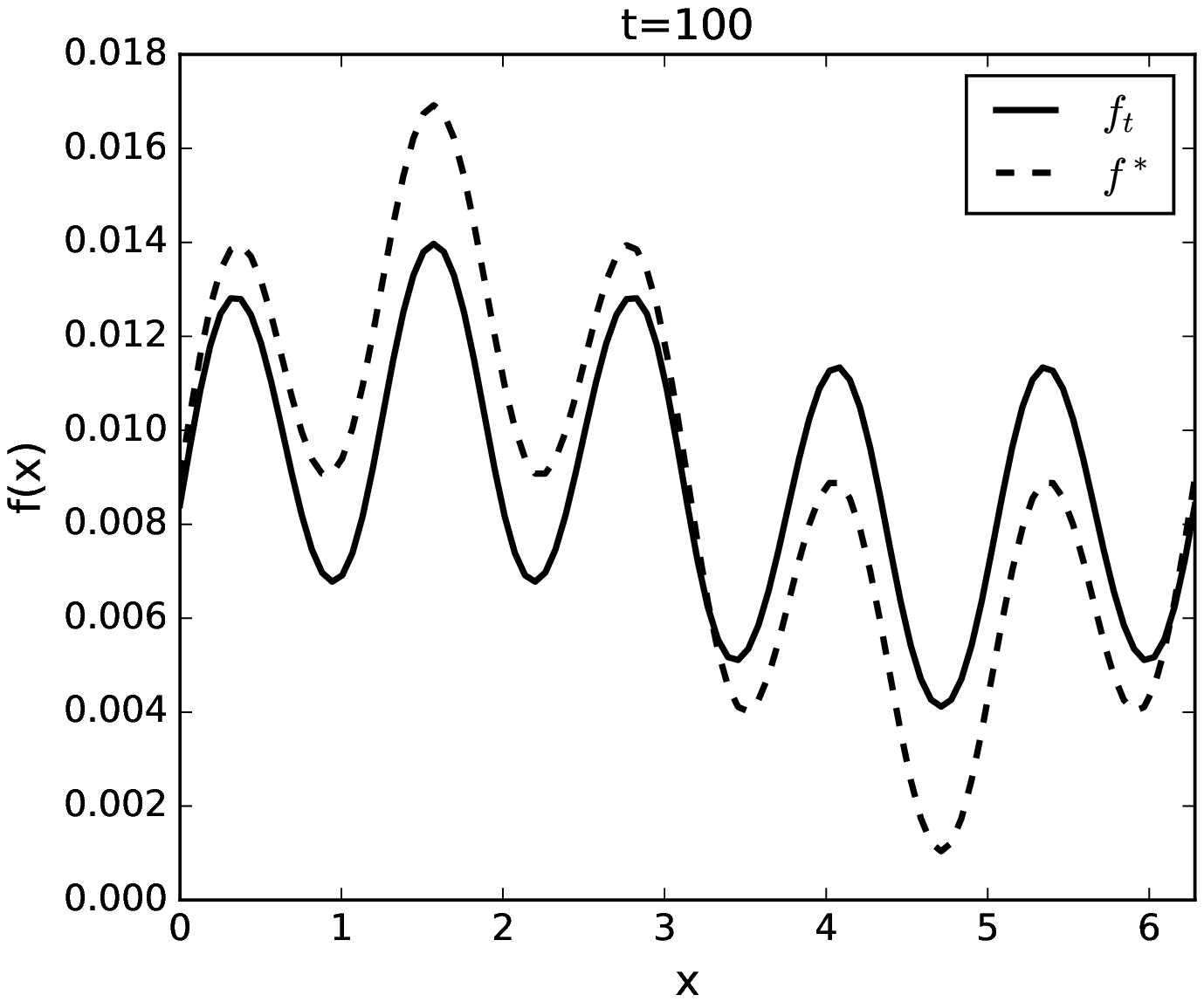}
    \includegraphics[width=0.35\textwidth]{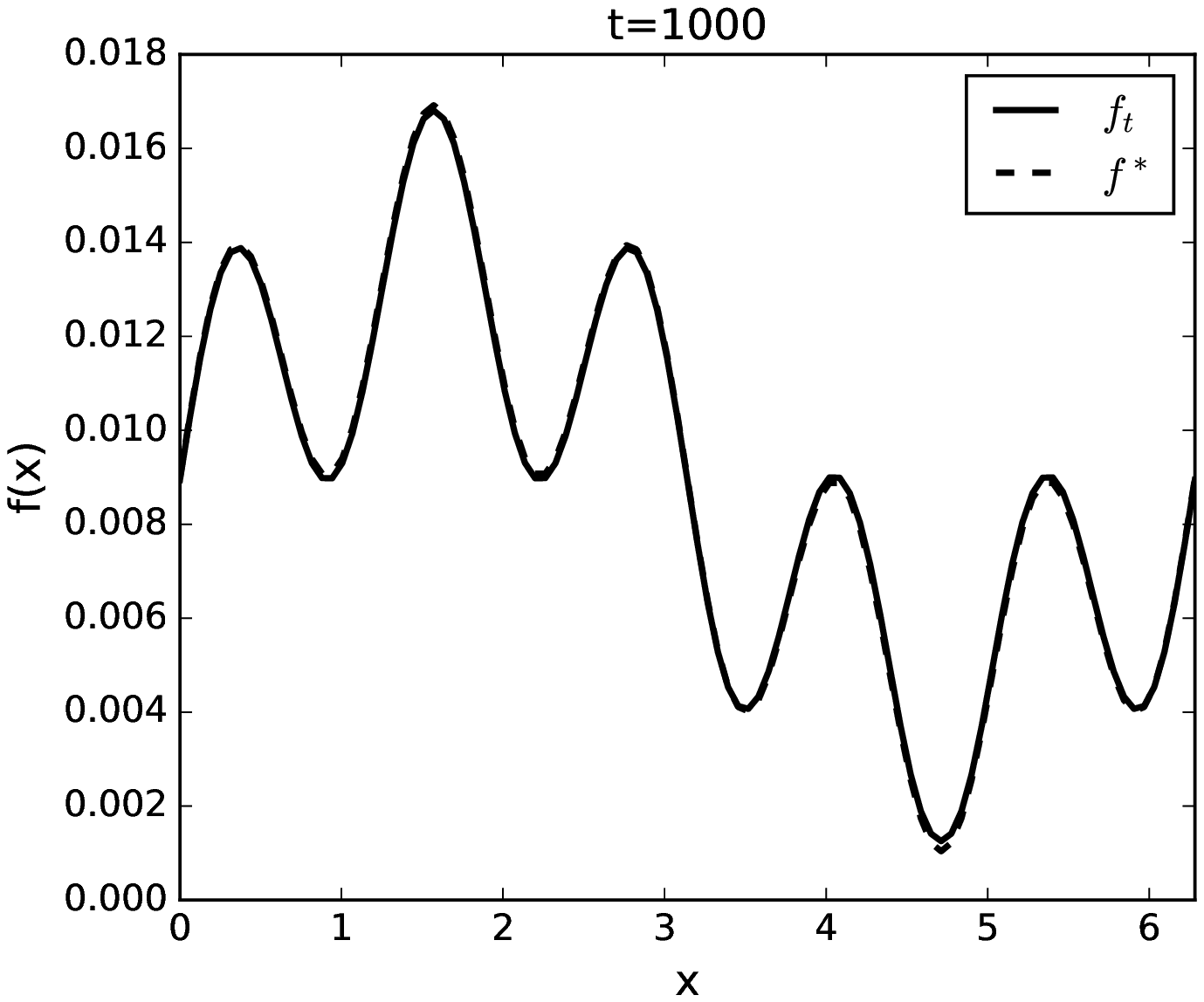}
    \vspace*{-3mm}
    \caption{\small 
The example that demonstrates when the frequency principle does not hold. The four plots correspond to the function $f_t$ at $t=0,\ 10,\ 100,\ 1000$, compared to the target function.}
    \label{fig: 1d_solution_2}
\end{figure}

\section{Discussions}

The continuous viewpoint presented here
offers a more abstract way of thinking about machine learning.
Instead of thinking about features and neurons, one focuses on the representation of
functions, the calculus of variation problem, and the continuous gradient flow. 
Features and neurons arise as objects used in special discretizations of these 
continuous problems. 

We learn at least two things from this thought process.
On one hand we can discuss machine learning without appealing
to the idea of neurons, and indeed there are plenty of algorithms and models besides
the neural network models. 
On the other hand, we also see  why neural networks, both shallow and deep (ResNet),
are inevitable choices: They are the simplest particle method discretization of the
simplest continuous gradient flow models 
(for the integral transform-based and flow-based representations respectively).

One main theme in classical numerical analysis is to 
come up with design principles for better models and better algorithms.
In that spirit, one can suggest the following set of principles for the continuous approach:
\begin{enumerate}
\item The target functions should be represented as expectations in various forms.
\item The risk functionals should be nice functionals. Even if not convex, they should share
many features of convex functionals. A good thing is that if we start from as continuous mode, 
it is likely that the discretized model will not be plagued by local minima that results of discrete effects.
\item The different gradient flows are nice flows in the sense that the relevant norms should behave well under
the flow. Here the ``relevant norm'' means  the norm associated with the particular representation (e.g. Barron norm for the 
integral transform-based representation).
\item The numerical discretization of the flow should be stable over long time intervals.
\end{enumerate}
We suspect that if one follows this set of design principles,  the resulting models and algorithms
will behave in a rather robust fashion, in contrast to current machine learning models which tend to
depend sensitively on the choice of hyper-parameters.

Some of the subtleties in current machine learning algorithms can already be appreciated just by looking at
things from a continuous viewpoint.
For example, very deep fully connected networks should cause problems since they 
do not have nice continuum limits \cite{hanin2018neural}.

\subsection*{Acknowledgement}
The work presented here is supported in part by a gift to Princeton University from iFlytek
and the ONR grant N00014-13-1-0338.
We are grateful to Jianfeng Lu, Stephan Wojtowytsch, Lexing Ying and Shuhai Zhao for helpful
discussions.

{\small 
\bibliographystyle{plain}
\bibliography{dl_ref}
}

\end{document}